\documentclass{article}
\usepackage[utf8]{inputenc}
\usepackage{graphicx}
\usepackage{amsmath}
\usepackage{amsthm}
\usepackage{bbm}
\usepackage{dsfont}
\usepackage{caption}
\usepackage[norelsize,ruled,vlined,commentsnumbered]{algorithm2e}
\usepackage{times}
\usepackage{dsfont}
\usepackage{enumerate}
\usepackage{algpseudocode}
 \usepackage{xcolor}
\usepackage{amssymb}
\usepackage{amsbsy}
\usepackage{geometry}
\usepackage{array}
\geometry{top=2cm, bottom=2cm, left=3cm , right=3cm}

\def\P{\mathbb{P}}
\def\E{\mathbb{E}}

\renewcommand{\d}{\mathrm{d}}

\newcommand{\N}{\mathbf{N}_{M}}        		
\newcommand{\hatN}{\widehat{\mathbf{N}}_{M}} 	
\newcommand{\R}{\mathbb{R}}                   		
\newcommand{\C}{\mathbf{C}}

\newcommand{\mbf}[1]{\mathbf #1}
\newcommand{\eq}{\begin{equation}}
\newcommand{\qe}{\end{equation}}

\renewcommand{\l}{\mbf L_{M}}
\newcommand{\hatl}{\widehat{\mbf L}_{M}}

\newcommand{\B}{\mathbb B}

\newcommand{\X}{\mathcal X}	
				%hidden states	
\newcommand{\Y}{\mathcal Y}				%Observation space
\renewcommand{\L}{\mathcal L^{D}}					%measure on \Y
\newcommand{\Proj}{\mathfrak P}				%subspace of L^{2}(\Y)

\newcommand{\Q}{\mathbf Q}				%transition matrix
\newcommand{\F}{\mathfrak F}	
				%set of density functions wrt \L
\newcommand{\M}{\mathbf{M}_{M}} 			%joint densities
\newcommand{\hatM}{\widehat{\mathbf{M}}_{M}}
\newcommand{\U}{\mathbf{U}}
\newcommand{\V}{\mathbf{V}}
\newcommand{\f}{\mathbf{f}}

\newcommand{\1}{\mathds{1}}

\newcommand{\Rbf}{\mathbf{R}}
     
\newcommand{\one}{\mathds{1}}
\renewcommand{\O}{\mathbf{O}_{M}}			
\newcommand{\hatO}{\widehat{\mathbf{O}}_{M}}	
\newcommand{\Ostar}{\mathbf{O}_{\star}}		
\renewcommand{\k}{{\kappa}}
\renewcommand{\B}{\mathbf{B}}

\newcommand{\D}[1]{\mathfrak{Diag}[{#1}]}        
\newcommand{\Pj}{\mathbf P_{M}}			
\newcommand{\hatPj}{\widehat{\mathbf P}_{M}}

\newcommand{\sv}{\mathrm{sv}}
\newcommand{\md}{\mathrm{md}}
\newcommand{\psg}{\mathbb{G}_{\mathrm{ps}}}
\newcommand{\gap}{\mathbb{G}}
\newcommand{\sigmaFamily}{\sigma_{K,\F^{\star}}}
\newcommand{\mixtime}{\mathbb{T}_{\mathrm{mix}}}
\newcommand{\concentration}{\mathcal{C}_{\star}(\Qstar,\delta')}
\newcommand{\borneN}{\mathbf{N}_{0}(\Q_{\star},\F^{\star},\Phi_{M},\delta,\delta')}
\newcommand{\borneNfinale}{\mathbf{N}_{1}(\Q_{\star},\F^{\star},\Phi_{M},\delta,\delta')}
\newcommand{\borneNfinalebis}{\mathbf{N}_{2}(\Q_{\star},\F^{\star},\Phi_{M},\delta,\delta')}
\newcommand{\borneNfinalebisbis}{\mathbf{N}_{3}(\Q_{\star},\F^{\star},\Phi_{M},\delta,\delta')}

\newcommand{\rmd}{\mathrm{d}}	
\newcommand{\Qstar}{\mathbf Q_{\star}}	
\newcommand{\Qhat}{\mathbf{\widehat{Q}}}
\newcommand{\Uhat}{\mathbf{\widehat{U}}}
\newcommand{\Bhat}{\mathbf{\widehat{B}}}
\newcommand{\Chat}{\mathbf{\widehat{C}}}					
\newcommand{\Fstar}{\mathrm{F}^{\star}}
\newcommand{\Fhat}{\widehat{\mathrm{F}}}

\newcommand{\fhat}{\widehat{f}}
\newcommand{\pistar}{\pi^{\star}}
\newcommand{\pistarmin}{\pistar_{\mathrm{min}}}
\newcommand{\pihat}{\widehat{\pi}}
\newcommand{\fstar}{f^{\star}}	
\newcommand{\frakFstar}{\mathfrak{F}^{\star}}

\newcommand{\dstar}{\delta^{\star}}	
\renewcommand{\L}{\mathcal L^{D}}	

\newcommand{\filtstar}[1]{\phi^{\star}_{#1}}
\newcommand{\smoothstar}[2]{\phi^{\star}_{#1|#2}}				
\newcommand{\filthat}[1]{\widehat{\phi}_{#1}}
\newcommand{\smoothhat}[2]{\widehat{\phi}_{#1|#2}}

\newcounter{hypH}
\newenvironment{hypH}{\refstepcounter{hypH}\begin{itemize}
\item[{[\bf H\arabic{hypH}]}]}{\end{itemize}}

\newcounter{hypHprime}
\newenvironment{hypHprime}{\refstepcounter{hypHprime}\begin{itemize}
\item[{[\bf H\arabic{hypHprime}']}]}{\end{itemize}}

\def\eqsp{\,}
\newcommand{\eqdef}{\ensuremath{:=}}

\newtheorem{remark}{Remark}[section]
\newtheorem{theorem}{Theorem}[section]
\newtheorem{proposition}[theorem]{Proposition}
\newtheorem{corollary}[theorem]{Corollary}
\newtheorem{lemma}[theorem]{Lemma}

\begin{document}

\author{Yohann {De Castro}\footnotemark[1]\and \'Elisabeth {Gassiat}\footnotemark[1]\and Sylvain {Le Corff}\footnotemark[1]}

\footnotetext[1]{Laboratoire de Math\'ematiques d'Orsay (CNRS UMR 8628), Universit\'e Paris-Sud 11, F-91405 Orsay Cedex, France.}

\title{Consistent estimation of the filtering and marginal smoothing distributions in nonparametric hidden Markov models}

\maketitle

\begin{abstract}
In this paper, we consider the filtering and smoothing recursions in nonparametric finite state space hidden Markov models (HMMs) when the parameters of the model are unknown and replaced by estimators. We provide an explicit and time uniform control of the  filtering and smoothing errors in total variation norm  as a function of the parameter estimation errors. We prove that the risk for the filtering and smoothing errors may be uniformly upper bounded by the risk of the estimators. It has been proved very recently that statistical inference for finite state space nonparametric HMMs is possible. We study how the recent spectral methods developed in the parametric setting may be extended to the nonparametric framework and we give explicit upper bounds for the $\mathrm{L}^{2}$-risk of the nonparametric spectral estimators. When the observation space is compact, this provides explicit rates for the filtering and smoothing errors in  total variation norm. The performance of the spectral method is assessed with  simulated data for both the estimation of the (nonparametric) conditional distribution of the observations  and the estimation of the marginal smoothing distributions.
\end{abstract}

\section{Introduction}
\label{sec:introduction}
\subsection{Context and motivations}
Hidden Markov models are popular time evolving models to depict practical situations in a variety of applications such as economics, genomics, signal processing and image analysis, ecology, environment, speech recognition, see \cite{douc:moulines:stoffer:2013} for a recent overview of HMMs. Finite state space HMMs are stochastic processes $(X_j,Y_j)_{j\geq 1}$ such that $(X_j)_{j\geq 1} $ is a Markov chain with finite state space $\X$ and $(Y_j)_{j\ge 1}$ are random variables with general state space $\Y$, independent conditionally on $(X_j)_{j\geq 1}$ and such that for all $\ell\ge 1$, the conditional distribution of $Y_\ell$ given $(X_j)_{j\ge 1}$ depends on $X_\ell$ only. The observations are $Y_{1:n} \eqdef (Y_1,\cdots,Y_n)$ and the associated states $X_{1:n} \eqdef (X_1,\cdots,X_n)$ are unobserved. The parameters of the model are the initial distribution $\pistar$ of the hidden chain, the transition matrix of the hidden chain $\Qstar$ and the conditional distribution of $Y_{1}$ given $X_{1}=x$ for all possible $x\in\X$ which are often called emission distributions. 

In many applications of finite state space HMMs (e.g. digital communication or speech recognition), it is of utmost importance to infer the sequence of hidden states.  Such inference usually involves the computation of the posterior distribution of a set of hidden states $X_{k:k'}$, $1\le k \le k'\le n$, given the observations $Y_{1:s}$, $1\le s\le n$.
When the initial distribution of the hidden chain, its transition matrix and the conditional distribution of the observations are known, this task can be efficiently done using the forward-backward algorithm described in \cite{baum:petrie:soules:weiss:1970} and \cite{rabiner:1989}. 
 In this paper, we focus on the estimation of the filtering distributions $\P(X_{k}=x|Y_{1:k})$ and marginal smoothing distributions $\P(X_{k}=x|Y_{1:n})$ for all $1<k\le n$ when the parameters of the HMM are unknown and replaced by estimators.
Indeed, it has been proved very recently that inference in finite state space nonparametric HMMs is possible, see \cite{gassiat2013finite}. 

\subsection{Contribution}

The aim of our paper is twofold. 
\begin{enumerate}[-]
\item First, we study how the parameter estimation error propagates to the error made on the estimation of filtering and smoothing distributions. Although replacing parameters by their estimators to compute posterior distributions and infer the hidden states is usual in applications, theoretical results to support this practice are very few regarding the accuracy of the estimated posterior distributions. We are only aware of \cite{evendar:kakade:mansour:2007} whose results are restricted to the filtering distribution in a parametric setting. When the parameters of the HMM are known, the forward-backward algorithm  can be extended to general state space HMMs or when the cardinality of $\X$ is too large using computational methods such as Sequential Monte Carlo methods (SMC), see \cite{delmoral:2004,doucet:defreitas:gordon:2001} for a review of these methods. In this context, the Forward Filtering Backward Smoothing \cite{kitagawa:1996,hurzeler:kunsch:1998,doucet:godsill:andrieu:2000} and  Forward Filtering Backward Simulation \cite{godsill:doucet:west:2004} algorithms  have been intensively studied, with the objective of quantifying the error made when the filtering and marginal smoothing distributions are replaced by their Monte Carlo approximations. 
These algorithms and some extensions have been analyzed theoretically recently, see for instance \cite{delmoral:doucet:singh:2010,douc:garivier:moulines:olsson:2011,dubarry:lecorff:2013,olsson:westerborn:2014}.  SMC methods may also be used in algorithms when the parameters of the HMM are unknown  to perform maximum likelihood parameter estimation, see \cite{kantas:doucet:singh:maciejowski:chopin:2014} for on-line and off-line Expectation Maximization and gradient ascent based algorithms.  Part of our analysis of the filtering and smoothing distributions is based on the same approach as in those papers and requires sharp forgetting properties of HMMs.
\item Second, we  extend spectral methods to a nonparametric setting and give explicit control of the $\mathrm{L}^2$-risk of the estimators. Such estimators may then be used in the computation of posterior distributions. In latent variable models such as HMMs, spectral methods are popular since they lead to algorithms that are not sensitive to a chosen initial estimate. Indeed, standard estimation methods for HMMs are based on the Expectation-Maximization (EM) algorithm, which faces intrinsic limitations hard to circumvent such as slow convergence and suboptimal local optima. Extending spectral methods to nonparametric HMMs is thus very useful. In particular, they may be  used to provide a preliminary estimator as starting point in a EM algorithm. They are also used in a refinement procedure proposed in \cite{decastro:gassiat:lacour:2015}. To the best of our knowledge, the spectral method has not been extended nor studied yet in the nonparametric framework.  \\
We start from the works of Anandkumar, Hsu, Kakade and Zhang on spectral methods in the parametric frame. Their papers \cite{hsu2012spectral,anandkumar2012method} present an efficient algorithm for learning parametric HMMs or more generally finitely many linear functionals of the parameters of the HMM. Thus, it is possible to use spectral methods to estimate the projections  of the emission distributions onto nested subspaces of increasing complexity.
Our work brings a new quantitative insight on the tradeoff between sampling size and approximation complexity for spectral estimators.  We provide a nonasymptotic precise upper bound of the risk for the variance term with respect to the number of observations and the complexity of the approximating subspace.  
\end{enumerate}

\subsection{Outline of the paper}
In section \ref{sec:consistency},
we provide an explicit control of the total variation filtering and smoothing errors as a function of the parameter estimation error, see Propositions \ref{prop:filtering} and \ref{prop:smoothing}.
We detail the application of these preliminary results in the parametric context, see Theorem \ref{theo:para}, and  in the nonparametric context, see Theorem \ref{theo:nonpara} where we prove that the uniform rate of convergence for the filtering and smoothing errors is driven by the $\mathrm{L}^{1}$-risk of the nonparametric estimator of the emission distributions.
In Section \ref{sec:spectral:hmm}, we explain how spectral methods can be extended to the nonparametric frame and we provide the nonasymptotic control of the variance term in Theorem \ref{thm:spectral}. This leads to the asymptotic behavior proved in Corollary \ref{cor:spectral}, which may be invoked when spectral methods are used in the computation of posterior distributions, see Corollary \ref{th:uniform:consistency} stated in the previous section. 
Finally, in Section \ref{sec:exp} we show  the performance of the spectral method with  simulated data for both the estimation of the (nonparametric) conditional distribution of the observations  and the estimation of the marginal smoothing distributions. All detailed proofs are given in the appendices.

\section{Main results}
\label{sec:consistency}
\subsection{Notations and setting}
In the sequel, it is assumed that the cardinality $K$ of $\X$ is known (for ease of notation, $\X$ is set to be $\{1,\ldots,K\}$) and that $\Y$ is a subset of $ \R^D$ for a positive integer $D$. Denote by $\mathcal P(\X)$ the space of probability measures on $\X$ and write $\mathcal L^{D}$ the Lebesgue measure on $\Y$. For all $n\ge 1$ and all $x\in\X$, the density of the conditional distribution of $Y_n$ given $X_n=x$ with respect to $\mathcal L^{D}$ is written $f_x^{\star}$. Consider the following assumptions on the hidden chain.
\begin{hypH}
\label{assum:Qstar}
\begin{enumerate}[a)]
\item \label{assum:Qstar:rank}The transition matrix $\Qstar$ has full rank.
\item \label{assum:Qstar:deltastar}$\displaystyle\dstar \eqdef \min_{1\le i,j\le K} \Qstar(i,j) >0$.
\end{enumerate}
\end{hypH}

\begin{hypH}
\label{assum:stationary}
The initial distribution $\pistar \eqdef (\pistar_{1},\ldots,\pistar_{K})$ is the stationary distribution.
\end{hypH}
\begin{remark}
Note that under {\bf [H\ref{assum:Qstar}]-\ref{assum:Qstar:deltastar})} and {\bf [H\ref{assum:stationary}]}, for all $k\in\X$, $\pistar_{k}\geq\delta^{\star}>0$.
\end{remark}
\begin{remark}
Assumptions {\bf [H\ref{assum:Qstar}]-\ref{assum:Qstar:rank})} and {\bf [H\ref{assum:stationary}]} appear in spectral methods, see for instance \cite{anandkumar2012method,hsu2012spectral}, and in identifiability issues, see for instance \cite{alexandrovich2014nonparametric,allman2009identifiability,gassiat2013finite}.
\end{remark}
For all $y\in\Y$, define $c_{\star}(y)$ by
\begin{equation}
\label{eq:def:cstar}
c_{\star}(y) \eqdef \min_{x\in\X} \sum_{x'\in\X}\Qstar(x,x')\fstar_{x'}(y)\eqsp.
\end{equation}
For all $y_{1:n}\in\Y^n$, the filtering distributions $\filtstar{k}(\cdot,y_{1:k})$ and marginal smoothing distributions $\smoothstar{k}{n}(\cdot,y_{1:n})$ may be computed explicitly for all $1\le k\le n$ using the forward-backward algorithm of \cite{baum:petrie:soules:weiss:1970}. In the forward pass, the filtering distributions $\filtstar{k}$ are updated recursively using, for all $x\in\X$,
\begin{equation}
\label{eq:recursion-fil}
\filtstar{1}(x,y_1) \eqdef  \frac{\pistar(x)\fstar_x(y_1)}{\sum_{x'\in\X}\pistar(x')\fstar_{x'}(y_1)} \;\;\mbox{and}\;\;\filtstar{k}(x,y_{1:k}) \eqdef \frac{\sum_{x'\in\X}\Qstar(x',x)\fstar_x(y_k)\filtstar{k-1}(x',y_{1:k-1})}{\sum_{x',x''\in\X}\Qstar(x',x'')\fstar_{x''}(y_k)\filtstar{k-1}(x',y_{1:k-1})}\eqsp.
\end{equation}
Note that for all $1\le k \le n$, $\filtstar{k}(x,Y_{1:k}) = \P(X_{k}=x|Y_{1:k})$. In the backward pass, the marginal smoothing distributions may be updated recursively using, for all $x\in\X$,
\begin{equation}
\label{eq:recursion-smooth}
\smoothstar{n}{n}(x,y_{1:n}) \eqdef  \filtstar{n}(x,y_{1:n}) \quad\mbox{and}\quad \smoothstar{k}{n}(x,y_{1:n}) \eqdef \sum_{x'\in \X} B^{\star}_{\filtstar{k}(\cdot,y_{1:k})}(x',x)\smoothstar{k+1}{n}(x',y_{1:n})\eqsp,
\end{equation}
where, for all $u,v\in\X$ and all $1\le k\le n$, 
\[
B^{\star}_{\filtstar{k}(\cdot,y_{1:k})}(u,v) \eqdef \frac{\Qstar(v,u)\filtstar{k}(v,y_{1:k})}{\sum_{z\in \X}\Qstar(z,u)\filtstar{k}(z,y_{1:k})}\eqsp.
\]
Note that for all $1\le k \le n$, $\filtstar{k|n}(x,Y_{1:n}) = \P(X_{k}=x|Y_{1:n})$.

\subsection{Preliminary results}
In this paper, the parameters $\pistar$, $\Qstar$ and $\fstar$ are unknown. Then, the recursive equations \eqref{eq:recursion-fil} and \eqref{eq:recursion-smooth} may  be applied replacing $\pistar$, $\Qstar$ and $\fstar$ by some estimators $\pihat$, $\Qhat$ and $\fhat$ to obtain approximations of the filtering and smoothing distributions. Using forgetting properties of the hidden chain, we are able  to obtain an  upper bound of the filtering errors and of  the marginal smoothing errors by terms involving only the estimation errors of $\pistar$, $\Qstar$ and $\fstar$. These upper bounds are given in propositions~\ref{prop:filtering} and~\ref{prop:smoothing}. Their proofs are postponed to Appendix~\ref{sec:filtResults} and~\ref{sec:smoothResults}. Note that the upper bounds are given for any possible values $y_{1:k}$, $k\geq 1$ and may be applied to the set of observations for which filtering and smoothing distributions are estimated, whatever the set of observations used to estimate $\pistar$, $\Qstar$ and $\fstar$. Let $\|\cdot\|_{\mathrm{tv}}$ be the total variation norm,  $\|\cdot\|_{2}$ the euclidian norm and  $\|\cdot\|_{F}$ the Frobenius norm. For all $1\le k \le n$, denote by $\filthat{k}$ and $\filthat{k|n}$ the approximations of $\filtstar{k}$ and $\filtstar{k|n}$ obtained by replacing $\pistar$, $\Qstar$ and $\fstar$ by the estimators $\pihat$, $\Qhat$ and $\fhat$ in \eqref{eq:recursion-fil} and \eqref{eq:recursion-smooth}.
\begin{proposition}
\label{prop:filtering}
Assume {\bf [H\ref{assum:Qstar}]-\ref{assum:Qstar:deltastar})} and {\bf [H\ref{assum:stationary}]} hold. Then, for all $k\geq 1$ and all $y_{1:k}\in \Y^k$,
\begin{multline*}
\|\filtstar{k}(\cdot,y_{1:k}) -\filthat{k}(\cdot,y_{1:k})\|_{\mathrm{tv}}\le C_{\star}\left(\rho_{\star}^{k-1}\left\|\pistar-\pihat\right\|_{2}/\delta^{\star} + \|\Qstar-\Qhat\|_{F}/(\delta^{\star}(1-\rho_{\star}))\right. \\
+ \left.\sum_{\ell=1}^{k}\rho_{\star}^{k-\ell}c_{\star}^{-1}(y_{\ell})\max_{x\in\X}\left|f^{\star}_{x}(y_{\ell})-\hat{f}_{x}(y_{\ell})\right|\right)\eqsp,
\end{multline*}
where 
$\rho_{\star} \eqdef 1-\dstar/(1-\dstar)\eqsp$ and $C_{\star} \eqdef 4(1-\dstar)/\dstar$.
\end{proposition}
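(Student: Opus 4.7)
The plan is to apply a standard error-propagation / telescoping argument for filtering recursions, combined with a Dobrushin-type forgetting property of the Bayesian filter induced by the lower bound $\dstar$ on the transition matrix.

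First, I would rewrite the recursion \eqref{eq:recursion-fil} abstractly as $\filtstar{k}(\cdot, y_{1:k}) = \Phi_{y_k}^{\Qstar, \fstar}(\filtstar{k-1}(\cdot,y_{1:k-1}))$ and $\filthat{k}(\cdot, y_{1:k}) = \Phi_{y_k}^{\Qhat, \fhat}(\filthat{k-1}(\cdot,y_{1:k-1}))$, where the nonlinear operator $\Phi_y^{\Q,f}\colon \mathcal P(\X)\to\mathcal P(\X)$ is the prediction-update map
\[
\Phi_y^{\Q,f}(\mu)(x) = \frac{\sum_{x'}\Q(x',x)f_x(y)\mu(x')}{\sum_{x',x''}\Q(x',x'')f_{x''}(y)\mu(x')}\eqsp.
\]
Then I would telescope the global error: writing $\Phi^{\star}_{1:k}\eqdef\Phi_{y_k}^{\Qstar,\fstar}\circ\cdots\circ\Phi_{y_1}^{\Qstar,\fstar}$ and denoting by $\widehat{\Phi}_{1:k}$ the analogous composition with the estimated parameters, insert the hybrid trajectories in which only the first $\ell$ steps use $(\Qhat,\fhat)$ and the remaining steps use $(\Qstar,\fstar)$. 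This yields
\[
\filtstar{k}-\filthat{k} = \Phi^{\star}_{2:k}(\filtstar{1}) - \Phi^{\star}_{2:k}(\filthat{1}) + \sum_{\ell=2}^{k}\left[\Phi^{\star}_{\ell+1:k}(\Phi_{y_\ell}^{\Qstar,\fstar}(\filthat{\ell-1})) - \Phi^{\star}_{\ell+1:k}(\Phi_{y_\ell}^{\Qhat,\fhat}(\filthat{\ell-1}))\right]\eqsp,
\]
so that the triangle inequality in total variation reduces the problem to bounding (i) the iterated contraction of the true filter $\Phi^{\star}_{\ell+1:k}$ and (ii) the local one-step perturbation $\|\Phi_{y_\ell}^{\Qstar,\fstar}(\mu)-\Phi_{y_\ell}^{\Qhat,\fhat}(\mu)\|_{\mathrm{tv}}$.

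For (i), the forgetting/contraction step, I would rely on the classical Dobrushin-type argument for the Bayes filter: under \textbf{[H\ref{assum:Qstar}]-\ref{assum:Qstar:deltastar})}, every column of $\Qstar$ is uniformly lower bounded by $\dstar$, which yields a minorization giving the total-variation contraction
\[
\|\Phi_{y}^{\Qstar,\fstar}(\mu_1) - \Phi_{y}^{\Qstar,\fstar}(\mu_2)\|_{\mathrm{tv}} \le \rho_\star \|\mu_1-\mu_2\|_{\mathrm{tv}}
\]
with $\rho_\star = 1-\dstar/(1-\dstar)$, independently of $y$. Iterating provides the factor $\rho_\star^{k-\ell}$ in front of each term. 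For (ii), I would compute explicit Lipschitz-type bounds on the parameter-to-filter map: decomposing numerator and denominator of $\Phi_y^{\Q,f}(\mu)$ and using that the denominator is lower bounded by $\dstar c_\star(y)$ (for the true parameters) and that the smallest conditional expectation of $\fstar$ under $\Qstar$ at $y$ equals $c_\star(y)$, one obtains that each one-step perturbation is bounded by a constant multiple of $\|\Qstar-\Qhat\|_F/\dstar + c_\star^{-1}(y_\ell)\max_x|\fstar_x(y_\ell)-\fhat_x(y_\ell)|$. The initial term is handled similarly, using that the update from $\pistar$ to $\filtstar{1}$ is $1/\dstar$-Lipschitz in the prior (since $\pistar_x\ge\dstar$).

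Summing the geometric series $\sum_{\ell=2}^{k}\rho_\star^{k-\ell}$ yields the $1/(1-\rho_\star)$ factor in front of $\|\Qstar-\Qhat\|_F/\dstar$, while keeping $y_\ell$-dependent weights $c_\star^{-1}(y_\ell)\max_x|\fstar_x(y_\ell)-\fhat_x(y_\ell)|$ produces the sum in the statement. The principal technical obstacle is the derivation of the exact constants $C_\star=4(1-\dstar)/\dstar$ and the contraction rate $\rho_\star=1-\dstar/(1-\dstar)$: the Dobrushin coefficient of the Bayesian update after a likelihood reweighting must be controlled uniformly in the observation, which requires a careful coupling/minorization computation that tracks the worst-case likelihood ratio via the quantity $c_\star(y)$ in the denominator bound. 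Everything else reduces to routine bookkeeping of the constants through the telescoping sum.
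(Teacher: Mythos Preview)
Your telescoping decomposition is exactly the one the paper uses, and your step (ii) on the local one-step perturbation matches the paper's computation. The gap is in step (i), the contraction. You assert the one-step filter map satisfies
\[
\|\Phi_y^{\Qstar,\fstar}(\mu_1) - \Phi_y^{\Qstar,\fstar}(\mu_2)\|_{\mathrm{tv}} \le \rho_\star\|\mu_1-\mu_2\|_{\mathrm{tv}}
\]
uniformly in $y$, from the minorization of $\Qstar$ alone. This is not a standard Dobrushin bound: the prediction $\mu\mapsto\mu\Qstar$ does contract in total variation, but the subsequent Bayes correction (likelihood reweighting and renormalisation) is not a TV contraction --- it can expand total variation by an observation-dependent factor --- so the composition is not obviously a TV contraction with the specific rate $\rho_\star$, uniformly in $y$. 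You flag this as ``the principal technical obstacle'' but give no mechanism to resolve it; a direct coupling/minorization argument on $\Phi_y$ will not deliver $\rho_\star$.

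The paper circumvents this by rewriting the iterated nonlinear filter as a \emph{linear} Markov chain. Introducing the backward functions $\beta^\star_{\ell|k}$ and the forward smoothing kernels $\Fstar_{j|k}$ (which depend on $y_{\ell+1:k}$), one has the identity $\Phi^\star_{\ell+1:k}(\nu) = \nu_{\ell|k}\Fstar_{\ell+1|k}\cdots\Fstar_{k|k}$ with $\nu_{\ell|k}\propto\beta^\star_{\ell|k}\,\nu$. Each $\Fstar_{j|k}$ is a genuine Markov kernel and satisfies the minorization $\Fstar_{j|k}(x,\cdot)\ge\bigl(\dstar/(1-\dstar)\bigr)\lambda_j(\cdot)$ for a probability measure $\lambda_j$, so its Dobrushin coefficient is at most $\rho_\star$; iterating gives the factor $\rho_\star^{k-\ell}$. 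The price of this reformulation is the reweighting by $\beta^\star_{\ell|k}$ when passing from $\nu$ to $\nu_{\ell|k}$, which costs an extra multiplicative factor $2(1-\dstar)/\dstar$ (coming from $\|\beta^\star_{\ell|k}\|_\infty/\inf_x\beta^\star_{\ell|k}(x)\le(1-\dstar)/\dstar$). Together with the factor $2$ from the local perturbation bound, this is exactly what produces $C_\star=4(1-\dstar)/\dstar$. Without this backward/smoothing reformulation your outline has no way to obtain the stated rate and constant.
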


\begin{proposition}
\label{prop:smoothing}
Assume {\bf [H\ref{assum:Qstar}]-\ref{assum:Qstar:deltastar})} and  {\bf [H\ref{assum:stationary}]} hold. 
Then, for all $1\le k \le n$ and all $y_{1:n}\in\Y^n$,
\begin{multline*}
\|\filtstar{k|n}(\cdot,y_{1:n}) -\filthat{k|n}(\cdot,y_{1:n})\|_{\mathrm{tv}}\le C_{\star}\left(\rho_{\star}^{k-1}\|\pistar-\pihat\|_{2}/\delta^{\star} + [1/(1-\rho_{\star})+ 1/(1-\widehat{\rho})]\|\Qstar-\Qhat\|_{F}/\delta^{\star}\right.\\
\left.+ \sum_{\ell=1}^n(\widehat{\rho}\vee \rho_{\star})^{\,|\ell-k|}c_{\star}^{-1}(y_{\ell})\max_{x\in\X}\left|f^{\star}_{x}(y_{\ell})-\hat{f}_{x}(y_{\ell})\right|\right)\eqsp,
\end{multline*}
where 
$\widehat{\delta}\eqdef  \min_{x,x'} \Qhat(x,x') $ and $\widehat{\rho} \eqdef 1-\widehat{\delta}/(1-\widehat{\delta})$.
\end{proposition}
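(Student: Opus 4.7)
Plan: The proof follows the same template as that of Proposition~\ref{prop:filtering} but is driven by the backward kernel recursion~\eqref{eq:recursion-smooth} instead of the forward one. Setting $\epsilon_k\eqdef\|\smoothstar{k}{n}(\cdot,y_{1:n})-\smoothhat{k}{n}(\cdot,y_{1:n})\|_{\mathrm{tv}}$, the cornerstone is the one-step telescoping identity
\[
\smoothstar{k}{n}-\smoothhat{k}{n}=\bigl(\smoothstar{k+1}{n}-\smoothhat{k+1}{n}\bigr)B^{\star}_{\filtstar{k}}+\smoothhat{k+1}{n}\bigl(B^{\star}_{\filtstar{k}}-\widehat{B}_{\filthat{k}}\bigr),
\]
where $\widehat{B}_{\filthat{k}}$ denotes the analogue of $B^{\star}_{\filtstar{k}}$ built from $\Qhat$ and $\filthat{k}$; a companion identity obtained by swapping $B^{\star}_{\filtstar{k}}$ and $\widehat{B}_{\filthat{k}}$ between the two summands is what eventually produces the symmetric factor $\rho_{\star}\vee\widehat{\rho}$ in the statement.

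The first key step is a Doeblin minorization for the backward kernel: under {\bf [H\ref{assum:Qstar}]-\ref{assum:Qstar:deltastar})}, for every probability vector $\phi$ on $\X$ the explicit formula for $B^{\star}_\phi$ combined with the elementary denominator bound $\sum_{z}\Qstar(z,u)\phi(z)\le 1-\dstar$ gives $B^{\star}_\phi(u,v)\ge[\dstar/(1-\dstar)]\,\phi(v)$ uniformly in~$u$. This minorization yields the Dobrushin contraction $\|\nu B^{\star}_\phi\|_{\mathrm{tv}}\le\rho_{\star}\|\nu\|_{\mathrm{tv}}$ for every signed measure $\nu$ with $\nu(\X)=0$, with the analogous contraction of rate $\widehat{\rho}$ for $\widehat{B}_{\widehat\phi}$.

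Iterating the telescoping identity from $k$ up to $n$ and applying the contraction at each step gives
\[
\epsilon_k\le(\rho_{\star}\vee\widehat\rho)^{n-k}\,\|\filtstar{n}-\filthat{n}\|_{\mathrm{tv}}+\sum_{j=k}^{n-1}(\rho_{\star}\vee\widehat\rho)^{\,j-k}\,\Delta_j,\qquad\Delta_j\eqdef\max_{u\in\X}\|B^{\star}_{\filtstar{j}}(u,\cdot)-\widehat{B}_{\filthat{j}}(u,\cdot)\|_{\mathrm{tv}}.
\]
I split $\Delta_j\le\Delta_j^{(1)}+\Delta_j^{(2)}$ by inserting the intermediate kernel $B^{\star}_{\filthat{j}}$: routine ratio-difference computations using the denominator lower bound $\sum_z\Qstar(z,u)\phi(z)\ge\dstar$ give $\Delta_j^{(1)}\le(2/\dstar)\|\filtstar{j}-\filthat{j}\|_{\mathrm{tv}}$ and $\Delta_j^{(2)}\le(2/\dstar)\|\Qstar-\Qhat\|_F$, up to absolute constants.

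Finally, I insert Proposition~\ref{prop:filtering} both for the initial error $\|\filtstar{n}-\filthat{n}\|_{\mathrm{tv}}$ and for each $\|\filtstar{j}-\filthat{j}\|_{\mathrm{tv}}$ appearing in $\Delta_j^{(1)}$, then exchange the order of summation. The $\pistar$-contributions collapse to $\rho_{\star}^{k-1}\|\pistar-\pihat\|_2/\dstar$ because $\rho_{\star}^{j-1}(\rho_{\star}\vee\widehat\rho)^{j-k}=\rho_{\star}^{k-1}[\rho_{\star}(\rho_{\star}\vee\widehat\rho)]^{j-k}$ is summable in $j\ge k$; the $\Qstar$-contributions, after rerunning the same recursion with $\widehat\rho$ in place of $\rho_{\star}$ to symmetrize, fit into the stated $[1/(1-\rho_{\star})+1/(1-\widehat\rho)]\|\Qstar-\Qhat\|_F/\dstar$; and for each observation index $\ell\in\{1,\ldots,n\}$, the emission error $c_{\star}^{-1}(y_\ell)\max_x|f^{\star}_x(y_\ell)-\fhat_x(y_\ell)|$ receives its forward weight $\rho_{\star}^{k-\ell}$ from Proposition~\ref{prop:filtering} when $\ell\le k$ and its backward weight $(\rho_{\star}\vee\widehat\rho)^{\ell-k}$ from the backward-kernel contraction when $\ell\ge k$, merging into the unified $(\rho_{\star}\vee\widehat\rho)^{|\ell-k|}$. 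The main obstacle is precisely this last bookkeeping step: the double geometric sum must be reorganised so that observations far from time~$k$ contribute a weight that genuinely decays exponentially in $|\ell-k|$, rather than only in $\max(j-k,k-\ell)$ as a naive product bound would yield.
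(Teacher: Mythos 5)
Your overall architecture --- telescoping the backward recursion, a Doeblin minorization for the backward kernels, and a split of the one-step kernel discrepancy --- is sound and would prove \emph{a} time-uniform bound of the right qualitative shape, but it cannot deliver the inequality as stated, and the failure is exactly at the bookkeeping step you flag as the main obstacle. The problem is that your $\Delta_j^{(1)}\lesssim\|\filtstar{j}-\filthat{j}\|_{\mathrm{tv}}/\dstar$ re-injects the \emph{accumulated} filtering error at every backward step $j\ge k$, and you then expand each of these via Proposition~\ref{prop:filtering}. The resulting double geometric sum does not ``collapse'': for the initial-distribution term, $\sum_{j\geq k}(\rho_{\star}\vee\widehat\rho)^{j-k}\rho_{\star}^{j-1}=\rho_{\star}^{k-1}/(1-\rho_{\star}(\rho_{\star}\vee\widehat\rho))$, and the extra factor $1/(1-\rho_{\star}(\rho_{\star}\vee\widehat\rho))$ is not an absolute constant --- it diverges like $1/\dstar$ as $\dstar\to0$; the same factor multiplies every emission weight $\rho_{\star}^{k-\ell}$ for $\ell\le k$, on top of a prefactor of order $C_{\star}^{2}/\dstar$ rather than $C_{\star}$. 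Worst of all, the transition-matrix term coming from $\Delta_j^{(1)}$ already carries $\|\Qstar-\Qhat\|_{F}/(\dstar(1-\rho_{\star}))$ from Proposition~\ref{prop:filtering}, and summing it against $(\rho_{\star}\vee\widehat\rho)^{j-k}$ produces a \emph{product} $1/\big((1-\rho_{\star})(1-\rho_{\star}\vee\widehat\rho)\big)$, which is strictly stronger a blow-up than the stated \emph{sum} $1/(1-\rho_{\star})+1/(1-\widehat\rho)$. So the route you propose proves a weaker inequality with constants that degrade by additional powers of $1/\dstar$; it does not prove the proposition.

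The paper avoids this double-counting by telescoping a different quantity: the joint smoothing expectation $\smoothstar{1:n}{n}(h)=\smoothstar{1:\ell}{\ell}(L^{\star}_{\ell,n}(\cdot,h))/\smoothstar{1:\ell}{\ell}(L^{\star}_{\ell,n}(\cdot,\1))$ is telescoped over the observation index $\ell=1,\ldots,n$, with the approximate law $\smoothhat{1:\ell-1}{\ell-1}$ (equivalently $\filthat{\ell-1}$) used as the common reference measure in both terms of the $\ell$-th increment. Consequently each one-step error $\delta_{\ell,n}$ compares two measures $\mu_{\ell}$ and $\widehat\mu_{\ell}$ built from the \emph{same} $\filthat{\ell-1}$ and differing only through one application of $\Qstar(\cdot,\cdot)\fstar_{\cdot}(y_{\ell})$ versus $\Qhat(\cdot,\cdot)\fhat_{\cdot}(y_{\ell})$; the accumulated filtering error never enters, each $y_{\ell}$ contributes exactly once, and the single geometric weight $\rho_{\star}^{k-\ell}$ (for $\ell\le k$, via the forward smoothing kernels) or $\widehat\rho^{\,\ell-k}$ (for $\ell>k$, via the backward kernels built from $\Qhat$) attaches directly to that one local discrepancy. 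That is what yields the clean constant $C_{\star}$, the additive $[1/(1-\rho_{\star})+1/(1-\widehat\rho)]$, and the single sum in $(\widehat\rho\vee\rho_{\star})^{|\ell-k|}$. To repair your argument you would need to replace the step ``bound $\Delta_j$ by the filtering error and re-expand'' with a decomposition in which only one unit of parameter error is paid per telescoping increment.
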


\subsection{Uniform consistency of  the posterior distributions}

Propositions~\ref{prop:filtering} and~\ref{prop:smoothing} are preliminary results that can be used to understand how the estimation errors made on the parameters of the HMM propagate upon the filtering and smoothing distributions.
We  assume that we are given a set of $p+n$ observations from the hidden Markov model driven by $\pistar$, $\Qstar$ and $\fstar$. The first $p$ observations are used to produce the estimators $\pihat$, $\Qhat$ and $\fhat$ while filtering and smoothing are performed with the last $n$ observations.
 In other words   the estimators $\pihat$, $\Qhat$ and $\fhat$ are measurable functions of $Y_{1:p}$ and  the objective is to estimate $\filtstar{k}(\cdot,Y_{p+1:p+k})$ and $\filtstar{k|n}(\cdot,Y_{p+k:p+n})$. 

\subsubsection{Parametric models}
In the parametric case, the hidden Markov model depends on a parameter $\theta_{\star}$ which lies in a subset of $\mathbb{R}^{q}$ for a given $q\ge 1$. In this situation, $\theta_{\star}$ may be estimated by $\widehat{\theta}\in \mathbb{R}^{q}$ and we may write $\pihat \eqdef \pi^{\widehat{\theta}}$, $\Qhat \eqdef Q_{\widehat{\theta}}$ and $\fhat \eqdef f^{\widehat{\theta}}$.  
 
\begin{theorem}
\label{theo:para}
Assume {\bf [H\ref{assum:Qstar}]} and  {\bf [H\ref{assum:stationary}]} hold. Assume also that 
 for all $x,x'\in\X$, $\theta\mapsto Q_{\theta}(x,x')$ is continuously differentiable with a bounded derivative in the neighborhood of $\theta_{\star}$ and that for all $x\in\X$ and all $y\in\Y$,  $\theta\mapsto f^{\theta}_x(y)$ is continuously differentiable in the neighborhood of $\theta_{\star}$ and such that the norm of its gradient is upper bounded in this neighborhood by a function $h_{x}$ such that $\int h_{x}(y)d\mathcal L^{D}(y)<+\infty$. Let  $\widehat{\theta}$ be a consistent estimator of $\theta_{\star}$. Then for any $1\le k \le n$, 
$$
\|\filtstar{k}(\cdot,Y_{p+1:p+k}) -\filthat{k}(\cdot,Y_{p+1:p+k})\|_{\mathrm{tv}} =  O_{\mathbb{P}}(\|\widehat{\theta}-\theta_{\star}\|_{2}) 
$$
and
$$
\|\filtstar{k|n}(\cdot,Y_{p+1:p+k}) -\filthat{k|n}(\cdot,Y_{p+1:p+k})\|_{\mathrm{tv}} =  O_{\mathbb{P}}(\|\widehat{\theta}-\theta_{\star}\|_{2})\eqsp. 
$$
\end{theorem}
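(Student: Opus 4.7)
The plan is to plug the parametric estimators into Propositions~\ref{prop:filtering} and~\ref{prop:smoothing} and upper bound each of the three contributions (initial distribution error, transition matrix error, and emission error) by $O_\mathbb{P}(\|\widehat{\theta}-\theta_\star\|_2)$.

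First I would handle the two finite-dimensional terms. Since $\theta \mapsto Q_\theta(x,x')$ is continuously differentiable in a neighborhood of $\theta_\star$ with bounded gradient, a Taylor expansion gives $\|\Qhat - \Qstar\|_F = O_\mathbb{P}(\|\widehat{\theta}-\theta_\star\|_2)$ on the event (of probability tending to one by consistency) that $\widehat{\theta}$ lies in this neighborhood. For $\|\pihat - \pistar\|_2$, I would note that under \textbf{[H\ref{assum:Qstar}]-\ref{assum:Qstar:rank})} the stationary distribution $\pi^\theta$ is obtained by solving the linear system $\pi^\theta(Q_\theta - I) = 0$ with the normalization $\sum_k \pi^\theta_k = 1$, whose unique solution is a rational function of the entries of $Q_\theta$ with non-vanishing denominator near $\theta_\star$. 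Hence $\theta \mapsto \pi^\theta$ is continuously differentiable at $\theta_\star$, and again $\|\pihat-\pistar\|_2 = O_\mathbb{P}(\|\widehat{\theta}-\theta_\star\|_2)$.

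Next, I would control the emission error sum, which is the main technical point. The gradient assumption gives, on the event that $\widehat{\theta}$ lies in the relevant neighborhood,
\[
\max_{x\in\X}\bigl|f^\star_x(y) - \fhat_x(y)\bigr| \le \max_{x\in\X} h_x(y)\,\|\widehat{\theta}-\theta_\star\|_2 \le \Bigl(\sum_{x\in\X} h_x(y)\Bigr)\|\widehat{\theta}-\theta_\star\|_2.
\]
So the task reduces to showing that $S_k \eqdef \sum_{\ell=1}^k \rho_\star^{k-\ell} c_\star^{-1}(Y_{p+\ell}) \sum_x h_x(Y_{p+\ell}) = O_\mathbb{P}(1)$ uniformly in $k$. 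Under \textbf{[H\ref{assum:stationary}]}, each $Y_{p+\ell}$ has marginal density $q^\star(y) = \sum_k \pistar_k f^\star_k(y)$. The key inequality is that $c_\star(y) \ge \dstar \sum_{x'} f^\star_{x'}(y) \ge q^\star(y)$ because $\pistar_k \le 1$, so $q^\star(y)/c_\star(y) \le 1/\dstar$. Consequently
\[
\E\bigl[c_\star^{-1}(Y_1)\,h_x(Y_1)\bigr] = \int \frac{h_x(y)\,q^\star(y)}{c_\star(y)}\,\rmd\Leb^D(y) \le \frac{1}{\dstar}\int h_x(y)\,\rmd \Leb^D(y) < \infty,
\]
and summing over $x$ together with the geometric factor gives $\E[S_k] \le (1-\rho_\star)^{-1}\dstar^{-1}\sum_x \int h_x\,\rmd\Leb^D < \infty$ uniformly in $k$. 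Markov's inequality then yields $S_k = O_\mathbb{P}(1)$, and combining with Proposition~\ref{prop:filtering} proves the filtering bound.

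The smoothing bound is obtained analogously from Proposition~\ref{prop:smoothing}. The only extra ingredient is that one must check $(1-\widehat{\rho})^{-1}$ and the sums $\sum_{\ell=1}^n (\widehat{\rho}\vee\rho_\star)^{|\ell-k|} c_\star^{-1}(Y_{p+\ell}) \max_x h_x(Y_{p+\ell})$ remain $O_\mathbb{P}(1)$. By consistency and continuous differentiability of $Q_\theta$, one has $\widehat{\delta} \to \dstar$ in probability, so with probability tending to one $\widehat{\rho} \le (1+\rho_\star)/2 < 1$; on this event we can replace $\widehat{\rho}\vee\rho_\star$ by the constant $(1+\rho_\star)/2$ and run the same integrability argument as for the filtering case. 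This gives the second conclusion, and the main obstacle throughout is the uniform-in-$k$ integrability check for the emission error sum, which hinges on the $\dstar$-based comparison between $q^\star$ and $c_\star$.
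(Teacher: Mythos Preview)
Your argument is essentially the paper's own proof: apply Propositions~\ref{prop:filtering} and~\ref{prop:smoothing}, Taylor-expand the finite-dimensional pieces, bound the emission sum via $q^\star(y)/c_\star(y)\le 1/\dstar$ together with stationarity and Markov's inequality, and for smoothing use consistency to confine $\widehat{\rho}$ below a constant strictly less than $1$. One small slip to fix: the displayed chain $c_\star(y)\ge \dstar\sum_{x'}f^\star_{x'}(y)\ge q^\star(y)$ is incorrect as written (the second inequality goes the wrong way since $\pistar_{x'}\ge\dstar$), but the conclusion $q^\star/c_\star\le 1/\dstar$ is still right once you combine $c_\star(y)\ge \dstar\sum_{x'}f^\star_{x'}(y)$ with $\sum_{x'}f^\star_{x'}(y)\ge q^\star(y)$.
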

The smoothness assumption in Theorem~\ref{theo:para} is usual to study the asymptotic distribution of the maximum likelihood estimator in parametric HMMs. By Theorem~\ref{theo:para},  tight bounds on the uniform convergence rate of $\|\filtstar{k}(\cdot,Y_{p+1:p+k}) -\filthat{k}(\cdot,Y_{p+1:p+k})\|_{\mathrm{tv}}$ and of
$\|\filtstar{k|n}(\cdot,Y_{p+1:p+k}) -\filthat{k|n}(\cdot,Y_{p+1:p+k})\|_{\mathrm{tv}}$
may be derived by controlling the estimation error $\|\widehat{\theta}-\theta_{\star}\|$.  There exist several results on this error term depending on the algorithm used to obtain $\widehat{\theta}$. For instance, \cite{tadic:2010} provides explicit upper bounds for this error term in the case  where $\widehat{\theta}$ is a recursive maximum likelihood estimator of $\theta_{\star}$, under additional assumptions on the model. 

\begin{proof}
First,  under {\bf [H\ref{assum:Qstar}]} and  {\bf [H\ref{assum:stationary}]}, the assumption on $\theta\mapsto Q_{\theta}(x,x')$ implies that  $\theta\mapsto \pi^{\theta}_x$ is continuously differentiable with a bounded derivative in the neihgborhood of $\theta_{\star}$. Notice also that $\sup_{k\geq 1} \rho_{\star}^{k-1}\leq 1$ and $\sup_{k\geq 1}\widehat{\rho}^{\,k-1}\leq 1$. 
Then using  Taylor expansion  we easily get that the first two terms of the upper bound in Propositions~\ref{prop:filtering} and~\ref{prop:smoothing} are $O_{\mathbb{P}}(\|\widehat{\theta}-\theta_{\star}\|_{2})$. 
 There just remains to control the last term for each of the upper bound in Propositions~\ref{prop:filtering} and~\ref{prop:smoothing}. Using a Taylor expansion, Cauchy-Schwarz inequality, and Proposition~\ref{prop:filtering}, we get that for any $1\le k \le n$,
\[
\|\filtstar{k}(\cdot,Y_{p+1:p+k}) -\filthat{k}(\cdot,Y_{p+1:p+k})\|_{\mathrm{tv}}\le O_{\mathbb{P}}(\|\widehat{\theta}-\theta_{\star}\|_{2}) + 
\|\widehat{\theta}-\theta_{\star}\|_{2}\sum_{\ell=1}^{k}\rho_{\star}^{k-\ell}c_{\star}^{-1}(Y_{p+\ell})\sum_{x\in\X}h_{x}(Y_{p+\ell})\eqsp.
\]
As the $(Y_{j})_{j\geq 1}$ are stationary with distribution having  density $\sum_{x\in\X}\pi^{\star}_{x}f^{\star}_{x} (y)\leq c_{\star}(y)/\delta^{\star}$, the random variable $\sum_{\ell=1}^{k}\rho_{\star}^{k-\ell}c_{\star}^{-1}(Y_{p+\ell})\sum_{x\in\X}h_{x}(Y_{p+\ell})$ is nonnegative and has expectation
upper bounded by
$$
\frac{1}{\delta^{\star}}\sum_{\ell=1}^{k}\rho_{\star}^{k-\ell}\sum_{x\in\X}\int h_{x}(y)d\mathcal L^{D}(y)\leq 
\frac{1-\delta^{\star}}{(\delta^{\star})^{2}}\sum_{x\in\X}\int h_{x}(y)d\mathcal L^{D}(y)
<+\infty\eqsp.
$$
Thus $\sum_{\ell=1}^{k}\rho_{\star}^{k-\ell}c_{\star}^{-1}(Y_{p+\ell})\sum_{x\in\X}h_{x}(Y_{p+\ell})=O_{\mathbb{P}}(1)$ so that we get the first point of Theorem \ref{theo:para}.
The result for the smoothing distributions follows the same lines after noticing that, for some $\epsilon >0$ such that $\rho_{\star}+\epsilon <1$, the event $\left\{\widehat{\rho}\geq \rho_{\star}+\epsilon\right\}$ has probability tending to $0$ as $p$ tends to infinity when $\widehat{\theta}$ is a consistent estimator of $\theta_{\star}$. 
\end{proof}

\subsubsection{Nonparametric models}
\label{sec:consistency:nonparametric}
We first state a general theorem that gives a control of the uniform consistency of the posterior distributions in terms of the risk of the nonparametric estimators. The theorem also holds in the parametric context, however, the parametric literature usually studies the distributional properties of the estimators,  while the nonparametric one studies mostly the risk. As usual in the hidden Markov model literature, the model parameters are identifiable up to permutations of the hidden states labels. Therefore, without loss of generality, the following results are stated indicating the prospective permutation of the states. Let $\mathcal S_{K}$ be the set of permutations of $\{1,\ldots,K\}$. If $\tau$ is a permutation, let $\mathbb P_{\tau}$ be the permutation matrix associated with $\tau$.

\begin{theorem}
\label{theo:nonpara}
Assume {\bf [H\ref{assum:Qstar}]-\ref{assum:Qstar:deltastar})} and  {\bf [H\ref{assum:stationary}]} hold. Then for all $n\geq 1$, for any permutation $\tau_{p}\in\mathcal S_{K}$, 
\begin{multline*}
\sup_{ 1\le k \leq n}\E\left[\|\filtstar{k}(\cdot,Y_{p+1:p+k}) -\filthat{k}^{\,\tau_p}(\cdot,Y_{p+1:p+k})\|_{\mathrm{tv}}\right]\\
 \le \frac{C_{\star}}{(\delta^{\star})^{2}}\left\{\E[\left\|\pistar-\mathbb P_{\tau_{p}}\pihat_{p}\right\|_{2}] + \E[\| \Q^{\star}-\mathbb P_{\tau_{p}}\Qhat_{p}\mathbb P_{\tau_{p}}^{\top}\|_{F}]
+\sum_{x\in\X}\E[\lVert f^{\star}_{x}-\fhat_{\tau_{p}(x)}\lVert_{1}]\right\}
\end{multline*}
and
\begin{multline*}
\sup_{ 1\le k \leq n}\E\left[\|\filtstar{k|n}(\cdot,Y_{p+1:p+n}) -\filthat{k|n}^{\,\tau_p}(\cdot,Y_{p+1:p+n})\|_{\mathrm{tv}}\right] \\
\leq  \frac{C_{\star}}{(\delta^{\star})^2}\left\{\E[\left\|\pistar-\mathbb P_{\tau_{p}}\pihat_{p}\right\|_{2}] + \E\big[\lVert \Q^{\star}-\mathbb P_{\tau_{p}}\Qhat_{p}\mathbb P_{\tau_{p}}^{\top}\lVert_{F}/\widehat\delta\big]
 +  \sum_{x\in\X}\E\left[\lVert f^{\star}_{x}-\fhat_{\tau_{p}(x)}\lVert_{1}/\widehat\delta\right] \right\}.
\end{multline*}
Here,  
$\filthat{k}^{\,\tau_p}$ and $\filthat{k|n}^{\,\tau_p}$ are the estimation of $\filtstar{k}$ and $\filtstar{k|n}$ based on $\mathbb P_{\tau_{p}}\Qhat\,\mathbb P_{\tau_{p}}^{\top}$, $\mathbb P_{\tau_{p}}\pihat$ and $\fhat_{\tau_{p}(x)}$, for all $x\in\X$.
\end{theorem}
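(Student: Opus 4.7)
The plan is to invoke Propositions~\ref{prop:filtering} and~\ref{prop:smoothing} with the permuted estimators $(\mathbb P_{\tau_{p}}\pihat_{p},\mathbb P_{\tau_{p}}\Qhat_{p}\mathbb P_{\tau_{p}}^{\top},\fhat_{\tau_{p}(\cdot)})$ playing the role of $(\pihat,\Qhat,\fhat)$. The forward--backward recursions~\eqref{eq:recursion-fil}--\eqref{eq:recursion-smooth} are equivariant under a relabelling of the latent states, so feeding them these inputs produces exactly $\filthat{k}^{\,\tau_{p}}$ and $\filthat{k|n}^{\,\tau_{p}}$, and the pointwise bounds then read with the differences $\pistar-\mathbb P_{\tau_{p}}\pihat_{p}$, $\Qstar-\mathbb P_{\tau_{p}}\Qhat_{p}\mathbb P_{\tau_{p}}^{\top}$ and $f^{\star}_{x}-\fhat_{\tau_{p}(x)}$ in place of the native error quantities. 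The initial distribution and transition matrix contributions in the bound of Proposition~\ref{prop:filtering} are then routine: $\rho_\star^{k-1}\leq 1$ and $1/(1-\rho_\star)=(1-\delta^\star)/\delta^\star\leq 1/\delta^\star$ immediately give terms bounded by $\E[\|\pistar-\mathbb P_{\tau_{p}}\pihat_{p}\|_{2}]/\delta^\star$ and $\E[\|\Qstar-\mathbb P_{\tau_{p}}\Qhat_{p}\mathbb P_{\tau_{p}}^{\top}\|_{F}]/(\delta^\star)^{2}$, which fit inside the stated prefactor $C_\star/(\delta^\star)^{2}$.

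The main obstacle is the emission term
\[
\E\!\left[\sum_{\ell=1}^{k}\rho_\star^{k-\ell}\,c_\star^{-1}(Y_{p+\ell})\max_{x\in\X}\bigl|f^{\star}_{x}(Y_{p+\ell})-\fhat_{\tau_{p}(x)}(Y_{p+\ell})\bigr|\right],
\]
because $\fhat$ is a function of $Y_{1:p}$ and is therefore correlated with the sequence $Y_{p+1:p+n}$ on which the recursion is run, so the (stationary) marginal density of $Y_{p+\ell}$ cannot be used directly as in Theorem~\ref{theo:para}. The remedy is to condition on $\mathcal F_{p}\eqdef\sigma(Y_{1:p})$: conditional on $\mathcal F_{p}$, the density of $Y_{p+\ell}$ is
\[
g_{\ell}(y\mid Y_{1:p})=\sum_{x\in\X}\P(X_{p+\ell}=x\mid Y_{1:p})\,f^{\star}_{x}(y)\;\leq\;\sum_{x\in\X}f^{\star}_{x}(y)\;\leq\;c_\star(y)/\delta^\star,
\]
the last inequality being the one already used in the proof of Theorem~\ref{theo:para} and following from $c_\star(y)\geq \delta^\star\sum_{x}f^{\star}_{x}(y)$. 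Bounding $\max_{x}$ by $\sum_{x}$ and integrating against $g_{\ell}(\cdot\mid Y_{1:p})$ yields
\[
\E\!\left[c_\star^{-1}(Y_{p+\ell})\max_{x}\bigl|f^{\star}_{x}(Y_{p+\ell})-\fhat_{\tau_{p}(x)}(Y_{p+\ell})\bigr|\,\Big|\,\mathcal F_{p}\right]\leq \frac{1}{\delta^\star}\sum_{x\in\X}\|f^{\star}_{x}-\fhat_{\tau_{p}(x)}\|_{1}.
\]
Taking the unconditional expectation, summing the geometric weights $\rho_\star^{k-\ell}$ (which introduces another factor $1/(1-\rho_\star)\leq 1/\delta^\star$), and multiplying by $C_\star$ gives the third summand of the first bound, with the prefactor $C_\star/(\delta^\star)^{2}$ uniform in~$k$.

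For the smoothing assertion the same steps apply to Proposition~\ref{prop:smoothing}, with one difference. The factor $1/(1-\widehat\rho)$ appearing in the $\Qstar$-term and the weights $(\widehat\rho\vee\rho_\star)^{\,|\ell-k|}$ summed over $\ell=1,\ldots,n$ (bounded by $2/(1-\widehat\rho\vee\rho_\star)$) cannot be treated outside the expectation because $\widehat\rho=1-\widehat\delta/(1-\widehat\delta)$ is $\mathcal F_{p}$-measurable and random; they contribute the $1/\widehat\delta$ factors inside the expectations of the second and third terms. Uniformity in $k\in\{1,\ldots,n\}$ is automatic since all the resulting bounds are, after summation over~$\ell$, independent of~$k$.
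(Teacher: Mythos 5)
Your proposal is correct and follows essentially the same route as the paper: apply Propositions~\ref{prop:filtering} and~\ref{prop:smoothing} to the permuted estimators, then condition on the past to replace the weighted pointwise emission error $c_{\star}^{-1}(Y_{p+\ell})\max_x|f^{\star}_x(Y_{p+\ell})-\fhat_{\tau_p(x)}(Y_{p+\ell})|$ by the $\mathrm{L}^1$ distance, using $c_{\star}(y)\geq\delta^{\star}\sum_x f^{\star}_x(y)$, and keep the random factor $1/\widehat\delta$ inside the expectation for the smoothing bound. The only cosmetic difference is that you condition on $\sigma(Y_{1:p})$ whereas the paper conditions on $\sigma(Y_{1:p+\ell-1})$ (whose one-step predictive density $g_\ell$ also satisfies $c_{\star}^{-1}g_\ell\leq(1-\delta^{\star})/\delta^{\star}$); both conditionings yield the stated bound.
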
 

\begin{proof}
For any $x\in \X$ and any $1\le \ell\le n$,
\begin{align*}
\mathbb{E}\left[c_{\star}^{-1}(Y_{p+\ell})\left|f^{\star}_{x}(Y_{p+\ell})-\hat{f}_{\tau_{p}(x)}(Y_{p+\ell})\right|\right] &= \mathbb{E}\left[\mathbb{E}\left[c_{\star}^{-1}(Y_{p+\ell})\left|f^{\star}_{x}(Y_{p+\ell})-\hat{f}_{\tau_{p}(x)}(Y_{p+\ell})\right|\middle|Y_{1:p+\ell-1}\right]\right]\eqsp,
\end{align*}
with
\[
\mathbb{E}\left[c_{\star}^{-1}(Y_{p+\ell})\left|f^{\star}_{x}(Y_{p+\ell})-\hat{f}_{\tau_{p}(x)}(Y_{p+\ell})\right|\middle|Y_{1:p+\ell-1}\right] = \int \left|f^{\star}_{x}(z)-\hat{f}_{\tau_{p}(x)}(z)\right|c_{\star}^{-1}(z)g_{\ell}(z) \d z\eqsp,
\]
where $g_{\ell}(z) \eqdef \sum_{x_{\ell-1},x_{\ell}\in \X}\filtstar{\ell-1}(x_{\ell-1},Y_{p+1:p+\ell-1})\Qstar(x_{\ell-1},x_{\ell})\fstar_{x_{\ell}}(z)$. By {\bf [H\ref{assum:Qstar}]-\ref{assum:Qstar:deltastar})} and \eqref{eq:def:cstar}, $c_{\star}^{-1}(z)g_{\ell}(z) \le (1-\delta^{\star})/\delta^{\star}$ and 
\[
\mathbb{E}\left[c_{\star}^{-1}(Y_{p+\ell})\left|f^{\star}_{x}(Y_{p+\ell})-\hat{f}_{x}(Y_{p+\ell})\right|\middle|Y_{1:p+\ell-1}\right]  \leq (1-\delta^{\star})\|f^{\star}_{x}-\hat{f}_{\tau_{p}(x)}\|_1/\delta^{\star}\eqsp.
\]
Therefore,  the result  for the filtering distributions comes from taking the supremum and then the expectation in the upper bound of Proposition~\ref{prop:filtering}.
The proof for the smoothing distributions follows the same steps.
\end{proof}

What comes out  in Theorem~\ref{theo:nonpara} is a control driven by the $\mathrm{L}^{1}$-risk of the emission densities. In Section~\ref{sec:spectral:hmm}, we propose a spectral method to obtain, in the nonparametric context, estimators of the transition matrix, the stationary distribution and the emission densities. The general idea is that of projection methods, so that at the end we obtain a control on the  $\mathrm{L}^{2}$-risk of the emission densities. This control can be easily transfered whenever $\Y$ is a compact subset of $\R^{D}$, since in such a case, for some $C(\Y)>0$ we have, for any square integrable functions $h_{1}$ and $h_{2}$,
\begin{equation}
\label{eq:L1L2}
\lVert h_{1}-h_{2}\lVert_{1} \leq C(\Y)\lVert h_{1}-h_{2}\lVert_{2}.
\end{equation}
We end this section by setting the result that follows when using the spectral estimators. Let $(M_{r})_{r\geq1}$ be an increasing sequence of integers, and let $(\Proj_{M_{r}})_{r\geq 1}$ be a sequence of nested subspaces such that their union is dense in $\mathrm{L}^{2}(\Y,\L)$.  Let $\Phi_{M_{r}}:=\{\varphi_{1},\ldots,\varphi_{M_{r}}\}$ be an orthonormal basis of $\Proj_{M_{r}}$. Note that for all $f\in\mathrm{L}^{2}(\Y,\L)$,
\begin{equation}
\label{eq:ConvergenceL2}
\lim_{p\to\infty}\sum_{m=1}^{M_{r}}\langle f,\varphi_{m}\rangle\varphi_{m}= f\,,
\end{equation}
in $\mathrm{L}^{2}(\Y,\L)$.  Note also that changing $M_r$ may change all functions $\varphi_{r}$, $1\leq m \leq M_r$ in the basis $\Phi_{M_r}$, which will not be indicated in the notation for better clarity.
We shall also drop the index $r$ and write $M$ instead of $M_{r}$.\\
The spectral estimators of the emission densities  will be projection estimators. Let us denote $f^{\star}_{M,1},\ldots, f^{\star}_{M,K}$ the projections of the emission densities on the space $\Proj_{M}$, that is, for $x\in\X$,
$$
f^{\star}_{M,x}=\sum_{m=1}^{M}\langle f^{\star}_{x},\varphi_{m}\rangle\varphi_{m}.
$$
We need a further assumption, which, together with {\bf [H\ref{assum:Qstar}]-\ref{assum:Qstar:deltastar})} and  {\bf [H\ref{assum:stationary}]},  has been proved sufficient to get identifiability in nonparametric HMMs, see \cite{gassiat2013finite}.
\begin{hypH}
\label{assum:linearly:ind}
The family of emission densities $\F^{\star}:=\{f^{\star}_{1},\ldots,f^{\star}_{K}\}$ is linearly independent.
\end{hypH}
Finally, the following quantity is needed in the control of the $\mathrm{L}^{2}$-risk of the spectral estimators. For any $M$, define 
\begin{equation}
\label{eq:def:eta3}
\eta_{3}^{2}(\Phi_{M})\eqdef\sup_{y,y'\in\Y^{3}}\sum_{a,b,c=1}^{M}\left(\varphi_{a}(y_{1})\varphi_{b}(y_{2})\varphi_{c}(y_{3})-\varphi_{a}(y'_{1})\varphi_{b}(y'_{2})\varphi_{c}(y'_{3})\right)^{2}\,.
\end{equation}
Applying Theorem \ref{theo:nonpara} and \eqref{eq:L1L2} we get the following corollary whose proof is omitted: the first point is an application of Corollary \ref{cor:spectral}, the second point is obtained following the same lines as the proof of Corollary \ref{cor:spectral}.

\begin{corollary}
\label{th:uniform:consistency}
Assume {\bf [H\ref{assum:Qstar}]-[H\ref{assum:linearly:ind}]} hold. Assume also that for all $x\in\X$, $\fstar_x\in\mathrm{L}^{2}(\Y,\L)$. Let $M_{p}$ be a sequence of integers tending to infinity such that $\eta_3(\Phi_{M_p}) = o(\sqrt{p/\log p})$. For each $p$, define $\fhat$,
$\Qhat$ and $\pihat$ as the estimators obtained by the spectral algorithm given in Section~\ref{sec:spectral:hmm} with this choice of $M_p$. Then, there exists a sequence of permutations $\tau_{p}\in\mathcal S_{K}$ such that
$$
\E\left[\sup_{ k \geq 1}\|\filtstar{k}(\cdot,Y_{p+1:p+k}) -\filthat{k}^{\,\tau_p}(\cdot,Y_{p+1:p+k})\|_{\mathrm{tv}}\right]=O\big(\eta_3(\Phi_{M_p})\sqrt {\log p/p}
+ \sum_{x\in\X}\|f^{\star}_{x}-f^{\star}_{M_{p},x}\|_{2}\big)
$$
and
$$
\E\left[\sup_{1\leq k \leq n}\|\filtstar{k|n}(\cdot,Y_{p+1:p+n}) -\filthat{k|n}^{\,\tau_p}(\cdot,Y_{p+1:p+n})\|_{\mathrm{tv}}\right] 
=O\big(\eta_3(\Phi_{M_p})\sqrt {\log p/p}
+ \sum_{x\in\X}\|f^{\star}_{x}-f^{\star}_{M_{p},x}\|_{2}\big)
.
$$
\end{corollary}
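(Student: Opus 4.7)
The plan is to combine Theorem~\ref{theo:nonpara} (which bounds the posterior errors in terms of the parameter estimation errors and the $\mathrm{L}^{1}$-risk of the emission densities), the $\mathrm{L}^{2}$-control of the spectral estimators provided by Corollary~\ref{cor:spectral}, and the inequality \eqref{eq:L1L2}, which transfers an $\mathrm{L}^{2}$-rate into an $\mathrm{L}^{1}$-rate under the implicit compactness assumption on $\Y$. First, I would apply Theorem~\ref{theo:nonpara} with the permutation $\tau_{p}$ delivered by Corollary~\ref{cor:spectral}: this reduces the first assertion to bounding $\E[\|\pistar-\mathbb P_{\tau_{p}}\pihat_{p}\|_{2}]$, $\E[\|\Qstar-\mathbb P_{\tau_{p}}\Qhat_{p}\mathbb P_{\tau_{p}}^{\top}\|_{F}]$ and $\sum_{x\in\X}\E[\|\fstar_{x}-\fhat_{\tau_{p}(x)}\|_{1}]$.

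The first two quantities are directly bounded by Corollary~\ref{cor:spectral} at the announced rate $\eta_{3}(\Phi_{M_p})\sqrt{\log p/p}$. For the third, since the spectral estimator satisfies $\fhat_{\tau_{p}(x)}\in\Proj_{M_p}$, I would perform the standard bias/variance decomposition
$$
\|\fstar_{x}-\fhat_{\tau_{p}(x)}\|_{1}\leq \|\fstar_{x}-f^{\star}_{M_p,x}\|_{1}+\|f^{\star}_{M_p,x}-\fhat_{\tau_{p}(x)}\|_{1}.
$$
The first term is the projection bias, controlled by $\|\fstar_{x}-f^{\star}_{M_p,x}\|_{2}$ via \eqref{eq:L1L2}, and contributes the approximation term of the corollary. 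The second term lives in the finite-dimensional subspace $\Proj_{M_p}$ of $\mathrm{L}^{2}(\Y,\L)$, so by \eqref{eq:L1L2} it is bounded by $C(\Y)\|f^{\star}_{M_p,x}-\fhat_{\tau_{p}(x)}\|_{2}$, which is in turn controlled by the spectral variance bound from Corollary~\ref{cor:spectral}. Substituting back into Theorem~\ref{theo:nonpara} proves the filtering assertion.

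The smoothing assertion follows the same route, but the main obstacle is the extra random factor $1/\widehat\delta$ appearing in Theorem~\ref{theo:nonpara}, which may not even be finite. I would handle it by splitting the expectation on the event $\mathcal E_{p}\eqdef\{\widehat\delta\geq \dstar/2\}$ and its complement. On $\mathcal E_{p}$, the factor is deterministically bounded by $2/\dstar$ and the argument of the first assertion applies verbatim, giving the target rate. On $\mathcal E_{p}^{c}$ one uses the trivial total variation bound $\|\cdot\|_{\mathrm{tv}}\leq 1$ to avoid ever meeting $1/\widehat\delta$; the contribution is then $\P(\mathcal E_{p}^{c})$, which by the inequality $|\widehat\delta-\dstar|\leq \|\Qstar-\mathbb P_{\tau_{p}}\Qhat_{p}\mathbb P_{\tau_{p}}^{\top}\|_{F}$ combined with Markov's inequality and Corollary~\ref{cor:spectral} decays at the rate $\eta_{3}(\Phi_{M_p})\sqrt{\log p/p}$, and is thus absorbed in the overall bound. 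The delicate point, and the reason the corollary statement refers to the proof of Corollary~\ref{cor:spectral}, is that one must return directly to Proposition~\ref{prop:smoothing} rather than to the already-integrated bound of Theorem~\ref{theo:nonpara}, so as to keep the random factor $1/\widehat\delta$ inside the expectation and truncate it on the good event before integrating; this way no $1/\widehat\delta$ ever appears on $\mathcal E_{p}^{c}$, and the proof closes without losing the rate.
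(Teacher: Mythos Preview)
Your proposal is correct and mirrors the paper's own outline: the paper omits the proof, stating only that the filtering assertion follows from Theorem~\ref{theo:nonpara}, inequality~\eqref{eq:L1L2} and Corollary~\ref{cor:spectral}, while the smoothing assertion is obtained ``following the same lines as the proof of Corollary~\ref{cor:spectral}''. Your bias/variance split for the emission densities and your treatment of the random factor $1/\widehat\delta$ via the good event $\{\widehat\delta\ge\dstar/2\}$ (returning to Proposition~\ref{prop:smoothing} before integrating, and using the trivial total-variation bound on the complement) are exactly the natural way to make that outline precise.
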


One may consider the following standard examples.% in the aforementioned frame:
\begin{enumerate}[-]
\item {\bf (Spline)} The space of piecewise polynomials of degree bounded by $d_r$ based on the regular partition with $p_{r}^{D}$ regular pieces on $\Y$. %Consider the Legendre basis on each piece 
It holds that $M_{r}=(d_{r}+1)^{D}p_{r}^{D}$. 
%In the case $d_{n}=0$ (the histogram basis), one can check that $\gamma_{k}\leq\sqrt 2M_{n}^{\frac{k}2}$ {for} $k=1,2,3$.
\item {\bf (Trig.)} The space of real trigonometric polynomials on $\Y$ with degree less than $r$. It holds that $M_{r}=(2r+1)^{D}$.
% and $\gamma_{k}\leq4\sqrt 2M_{n}^{\frac{k}2}$ for $k=1,2,3$.
\item {\bf (Wav.)} A wavelet basis $\Phi_{M_{r}}$ of scale $r$ on $\Y$,  see
\cite{MR1228209}. 
It holds that $M_{r}=2^{(r+1)D}$.
\end{enumerate}
In those examples, there exists a constant $C_{\eta}>0$ such that $\eta_{3}(M)\leq C_{\eta}M^{k/2}$, so that the rate of uniform convergence for the posterior probabilities is
$
O\big({M_p}^{3/2}\sqrt {\log p/p}
+ \sum_{x\in\X}\|f^{\star}_{x}-f^{\star}_{M_{p},x}\|_{2}\big)
$.

\section{Nonparametric spectral estimation of HMMs}
\label{sec:spectral:hmm}

\subsection{Description of the spectral method}
\label{sec:SpectralMethod}
This section describes a tractable approach to get nonparametric estimators of the emission densities and of the transition matrix. Our procedure relies on the estimation of the
projections of the emission laws onto nested subspaces of increasing complexity. This allows to illustrate the uniform consistency result provided in the previous section.  

Recall that $(\Proj_{M_{r}})_{r\geq 1}$ is a sequence of nested subspaces of $\mathrm{L}^{2}(\Y,\L)$ associated with their orthonormal basis $(\Phi_{M_{r}})_{r\geq 1}$. Since projections are linear functionals of the distributions, it is possible to use spectral methods to estimate the projections of the emission distributions on the basis $\Phi_{M}$ for each M. To this end, our approach is based on the work described in \cite{anandkumar2012method}. In particular, we follow their strategy to get an estimation of the emission densities. However, the dependency in the dimension is of crucial importance in the nonparametric framework and it has not been addressed in \cite{anandkumar2012method}. Hence, we present in Theorem C.3 a new quantitative version of the work \cite{anandkumar2012method} that accounts for the dimension M. Moreover, the authors of \cite{anandkumar2012method} invoke a way of estimating the transition matrix $\Qstar$ but they do not give any theoretical garantees regarding this estimator. In this paper, we introduce a slightly different estimator that is based on a surrogate $\tilde\pi$ (see Step 8 of Algorithm 1) of the stationary distribution. Our estimator (see Step 9 of Algorithm 1) is then build from the "observable" operator (rather than its left singular vectors as done in \cite{anandkumar2012method}). Eventually, Theorem C.2 gives the theoretical guarantees of our estimator of the transition matrix and its stationnary distribution.

The computation of those estimators is particularly simple: it is based on one singular value decomposition, matrix inversions and one diagonalization. It is proved in Theoremn~\ref{thm:ThmMain1} and~\ref{thm:HSK} that, with overwhelming probability, all the matrix inversions and the diagonalization can be done rightfully.

For all $(p\times q)$ matrix $A$ with $p\geq q$, denote by $\sigma_{1}(A)\geq\sigma_{2}(A)\geq\ldots\geq\sigma_{q}(A)\geq0$ its singular values and $\lVert\cdot\lVert$ its operator norm. When $A$ is invertible, let $\kappa(A):=\sigma_{1}(A)/\sigma_{q}(A)$ be its condition number. $A^{\top}$ is the transpose matrix of $A$, $A(\ell,\ell')$ its $(\ell,\ell')$th entry, $A(\ldotp,\ell)$ its $\ell$th column and $A(k,\ldotp)$ its $k$th line. When $A$ is a $(p\times p)$ diagonalizable matrix, its eigenvalues are written $\lambda_{1}(A)\geq\lambda_{2}(A)\geq\ldots\geq\lambda_{p}(A)$. For any $1\leq q\leq+\infty$, $\lVert\cdot\lVert_{q}$ is the usual $\mathrm{L}^q$ norm for vectors. For any row or column vector $v$, denote by $\D v$ the diagonal matrix with diagonal entries $v_{i}$. The following vectors, matrices and tensors are used throughout the paper:
\begin{enumerate}[-]
\item ${\l}\in\R^{M}$ is the projection of the distribution of one observation on the basis $\Phi_{M}$: for all $a\in\{1,\ldots,M\}$, $\l(a)\eqdef\E[\varphi_{a}(Y_{1})]$ ;
\item $\N\in\R^{M\times M}$ is the joint distribution of two consecutive observations: for all $(a,b)\in\{1,\ldots,M\}^{2}$, $\N(a,b)\eqdef\E[\varphi_{a}(Y_{1})\varphi_{b}(Y_{2})]$ ;
\item $\M\in\R^{M\times M\times M}$ is the joint distribution of three consecutive observations: for all $(a,b,c)\in\{1,\ldots,M\}^{3}$, $\M(a,b,c)\eqdef\E[\varphi_{a}(Y_{1})\varphi_{b}(Y_{2})\varphi_{c}(Y_{3})]$ ;
\item $\O\in\R^{M\times K}$ is the conditional distribution of one observation on the basis $\Phi_{M}$: for all $(m,x)\in\{1,\ldots,M\}\times\X$, $\O(m,x)\eqdef\E[\varphi_{m}(Y_{1})|X_{1}=x]=\langle f_{x}^{\star},\varphi_{m}\rangle$ ;
\item For all $x\in\X$, $\fstar_{M,x}$ is the projection of the emission laws on the subspace $\Proj_{M}$: , $\fstar_{M,x}\eqdef\sum_{m=1}^{M}\O(m,x)\varphi_{m}$. Write $\f^{\star}_{M}\eqdef( f^{\star}_{M,1},\ldots, f^{\star}_{M,K})$ ;
\item $\Pj\in\R^{M\times M}$ is the joint distribution of  $(Y_{1},Y_{3})$: for all $(a,c)\in\{1,\ldots,M\}^{2}$, $\Pj(a,c)\eqdef\E[\varphi_{a}(Y_{1})\varphi_{c}(Y_{3})]$.
\end{enumerate}

%\begin{remark}
%The projection $\Pi_{\mathrm{TM}}$ on the set of transition matrices of size $K$ $($resp. the projection $\Pi_{\Delta_{K}}$ on the ($K$-1)-simplex$)$ of Algorithm \ref{alg:Spectral} can be computed using alternating projections. Indeed, observe that the set of transpose transition matrices can be viewed as the product $\Delta_{K}\times \cdots\times\Delta_{K}$, where $\Delta_{K}$ denotes the ($K$-1)-dimensional simplex. As $\Delta_{K}$ is the intersection between two ``simple'' convex sets: an affine space and the orthant, an alternating projection method can be used to compute the projection $\Pi_{\mathrm{TM}}$. We deduce that the projection onto the set of transition matrices may be computed with $\mathcal O(K^{2})$ simple projections $($onto affine spaces and orthants$)$.
%\end{remark}

\begin{algorithm}[t]
  \SetAlgoLined
  \KwData{An observed chain $(Y_{1},\ldots,Y_{p+2})$ and a number of hidden states $K$.}
  \KwResult{Spectral estimators $\pihat$, $\Qhat$ and $(\fhat_{M,x})_{x\in\X}$.}
    \BlankLine
\begin{enumerate}[{\bf [Step 1]}]
\item For all $a,b,c$ in $\{1,\ldots,M\}$, consider the following empirical estimators: $\hatl(a)\eqdef\sum_{s=1}^{p}\varphi_{a}(Y_{s})/p$, $\hatM(a,b,c)\eqdef \sum_{s=1}^{p}\varphi_{a}(Y_{s})\varphi_{b}(Y_{s+1})\varphi_{c}(Y_{s+2})/p$, $\hatN(a,b)\eqdef \sum_{s=1}^{p}\varphi_{a}(Y_{s})\varphi_{b}( Y_{s+1})/p$ and $\hatPj(a,c)\eqdef \sum_{s=1}^{p}\varphi_{a}(Y_{s})\varphi_{c}(Y_{s+2})/p$.
\item
Let $\Uhat$ be the $M\times K$ matrix of orthonormal right singular vectors of $\hatPj$ corresponding to its top $K$ singular values. 
\item 
For all $b\in\{1,\ldots,M\}$, set $\Bhat(b)  \eqdef (\Uhat^{\top}\hatPj\Uhat)^{-1}\Uhat^{\top}\hatM(\ldotp,b,\ldotp)\Uhat$.
\item
Set $\Theta$ a $(K\times K)$ unitary matrix uniformly drawn and, $\forall x\in\X$, ${\Chat}(x):=\sum_{b=1}^{M}(\Uhat\Theta)(b,x)\Bhat(b)$.
\item
Compute $\widehat\Rbf$ a $(K\times K)$ unit Euclidean norm columns matrix that diagonalizes the matrix $\Chat(1)$:
\[
\widehat\Rbf^{-1}\Chat(1)\widehat\Rbf=\D{(\widehat\Lambda(1,1),\ldots,\widehat\Lambda(1,K))}\,.
\]
\item
For all $x,x'\in\X$, set $\widehat\Lambda(x,x'):=(\widehat\Rbf^{-1}\Chat(x)\widehat\Rbf)(x',x')$ and $\hatO:=\widehat\U\Theta\widehat\Lambda$.
\item
Consider the estimator $(\fhat_{M,x})_{x\in\X}$ defined by, for all $x\in\X$, $\fhat_{M,x} \eqdef \sum_{m=1}^{M}\hatO(m,x)\varphi_{{m}}$.
\item 
Set $\tilde{\pi} \eqdef \big(\Uhat^{\top}\hatO\big)^{-1}\Uhat^{\top}\hatl$.
\item
Consider the transition matrix estimator $\Qhat \eqdef \Pi_{\mathrm{TM}}\Big(\big(\Uhat^{\top}\hatO\D{\tilde{\pi}}\big)^{-1}\Uhat^{\top}\hatN\Uhat\big(\hatO^{\top}\Uhat\big)^{-1}\Big)$ where $\Pi_{\mathrm{TM}}$ denotes the projection (with respect to the scalar product given by the Frobenius norm) onto the convex set of transition matrices, and define
$\pihat$ as the stationary distribution of $\Qhat$.
\end{enumerate}
\caption{Nonparametric spectral estimation of the transition matrix and the emission laws}
\label{alg:Spectral}
\end{algorithm}

\subsection{Variance of the spectral estimators}
\label{subsec:spectral}
This section displays the results which allow to derive the asymptotic properties of the spectral estimators. The aim of Theorem~\ref{thm:spectral} is to provide an upper bound for the variance term with an explicit dependency with respect to both $p$ and $M$. The way it depends in $M$ is {described} by the quantity $\eta_{3}$ defined in \eqref{eq:def:eta3}. Recall that, in the examples {\bf (Spline)}, {\bf (Trig.)} and {\bf (Wav.)}, we have $\eta_{3}(\Phi_{M})\leq C_{\eta}M^{3/2}$ with $C_{\eta}>0$ a constant. 
In this section, assumption {\bf [H\ref{assum:Qstar}]} may be replaced by the following  weaker assumption {\bf [H\ref{assum:Qstarprime}']}.
\begin{hypHprime}
\label{assum:Qstarprime}
\begin{enumerate}[a)]
\item \label{assum:Qstar:rankprime}The transition matrix $\Qstar$ has full rank.
\item \label{assum:Qstar:deltastarprime}$(X_n)_{n\ge 1}$ is irreducible and aperiodic.
\end{enumerate}
\end{hypHprime}
Note that under {\bf [H\ref{assum:Qstarprime}']} and {\bf [H\ref{assum:stationary}]}, there exists $\pistarmin>0$ such that, for all $x\in\X$,
\begin{equation}
\label{eq:pistarmin}
\pistar_x\ge\pistarmin\eqsp.
\end{equation}

\begin{theorem}[Spectral estimators]
\label{thm:spectral}
Assume that {\bf [H\ref{assum:Qstarprime}']} and {\bf [H\ref{assum:stationary}]-[H\ref{assum:linearly:ind}]} hold. Assume also that for all $x\in\X$, $\fstar_x\in\mathrm{L}^{2}(\Y,\L)$. Then, there exist positive constant ${u}(\Q^{\star})$, ${\mathcal C}(\Q^{\star},\F^{\star})$ and ${\mathbf N}(\Q^{\star},\F^{\star})$ such that for any $u\geq{u}(\Q^{\star})$, any $\delta\in(0,1)$,  any $M\geq M_{\F^{\star}}$, there exists a permutation $\tau_{M}\in\mathcal S_{K}$ such that the spectral method estimators $\fhat_{M,x}$, $\pihat$ and $\Qhat$ (see Algorithm~\ref{alg:Spectral}) satisfy, for any $p\geq {\mathbf N}(\Q^{\star},\F^{\star})\eta_3(\Phi_{M})^{2}u(-\log\delta)/\delta^{2}$, with probability greater than $1-2\delta-4e^{-u}$, 
\eq
\notag
\max_{x\in\X}\lVert f^{\star}_{M,x}-\hat f_{M,\tau_{M}(x)}\lVert_{2}\leq {\mathcal C}(\Q^{\star},\F^{\star})\frac{\sqrt{-\log\delta}}{\delta}\frac{\eta_3(\Phi_{M})}{\sqrt p}\sqrt u\,,
\qe
\eq
\notag
\lVert\pi^{\star}-\mathbb P_{\tau_{M}}\hat\pi\lVert_{2}
\leq
 {\mathcal C}(\Q^{\star},\F^{\star})\frac{\sqrt{-\log\delta}}{\delta}\frac{\eta_3(\Phi_{M})}{\sqrt p}\sqrt u\,,
\qe
\eq
\notag
\lVert\Q^{\star}-\mathbb P_{\tau_{M}}\hat\Q \mathbb P_{\tau_{M}}^{\top}\lVert
\leq
 {\mathcal C}(\Q^{\star},\F^{\star})\frac{\sqrt{-\log\delta}}{\delta}\frac{\eta_3(\Phi_{M})}{\sqrt p}\sqrt u\,.
\qe
\end{theorem}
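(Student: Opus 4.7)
The plan is to decompose the proof into two separate stages: (i) a concentration step that controls the deviation of each empirical quantity built in \textbf{[Step 1]} of Algorithm~\ref{alg:Spectral} from its population counterpart, and (ii) a deterministic perturbation step that traces how these deviations propagate through \textbf{[Steps 2--9]} to the final estimators $\fhat_{M,x}$, $\pihat$ and $\Qhat$.

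For the concentration step, I would establish that, with probability at least $1-2\delta$, the four quantities $\lVert \hatl-\l\lVert_2$, $\lVert \hatN-\N\lVert_F$, $\lVert \hatPj-\Pj\lVert_F$ and a suitable tensor norm of $\hatM-\M$ are all of order $\eta_{3}(\Phi_M)\sqrt{(-\log\delta)/p}$. Under {\bf [H\ref{assum:Qstarprime}']}--{\bf [H\ref{assum:stationary}]} the hidden chain is uniformly ergodic, hence $(Y_j)_{j\ge 1}$ is geometrically mixing, which allows a Bernstein-type inequality for Markov functionals applied entry-wise and then combined via a union bound; the definition \eqref{eq:def:eta3} of $\eta_3(\Phi_M)$ is designed precisely so that the resulting supremum over triples $(a,b,c)$ is controlled by $\eta_3(\Phi_M)$. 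The threshold $p\geq \mathbf{N}(\Q^{\star},\F^{\star})\eta_3(\Phi_M)^2 u(-\log\delta)/\delta^{2}$ is what ensures these concentration errors are small relative to the relevant spectral gaps.

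For the perturbation step, I would propagate the concentration through each algebraic operation of the algorithm. First, Wedin's $\sin\Theta$ theorem applied to $\hatPj \approx \Pj$ gives $\hat\U \approx \U R$ for some orthogonal $R$, provided $\sigma_K(\Pj)$ is bounded below: this is where {\bf [H\ref{assum:Qstarprime}']}-\ref{assum:Qstar:rankprime}) and {\bf [H\ref{assum:linearly:ind}]} enter, together with the threshold $M\ge M_{\F^{\star}}$ which guarantees that the projected emission matrix $\O$ keeps rank $K$. Then $\hat\U^\top\hatPj\hat\U$ is close to its population analog and invertible, so standard inverse-perturbation bounds control $\Bhat(b)-\B(b)$ for each $b$. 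Step 4 multiplies by a fixed random orthogonal $\Theta$, yielding $\Chat(x)\approx \Rbf\Lambda(x)\Rbf^{-1}$ for some common invertible $\Rbf$ (by the simultaneous-diagonalization structure inherited from \cite{anandkumar2012method}), and a standard bound on eigenvector perturbation finishes the control of $\hat\Rbf$, hence of $\hat\Lambda$ and of $\hatO=\hat\U\Theta\hat\Lambda$. Parseval then gives $\lVert f^{\star}_{M,x}-\hat f_{M,\tau_M(x)}\lVert_2=\lVert \O(\cdot,x)-\hatO(\cdot,\tau_M(x))\lVert_2$, where the permutation $\tau_M$ arises from the inherent reordering ambiguity of eigenvalues. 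The bounds on $\pihat$ and $\Qhat$ follow by plugging the previously controlled $\hatO$, $\hatl$, $\hatN$ and $\hat\U$ into the closed-form expressions of \textbf{[Steps 8--9]} and using that $\Pi_{\mathrm{TM}}$ is a contraction onto a convex set containing $\Q^{\star}$, so that $\lVert \Q^{\star}-\Pi_{\mathrm{TM}}(\cdot)\lVert_F\le \lVert\Q^{\star}-\cdot\lVert_F$.

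The main obstacle is the eigendecomposition in \textbf{[Step 5]}: eigenvector perturbation is only stable when the eigenvalue gap of $\Chat(1)$ is bounded below, and nothing in the population model forces the eigenvalues of $\mathbf{C}(1)$ themselves to be well separated. This is precisely the role of the random unitary $\Theta$ in \textbf{[Step 4]}: an anti-concentration argument on the spectrum of $\Theta$-randomized matrices shows that, with probability at least $1-\delta$ over the draw of $\Theta$, the smallest eigengap of $\mathbf{C}(1)$ is at least of order $\delta/\kappa(\Qstar,\Fstar)$, which explains both the $1/\delta$ factor and the extra additive probability term $-\delta$ in the statement. Handling this step carefully, with an explicit dependence on the dimensional complexity parameter $\eta_3(\Phi_M)$ that did not appear in the parametric analysis of \cite{anandkumar2012method}, is the new quantitative content beyond the cited reference and the main difficulty of the proof.
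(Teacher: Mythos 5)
Your two-stage architecture --- concentration of the empirical moments of Step~1, then deterministic perturbation through Steps~2--9, with the random unitary $\Theta$ supplying the eigenvalue separation that costs the $1/\delta$ factor and the additive $\delta$ in the probability --- is exactly the architecture of the paper's proof, which routes through the explicit-constant version (Theorem~\ref{thm:ThmMain1}) and a quantitative rereading of \cite{anandkumar2012method} (Theorem~\ref{thm:HSK}). However, two steps as you describe them would not deliver the stated bound. First, the concentration step: an entry-wise Bernstein inequality combined with a union bound over the $M^{3}$ triples $(a,b,c)$ controls $\lVert\hatM-\M\rVert_{F}^{2}$ by a \emph{sum over triples of suprema in $y$}, whereas $\eta_{3}^{2}(\Phi_{M})$ in \eqref{eq:def:eta3} is a \emph{supremum in $y$ of a sum over triples}; the first quantity can be of strictly larger order in $M$ (e.g.\ for a localized wavelet basis), and the union bound injects extra $\log M$ factors. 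The paper instead applies McDiarmid's bounded-differences inequality for Markov chains (via Marton couplings, \cite{paulin2012concentration}) to the scalar functional $(Z_{1},\ldots,Z_{p})\mapsto\lVert\hatM-\M\rVert_{F}$, for which $\eta_{3}(\Phi_{M})/p$ is precisely the bounded-difference constant, together with a variance bound for the expectation expressed through the pseudo-spectral gap; to recover the advertised $\eta_{3}(\Phi_{M})/\sqrt{p}$ rate you should concentrate the Frobenius norm as a single functional rather than its entries.

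Second, the bound on $\pihat$ is not a plug-in computation. Step~9 defines $\pihat$ as the stationary distribution of $\Qhat$ (the vector $\tilde\pi$ of Step~8 is only a surrogate used to build $\Qhat$), so after controlling $\lVert\Qstar-\mathbb P_{\tau}\Qhat\mathbb P_{\tau}^{\top}\rVert$ you still need a quantitative stability estimate for the map sending a transition matrix to its stationary distribution. The paper obtains one by characterizing $\pi$ as the unique solution of the overdetermined linear system whose matrix stacks $\mathrm{Id}_{K}-\Q^{\top}$ over $\one_{K}^{\top}$, whose $K$-th singular value is bounded away from zero by \eqref{eq:KPerron}, and by bounding the perturbation of the corresponding Moore--Penrose pseudo-inverse through Wedin's theorem. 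Without such an argument the second display of the theorem does not follow from the other two. The remainder of your outline --- Wedin's theorem for $\hat\U$, inverse perturbation for $(\hat\U^{\top}\hatPj\hat\U)^{-1}$, Bauer--Fike combined with anti-concentration of $\langle\Theta(\cdot,x),\hat\U^{\top}(\O(\cdot,x_{1})-\O(\cdot,x_{2}))\rangle$ over the draw of $\Theta$, Parseval's identity, and non-expansivity of $\Pi_{\mathrm{TM}}$ --- matches the paper's argument.
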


\begin{corollary}
\label{cor:spectral}
Assume that {\bf [H\ref{assum:Qstarprime}']} and {\bf [H\ref{assum:stationary}]-[H\ref{assum:linearly:ind}]} hold. Assume also that for all $x\in\X$, $\fstar_x\in\mathrm{L}^{2}(\Y,\L)$. Let $M_{p}$ be a sequence of integers tending to infinity and such that $\eta_3(\Phi_{M_p})=o(\sqrt{p/\log p})$. For each $p$, define $\fhat$,
$\Qhat$ and $\pihat$ as the estimators obtained by the spectral algorithm with this choice of $M_p$. Then, there exists a sequence of permutations $\tau_{p}\in\mathcal S_{K}$ such that
\[
\E\big[\max_{x\in\X}\lVert f^{\star}_{M_{p},x}-\hat f_{\tau_{p}(x)}\lVert_{2}\big]\ \vee\ 
\E\big[ \lVert \Q^{\star}-\mathbb P_{\tau_{p}}\hat\Q\mathbb P_{\tau_{p}}^{\top}\lVert\big]\ \vee\ 
\E\big[ \lVert\pi^{\star}-\mathbb P_{\tau_{p}}\hat\pi\lVert_{2}\big]
=O\big(\eta_3(\Phi_{M_p})\sqrt {\log p/p}\big)=o(1).
\]
Here, the expectations are with respect to the observations and to the random unitary matrix drawn at {\bf [Step 4]} of  Algorithm~\ref{alg:Spectral}.
\end{corollary}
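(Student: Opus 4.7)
The plan is to apply Theorem~\ref{thm:spectral} with $u$ and $\delta$ chosen as functions of $p$, and to convert the resulting high-probability bound into a bound in expectation by handling separately the ``good event'' on which the spectral algorithm succeeds and its complement.

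First, I would fix $u_p \eqdef \log p$ and let $\delta_p$ tend to zero. Under the corollary's assumption $\eta_3(\Phi_{M_p}) = o(\sqrt{p/\log p})$, the sample-size requirement $p \geq \mathbf{N}(\Qstar,\Fstar)\eta_3(\Phi_{M_p})^2 u_p(-\log\delta_p)/\delta_p^2$ of Theorem~\ref{thm:spectral} is satisfied for all $p$ large enough as soon as $\delta_p$ does not decay too fast. The theorem then yields a permutation $\tau_p \in \mathcal{S}_K$ and an event $\mathcal{G}_p$ of probability at least $1 - 2\delta_p - 4/p$ on which
\[
\max_{x\in\X} \|f^\star_{M_p,x} - \fhat_{\tau_p(x)}\|_2 \ \vee\ \|\Qstar - \mathbb{P}_{\tau_p}\Qhat\mathbb{P}_{\tau_p}^\top\| \ \vee\ \|\pistar - \mathbb{P}_{\tau_p}\pihat\|_2 \ \leq\ C_{\delta_p}\,\eta_3(\Phi_{M_p})\sqrt{\log p/p},
\]
where $C_{\delta_p}$ absorbs the factor $\sqrt{-\log\delta_p}/\delta_p$ coming from the theorem.

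Second, I would convert to expectation by splitting each quantity with respect to $\mathds{1}_{\mathcal{G}_p}$ and $\mathds{1}_{\mathcal{G}_p^c}$. On $\mathcal{G}_p$ the bound above gives the desired rate $O(\eta_3(\Phi_{M_p})\sqrt{\log p/p})$ directly, provided $\delta_p$ is chosen so that $C_{\delta_p}$ remains bounded (or grows only by an absorbable factor). On $\mathcal{G}_p^c$, I would use universal deterministic bounds: $\|\pistar - \mathbb{P}_{\tau_p}\pihat\|_2 \le 2$, since both are probability vectors on $\X$, and $\|\Qstar - \mathbb{P}_{\tau_p}\Qhat\mathbb{P}_{\tau_p}^\top\| \le 2\sqrt{K}$, since $\Qhat$ is projected onto the convex set of transition matrices in \textbf{Step 9} of Algorithm~\ref{alg:Spectral}. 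The bad-event contribution to the expectation is therefore $O(\delta_p + 1/p)$ for these two quantities, which can be made $o(\eta_3(\Phi_{M_p})\sqrt{\log p/p})$ by a suitable choice of $\delta_p$.

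The main obstacle is handling the $\mathcal{G}_p^c$-contribution to $\E[\max_x \|f^\star_{M_p,x} - \fhat_{\tau_p(x)}\|_2]$: unlike $\Qhat$ and $\pihat$, the spectral estimator $\fhat$ has no obvious universal $\mathrm{L}^2$-bound, since its norm equals $\|\hat\Lambda(\cdot,\tau_p(x))\|_2$, which depends on the conditioning of the empirical matrix $\hatPj$ in \textbf{Step 5} of Algorithm~\ref{alg:Spectral}. The plan here is to derive a deterministic polynomial-in-$p$ upper bound on $\|\fhat_{M_p,x}\|_2$ by inspecting the algorithm (controlling the operator norms of the matrix inverses via the smallest singular value of $\hat U^\top \hatPj \hat U$), and to compensate by choosing $\delta_p$ with fast enough polynomial decay so that the product of this crude bound with $\mathbb{P}(\mathcal{G}_p^c)$ remains of the prescribed order. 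The delicate point is calibrating $\delta_p$ so that the blow-up of $\sqrt{-\log\delta_p}/\delta_p$ in the main term is not too large while $\mathbb{P}(\mathcal{G}_p^c)$ is small enough for this argument to close; the assumption $\eta_3(\Phi_{M_p}) = o(\sqrt{p/\log p})$ provides precisely the margin required to carry out this balance and conclude.
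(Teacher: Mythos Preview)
Your proposal has a genuine calibration gap that prevents it from closing at the stated rate. With $u_p=\log p$, the bound delivered by Theorem~\ref{thm:spectral} on the good event $\mathcal G_p$ is
\[
\mathcal C(\Qstar,\F^{\star})\,\frac{\sqrt{-\log\delta_p}}{\delta_p}\,\eta_3(\Phi_{M_p})\sqrt{\log p/p}\,,
\]
so obtaining the claimed rate $O\big(\eta_3(\Phi_{M_p})\sqrt{\log p/p}\big)$ requires $\sqrt{-\log\delta_p}/\delta_p=O(1)$, i.e.\ $\delta_p$ bounded away from $0$. But then $\P(\mathcal G_p^c)\ge 2\delta_p$ does not vanish, and the bad-event contribution cannot be made negligible. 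Conversely, letting $\delta_p\to 0$ --- and a fortiori polynomially, as you propose for the $\fhat$ term --- forces $\sqrt{-\log\delta_p}/\delta_p\to\infty$, so the good-event bound already overshoots the target rate. These two constraints are incompatible for every choice of $\delta_p$; the hypothesis $\eta_3(\Phi_{M_p})=o(\sqrt{p/\log p})$ only guarantees the sample-size condition of Theorem~\ref{thm:spectral} and gives no additional margin here. Your difficulty with bounding $\|\fhat\|_2$ on $\mathcal G_p^c$ is thus secondary: the argument fails earlier, at the level of the main term.

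The paper takes a different route that avoids both issues. It fixes $\delta_p$ through $(-\log\delta_p)/\delta_p^2=\log p$, so that $\sqrt{-\log\delta_p}/\delta_p=\sqrt{\log p}$ is \emph{exactly} the factor in the target rate, and then keeps $u$ as a free parameter: for each $u$ in the admissible range the error is at most $\mathcal C\,\eta_3(\Phi_{M_p})\sqrt{\log p/p}\,\sqrt u$ with probability at least $1-4e^{-u}-2\delta_p$. The expectation is then controlled via the tail identity $\E[Z^2]=\int_0^\infty \P(Z\ge\sqrt u)\,\rmd u$ and the pointwise limit $\limsup_p\P(\cdot)\le 4e^{-u}$, which integrates to a finite constant. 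In particular, there is no single ``bad event'' to handle by crude deterministic bounds: the exponential tail in $u$ does the work, and this is precisely why no a priori bound on $\|\fhat\|_2$ is needed. The idea you are missing is to integrate over $u$ rather than to fix $u_p=\log p$ and split on one event.
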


\begin{proof}
Apply Theorem \ref{thm:spectral} where, for each $p$, we define $\delta_{p}$ such that $(-\log \delta_{p})/\delta_{p}^{2}\eqdef\log p$. $\delta_{p}$ goes to $0$ and $M_{p}$ goes to infinity as $p$ tends to infinity so that for any large enough $p$, $M_{p}\geq M_{\F^{\star}}$.  Let $\tau_{p}$ the permutation $\tau_{M_{p}}$ given by Theorem~\ref{thm:spectral}. Then, for all $p/({\mathbf N}(\Q^{\star},\F^{\star})\eta_3(\Phi_{M_p})^{2} \log p)\geq u\geq u(\Q^{\star})$, with probability $1-4e^{-u}-2\delta_{p}$,
\[
\max_{x\in\X}\lVert f^{\star}_{M,x}-\hat f_{M,\tau_{M}(x)}\lVert_{2}\vee\lVert\pi^{\star}-\mathbb P_{\tau_{p}}\hat\pi\lVert_{2}\vee\lVert\Q^{\star}-\mathbb P_{\tau_{p}}\hat\Q \mathbb P_{\tau_{p}}^{\top}\lVert
\leq
 {\mathcal C}(\Q^{\star},\F^{\star})\eta_3(\Phi_{M_p})\sqrt{{\log p}/{p}}\sqrt{u}\,.
\]
It yields
\begin{align*}
\limsup_{p\rightarrow +\infty}\E&\left[ \frac{ p}{\eta_3(\Phi_{M_p})^2\log p}\lVert \Q^{\star}-\mathbb P_{\tau_{p}}\hat\Q\mathbb P_{\tau_{p}}^{\top}\lVert^{2}\right] \\
&\leq 
{\mathcal C}(\Q^{\star},\F^{\star})^{2}\int_{0}^{+\infty}\limsup_{p\rightarrow +\infty}\P\left( \frac{\sqrt p}{{\mathcal C}(\Q^{\star},\F^{\star})\eta_3(\Phi_{M_p})\sqrt{\log p} }\lVert \Q^{\star}-\mathbb P_{\tau_{p}}\hat\Q\mathbb P_{\tau_{p}}^{\top}\lVert\geq  \sqrt{u}\right)\rmd u \\
&\leq 
{\mathcal C}(\Q^{\star},\F^{\star})^{2} u(\Q^{\star})+{\mathcal C}(\Q^{\star},\F^{\star})^{2}\int_{x(\Q^{\star})}^{+\infty}4e^{-u}\rmd u <+\infty\,.
\end{align*}
The proof is similar for the other terms.
\end{proof}

\section{Experimental results}
\label{sec:exp}
We have run several numerical experiments to assess the efficiency of our method. We consider $K=2$ emission laws of beta distributions with parameters $(2,5)$ and $(4,3)$. In all our experiments, the transition matrix $\Qstar$ is given by 
\[
\Qstar \eqdef \begin{pmatrix}0.4 & 0.6\\ 0.8 & 0.2 \end{pmatrix}\eqsp.
\]
We observe a sequence of $n=6\times 10^{4}$ variables $(Y_{i})_{i=1}^{n}$. As projection basis, we have considered the histogram basis or the trigonometric basis. The minimax adaptive procedure described in \cite{decastro:gassiat:lacour:2015} gives an estimation of $\Qstar$ and of the emission laws. Using the slope heuristic \cite{BMM12}, we find that the selected size of the model is $\hat M=11$ in the histogram case and $\hat M=13$ in the trigonometric case. Figure \ref{fig:trigoEstimationEmission} presents the adaptive estimation of the emission laws. From these estimates, we compute an estimation of the marginal smoothing probabilities using the forward-backward algorithm. The results are presented in Figure~\ref{fig:MarginalSmoothing}.
\begin{figure}
\begin{center}
\includegraphics[width=0.47\textwidth]{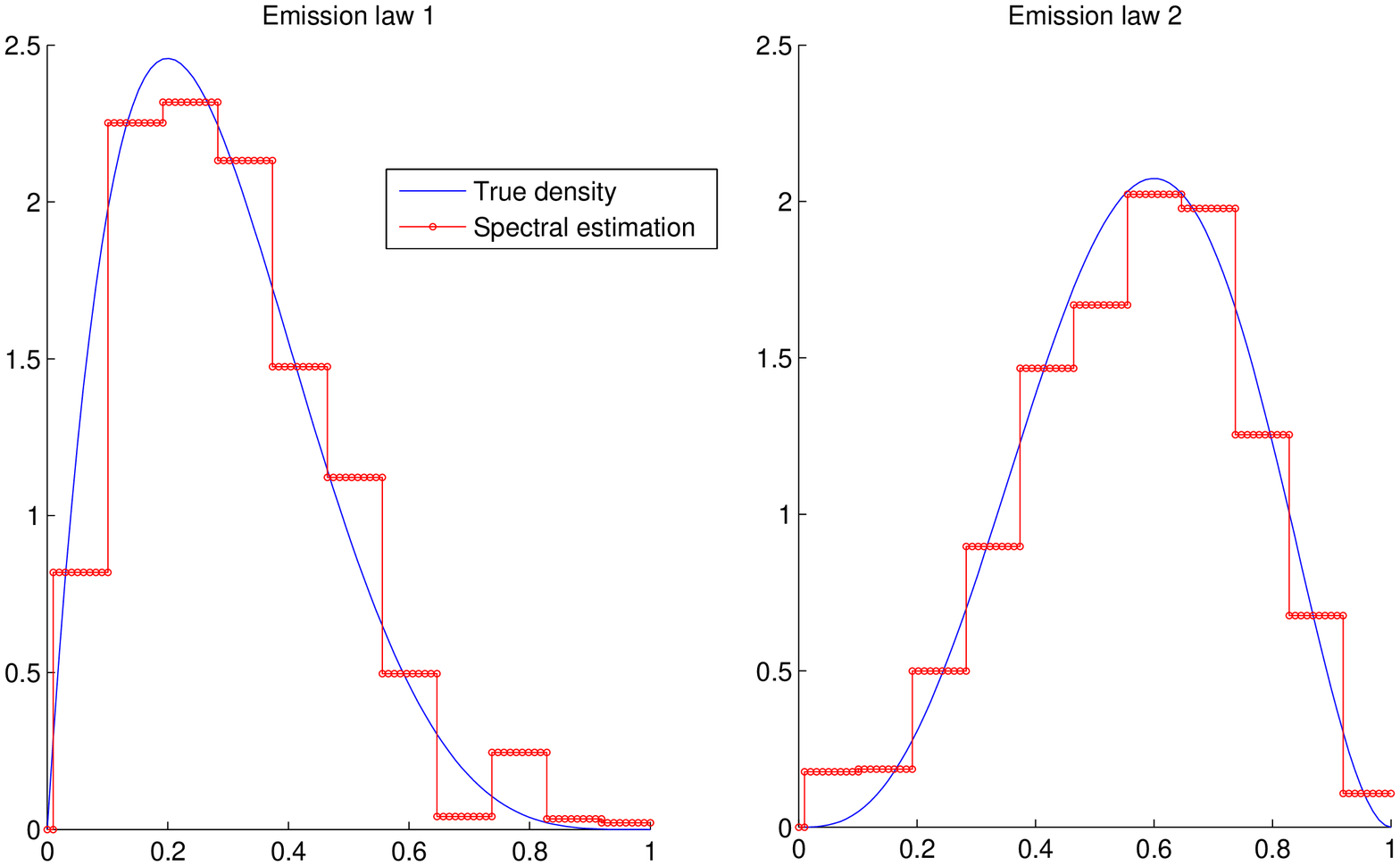}\hspace*{0.05\textwidth}
\includegraphics[width=0.47\textwidth]{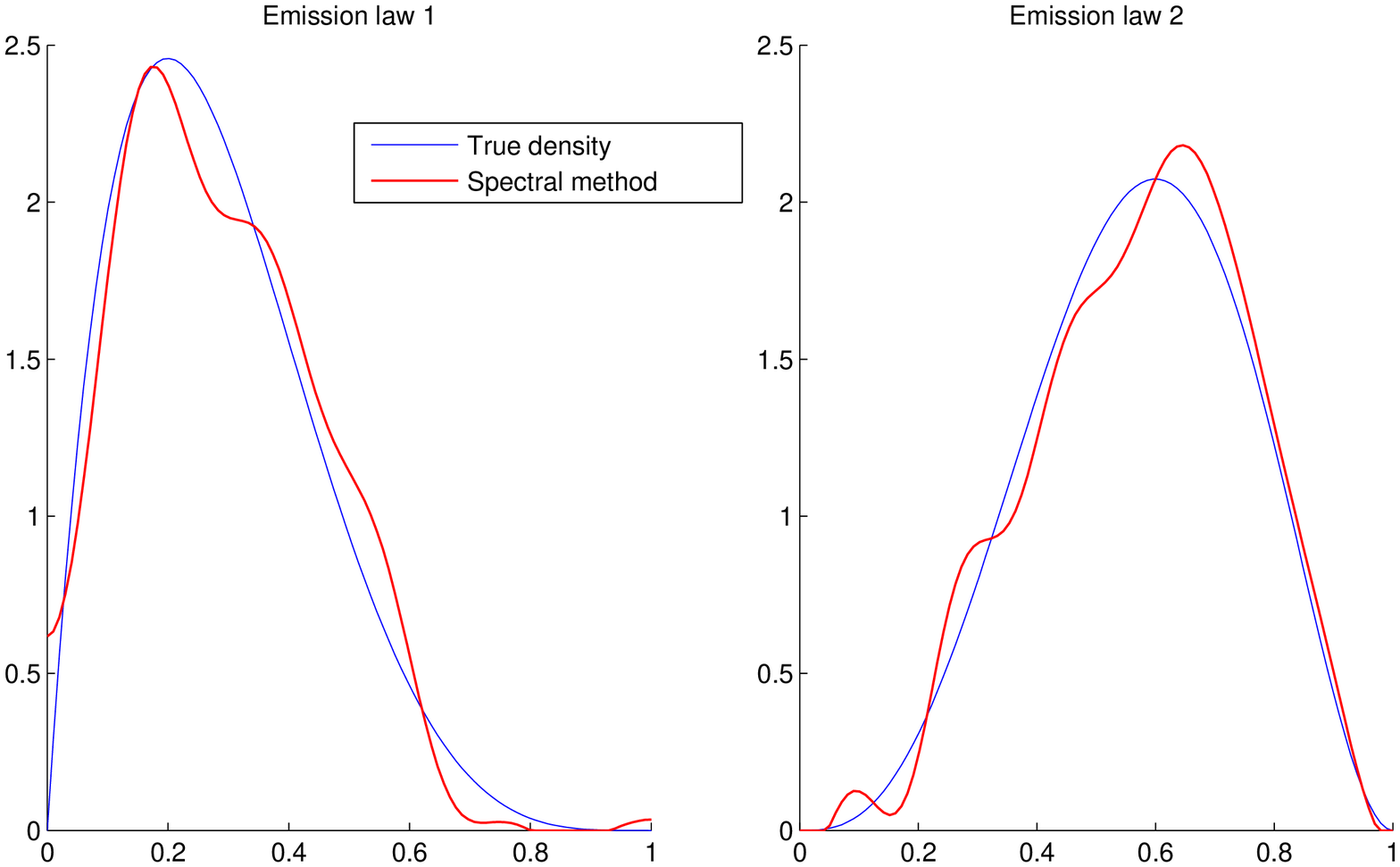}
\end{center}
\caption{Estimation of emission laws of beta distributions with parameters $(2,5)$ and $(4,3)$ using the spectral method. The projection basis is the histogram basis (left panel) or the trigonometric basis (right panel).}
\label{fig:trigoEstimationEmission}
\end{figure}

\begin{figure}
\begin{center}
\includegraphics[width=0.45\textwidth]{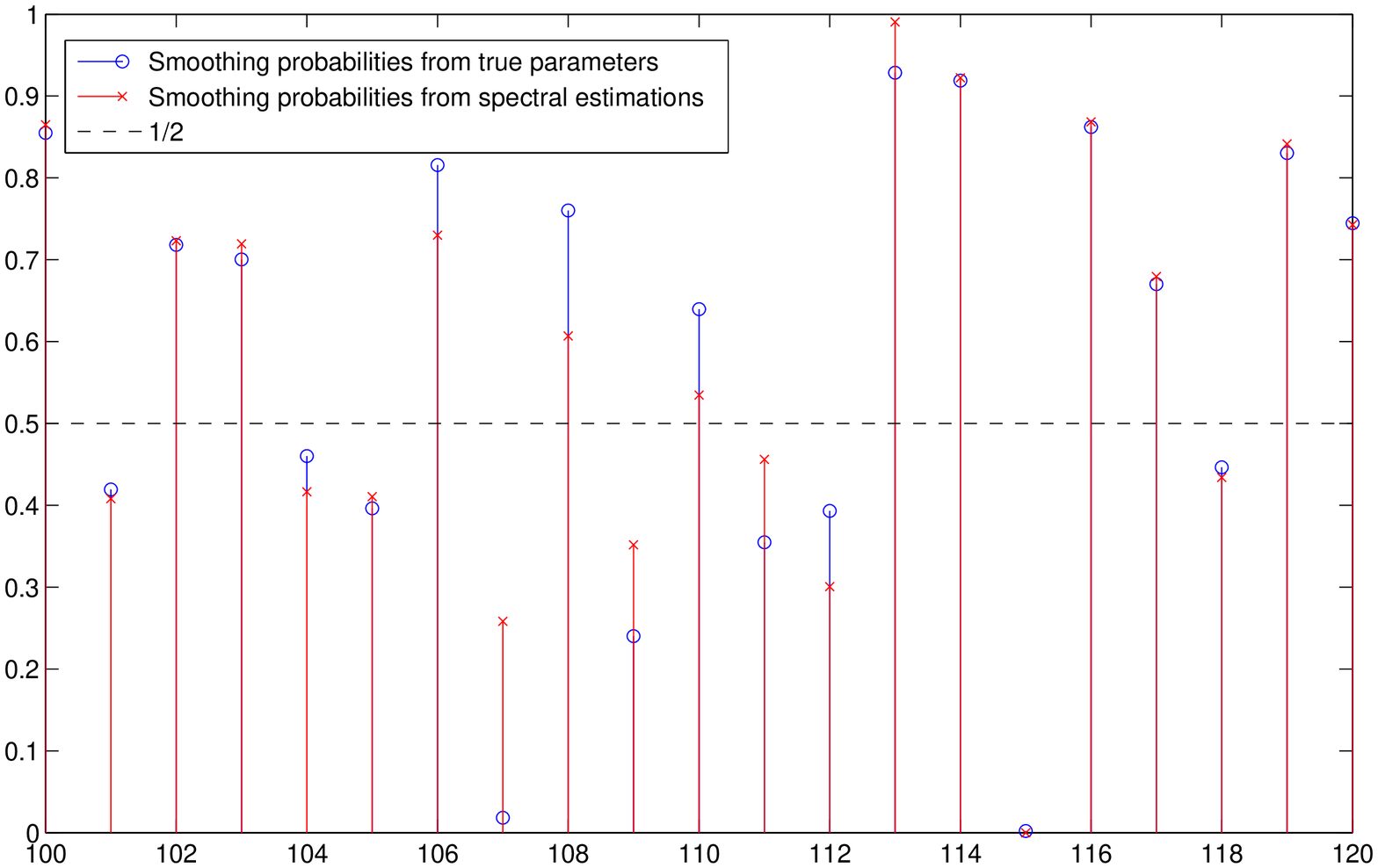}
\includegraphics[width=0.45\textwidth]{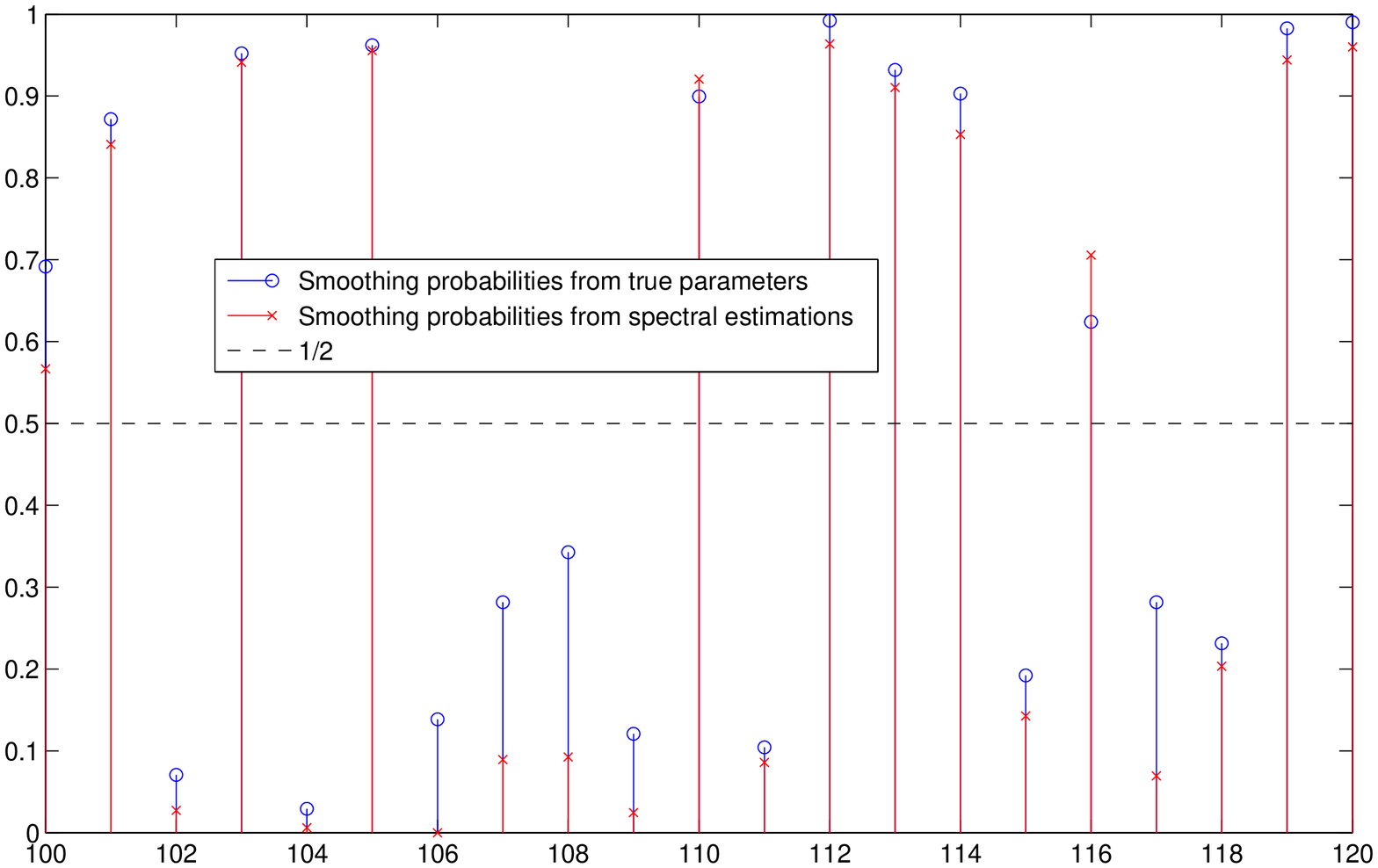}
\end{center}
\caption{Marginal smoothing probabilities obtained with the forward-backward algorithm combined with the spectral method using projection of the emission laws on the histogram basis (top panel) or the trigonometric basis (bottom panel).}
\label{fig:MarginalSmoothing}
\end{figure}

%\begin{table}
%\begin{center}
%\begin{tabular}{c||c|c}
% & Spectral & Least squares \\
%  \hline Histogram & 0.0442 & 0.0107 \\
%   Trigonometric & 0.0421 & 0.0230 \\
%\end{tabular}
%\end{center}
%\caption{The mean absolute error on marginal smoothing probability estimations when the input latent variables are estimated by the spectral estimator or by the least squares estimator. The projection basis are the histogram basis or the trigonometric basis. }
%\label{tab:errors}
%\end{table}

 %Furthermore, we can compute the mean absolute error: the $\ell_{1}$ distance between the estimated marginal smoothing probability and the true marginal smoothing probability divided by $n$. These errors are reported in Table \ref{tab:errors}. We conclude that these numerical experiments assert the consistency of our procedure. Note that using the least squares estimator improves the performances upon the spectral estimator.
 
\appendix

\section{Control of the filtering error - Proof of Proposition~\ref{prop:filtering}}
\label{sec:filtResults}
Let $y_{1:n}\in\Y^n$. The aim of this section consists in establishing that the total variation error between $\filtstar{k}(\cdot,y_{1:n})$ and its approximations based on $\Qhat$ and $\fhat$ is bounded uniformly in time $k$. Before stating the main result, we introduce a standard decomposition of the filtering error $\filtstar{k}(\cdot,y_{1:k}) -\filthat{k}(\cdot,y_{1:k})$. For all $k\ge 1$, let $\Fstar_{k,y_k}$ be the forward kernel at time $k$ and $\Fhat_{k,y_k}$ its approximation, defined, for all $\nu\in\mathcal{P}(\X)$, as:
\[
\Fstar_{k,y_k}\nu(x) \eqdef \frac{\sum_{x'\in\X}\Qstar(x',x)\fstar_x(y_k)\nu(x')}{\sum_{x',x''\in\X}\Qstar(x',x'')\fstar_{x''}(y_k)\nu(x')}\,,
\]
and
\[
\Fhat_{k,y_k}\nu(x) \eqdef \frac{\sum_{x'\in\X}\Qhat(x',x)\fhat_x(y_k)\nu(x')}{\sum_{x',x''\in\X}\Qhat(x',x'')\fhat_{x''}(y_k)\nu(x')}\eqsp.
\]
Clearly, for all $y_{1:n}\in\Y^n$ and $2\le k \le n$, $\filtstar{k}(\cdot,y_{1:k}) = \Fstar_{k,y_k}\filtstar{k-1}(\cdot,y_{1:k-1})$ and $\filthat{k} (\cdot,y_{1:k})= \Fhat_{k,y_k}\filthat{k-1}(\cdot,y_{1:k-1})$. The filtering error is usually written as a sum of one step errors. For all $k\ge 2$,
\begin{align}
\filtstar{k}(\cdot,y_{1:k}) -\filthat{k}(\cdot,y_{1:k}) &= \Fstar_{k,y_k}\filtstar{k-1}(\cdot,y_{1:k-1}) - \Fhat_{k,y_k}\filthat{k-1}(\cdot,y_{1:k-1})\nonumber\\ 
%&= \Fstar_{k,y_k}\filtstar{k-1} - \Fstar_{k,y_k}\filthat{k-1} + \Fstar_{k,y_k}\filthat{k-1} - \Fhat_{k,y_k}\filthat{k-1}\eqsp,\nonumber\\
&= \sum_{\ell=1}^{k-1}\Delta_{k,\ell}(y_{\ell:k}) + \Fstar_{k,y_k}\filthat{k-1}(\cdot,y_{1:k-1}) - \Fhat_{k,y_k}\filthat{k-1}(\cdot,y_{1:k-1})\eqsp,\label{eq:error:filt}
\end{align}
with $\Fstar_{1,y_1}\filthat{0} = \filtstar{1}(\cdot,y_1)$ and 
\[
\Delta_{k,\ell}(y_{\ell:k}) \eqdef \Fstar_{k,y_k}\ldots\Fstar_{\ell+1,y_{\ell+1}}\Fstar_{\ell,y_{\ell}}\filthat{\ell-1}(\cdot,y_{1:\ell-1}) -\Fstar_{k,y_k}\ldots\Fstar_{\ell+1,y_{\ell+1}}\filthat{\ell}(\cdot,y_{\ell})\eqsp.
\]
Let $\beta^{\star}_{\ell|k}[y_{\ell+1:k}]$ and $\Fstar_{\ell|k}[y_{\ell:k}]$ be the backward functions  and the forward smoothing transition matrix as defined in \cite[Chapter~3]{cappe:moulines:ryden:2005},
\begin{align}
\beta^{\star}_{\ell|k}[y_{\ell+1:k}](x_{\ell}) &\eqdef \sum_{x_{\ell+1:k}}\Qstar(x_{\ell},x_{\ell+1})\fstar_{x_{\ell+1}}(y_{\ell+1})\ldots \Qstar(x_{k-1},x_{k})\fstar_{x_{k}}(y_{k})\eqsp,\label{eq:def:beta}\\
\Fstar_{\ell|k}[y_{\ell:k}](x_{\ell-1},x_\ell) &\eqdef \frac{\beta^{\star}_{\ell|k}[y_{\ell+1:k}](x_{\ell})\Qstar(x_{\ell-1},x_{\ell})\fstar_{x_{\ell}}(y_{\ell})}{\sum_{x\in\X}\beta^{\star}_{\ell|k}[y_{\ell+1:k}](x)\Qstar(x_{\ell-1},x)\fstar_{x}(y_{\ell})}\label{eq:def:Fstar}\eqsp.
\end{align}
In the sequel, the dependency on the observations may be dropped to simplify notations. By \cite[Chapter~4]{cappe:moulines:ryden:2005}, for any probability distribution $\nu$, $\Fstar_k\ldots\Fstar_{\ell+1}\nu = \nu_{\ell|k}\Fstar_{\ell+1|k}\ldots\Fstar_{k|k}$, where $\nu_{\ell|k} \propto \beta^{\star}_{\ell|k}\nu$. Therefore, the filtering error \eqref{eq:error:filt} is given by:
\begin{equation}
\label{eq:filt:decomp}
\filtstar{k} -\filthat{k} = \sum_{\ell=1}^{k-1}\left(\mu^{\star}_{\ell|k}\Fstar_{\ell+1|k}\ldots\Fstar_{k|k} -\widehat{\mu}_{\ell|k}\Fstar_{\ell+1|k}\ldots\Fstar_{k|k}\right) + \Fstar_k\filthat{k-1} - \Fhat_k\filthat{k-1}\eqsp,
\end{equation}
where $\mu^{\star}_{\ell|k} \propto \beta^{\star}_{\ell|k}\Fstar_{\ell}\filthat{\ell-1}$ and $\widehat{\mu}_{\ell|k} \propto \beta^{\star}_{\ell|k}\filthat{\ell}$. By {\bf [H\ref{assum:Qstar}]-\ref{assum:Qstar:deltastar})}, the transition matrix $\Fstar_{k|n}$ can be lower bounded uniformly in its first component:
\[ 
\Fstar_{\ell|k}(x,x')\ge \frac{\dstar}{1-\dstar}\frac{\beta^{\star}_{\ell|k}[y_{\ell+1:k}](x')\fstar_{x'}(y_{\ell})}{\sum_{z\in\X}\beta^{\star}_{\ell|k}[y_{\ell+1:k}](z)\fstar_{z}(y_{\ell})}\eqsp.
\]
By \cite[Chapter~4]{cappe:moulines:ryden:2005}, this allows to write, 
\begin{equation}
\label{eq:forget:filt}
\left\|\mu^{\star}_{\ell|k}\Fstar_{\ell+1|k}\ldots\Fstar_{k|k} -\widehat{\mu}_{\ell|k}\Fstar_{\ell+1|k}\ldots\Fstar_{k|k}\right\|_{\mathrm{tv}}\le \rho_{\star}^{k-\ell}\|\mu^{\star}_{\ell|k}-\widehat{\mu}_{\ell|k}\|_{\mathrm{tv}}\eqsp.
\end{equation}
Eq. \eqref{eq:forget:filt} is the crucial step to obtain the upper bound for the filtering error stated in Proposition~\ref{prop:filtering}.
By \eqref{eq:filt:decomp} and \eqref{eq:forget:filt},
\[
\|\filtstar{k} -\filthat{k}\|_{\mathrm{tv}}\le \sum_{\ell=1}^{k-1}\rho_{\star}^{k-\ell}\left\|\mu^{\star}_{\ell|k}-\widehat{\mu}_{\ell|k}\right\|_{\mathrm{tv}} + \left\|\Fstar_k\filthat{k-1} - \Fhat_k\filthat{k-1}\right\|_{\mathrm{tv}}\eqsp.
\]
For all $1\le \ell\le k-1$ and all bounded function $h$ on $\X$, $\left|\mu^{\star}_{\ell|k}(h)-\widehat{\mu}_{\ell|k}(h)\right|\le T_1 + T_2$ where
\begin{align*}
T_1 &\eqdef \left|\frac{\sum_{x\in \X}\beta^{\star}_{\ell|k}[y_{\ell+1:k}](x)h(x)\left[\Fstar_{\ell}\filthat{\ell-1}(x)-\Fhat_{\ell}\filthat{\ell-1}(x)\right]}{\sum_{x\in \X}\beta^{\star}_{\ell|k}[y_{\ell+1:k}](x)\Fstar_{\ell}\filthat{\ell-1}(x)}\right| \eqsp,\\
T_2 &\eqdef  \left|\frac{\sum_{x\in \X}\beta^{\star}_{\ell|k}[y_{\ell+1:k}](x)h(x)\Fhat_{\ell}\filthat{\ell-1}(x)}{\sum_{x\in \X}\beta^{\star}_{\ell|k}[y_{\ell+1:k}](x)\Fhat_{\ell}\filthat{\ell-1}(x)}\right| \cdot \left|\frac{\sum_{x\in \X}\beta^{\star}_{\ell|k}[y_{\ell+1:k}](x)\left[\Fstar_{\ell}\filthat{\ell-1}(x)-\Fhat_{\ell}\filthat{\ell-1}(x)\right]}{\sum_{x\in \X}\beta^{\star}_{\ell|k}[y_{\ell+1:k}](x)\Fstar_{\ell}\filthat{\ell-1}(x)}\right| \eqsp.
\end{align*}
Both $T_1$ and $T_2$ are upper bounded by the same term so that
\[
T_1+T_2 \le 2\frac{\|h\|_{\infty}\cdot\|\beta^{\star}_{\ell|k}[y_{\ell+1:k}]\|_{\infty}}{\inf_{x\in\X} \beta^{\star}_{\ell|k}[y_{\ell+1:k}](x)}\|\Fstar_{\ell}\filthat{\ell-1}-\Fhat_{\ell}\filthat{\ell-1}\|_{\mathrm{tv}}\eqsp.
\]
By \eqref{eq:def:beta}, for all $x\in\X$, $\beta^{\star}_{\ell|k}[y_{\ell+1:k}](x)\le (1-\dstar)\sum_{x_{k+1:n}}\fstar_{x_{k+1}}(y_{k+1})\ldots \Qstar(x_{n-1},x_{n})\fstar_{x_{n}}(y_{n})$ and $\beta^{\star}_{\ell|k}[y_{\ell+1:k}](x)\ge \dstar\sum_{x_{k+1:n}}\fstar_{x_{k+1}}(y_{k+1})\ldots \Qstar(x_{n-1},x_{n})\fstar_{x_{n}}(y_{n})$, showing that
\[
T_1+T_2 \le 2\|h\|_{\infty}\left(\frac{1-\dstar}{\dstar}\right)\|\Fstar_{\ell}\filthat{\ell-1}-\Fhat_{\ell}\filthat{\ell-1}\|_{\mathrm{tv}}\eqsp.
\]
Now, for all $2\le \ell\le k$ and all bounded function $h$ on $\X$, $\left|\Fstar_{\ell}\filthat{\ell-1}(h)-\Fhat_{\ell}\filthat{\ell-1}(h)\right|\le R_1 + R_2$, where
\begin{align*}
R_1 &\eqdef \left|\frac{\sum_{x,x'\in\X}\filthat{\ell-1}(x)\left[\Qstar(x,x')\fstar_{x'}(y_{\ell})-\Qhat(x,x')\fhat_{x'}(y_{\ell})\right]h(x')}{\sum_{x,x'\in\X}\filthat{\ell-1}(x)\Qstar(x,x')\fstar_{x'}(y_{\ell})}\right| \eqsp,\\
R_2 &\eqdef  \left|\frac{\sum_{x,x'\in\X}\filthat{\ell-1}(x)\Qhat(x,x')\fhat_{x'}(y_{\ell})h(x')}{\sum_{x,x'\in\X}\filthat{\ell-1}(x)\Qhat(x,x')\fhat_{x'}(y_{\ell})}\right|\\
&\hspace{5cm}\times\left|\frac{\sum_{x,x'\in\X}\filthat{\ell-1}(x)\left[\Qstar(x,x')\fstar_{x'}(y_{\ell})-\Qhat(x,x')\fhat_{x'}(y_{\ell})\right]}{\sum_{x,x'\in\X}\filthat{\ell-1}(x)\Qstar(x,x')\fstar_{x'}(y_{\ell})}\right| \eqsp.
\end{align*}
Then, 
\begin{align*}
R_1&\le \left(\sum_{x,x'\in\X}\filthat{\ell-1}(x)\Qstar(x,x')\fstar_{x'}(y_{\ell})\right)^{-1}\sum_{x,x'\in\X}\filthat{\ell-1}(x)\left|\Qstar(x,x')\fstar_{x'}(y_{\ell})-\Qhat(x,x')\fhat_{x'}(y_{\ell})\right|h(x')\eqsp,\\
&\le \left(\sum_{x,x'\in\X}\filthat{\ell-1}(x)\Qstar(x,x')\fstar_{x'}(y_{\ell})\right)^{-1}\sum_{x,x'\in\X}\filthat{\ell-1}(x)\left|\Qstar(x,x')-\Qhat(x,x')\right|\fstar_{x'}(y_{\ell})h(x') \\
&\hspace{1.5cm}+ \left(\sum_{x,x'\in\X}\filthat{\ell-1}(x)\Qstar(x,x')\fstar_{x'}(y_{\ell})\right)^{-1}\sum_{x,x'\in\X}\filthat{\ell-1}(x)\Qhat(x,x')\left|\fstar_{x'}(y_{\ell})-\fhat_{x'}(y_{\ell})\right|h(x')\eqsp,\\
& \le \|h\|_{\infty}\left[\|\Qstar-\Qhat\|_{F}/\delta^{\star} + c_{\star}^{-1}(y_{\ell})\max_{x\in\X}\left|f^{\star}_{x}(y_{\ell})-\hat{f}_{x}(y_{\ell})\right|\right]\eqsp,
\end{align*}
where $c_{\star}$ is defined in \eqref{eq:def:cstar}. The same upper bound holds for $R_2$. In the case $\ell=1$, 
\[
\left\|\Fstar_{1}\filthat{0}-\filthat{1}\right\|_{\mathrm{tv}}\le \left\|\filtstar{1}-\filthat{1}\right\|_{\mathrm{tv}}\le2\left[\left\|\pistar-\pihat\right\|_{2}/\delta^{\star} + c_{\star}^{-1}(y_{1})\max_{x\in\X}\left|f^{\star}_{x}(y_{1})-\hat{f}_{x}(y_{1})\right| \right]\eqsp.
\]
Therefore, the filtering error is upper bounded as follows:
\begin{multline*}
\|\filtstar{k} -\filthat{k}\|_{\mathrm{tv}}\le 4\left(\frac{1-\dstar}{\dstar}\right) \sum_{\ell=2}^{k}\rho_{\star}^{k-\ell}\left[\|\Qstar-\Qhat\|_{F}/\delta^{\star} +c_{\star}^{-1}(y_{\ell})\max_{x\in\X}\left|f^{\star}_{x}(y_{\ell})-\hat{f}_{x}(y_{\ell})\right|\right]\\
+4\left(\frac{1-\dstar}{\dstar}\right)\rho_{\star}^{k-1}\left[\left\|\pistar-\pihat\right\|_{2}/\delta^{\star} + c_{\star}^{-1}(y_1)\max_{x\in\X}\left|f^{\star}_{x}(y_{1})-\hat{f}_{x}(y_{1})\right| \right] \eqsp.
\end{multline*}

\section{Control of the marginal smoothing error - Proof of Proposition~\ref{prop:smoothing}}
\label{sec:smoothResults}
Let $y_{1:n}\in\Y^n$. The aim of this section consists in establishing that the total variation error between $\filtstar{k|n}(\cdot,y_{1:n})$ and its approximations based on $\Qhat$ and $\fhat$ is bounded uniformly in time $k$. Before stating the main result, we display the decomposition of the smoothing error $\filtstar{k|n}(\cdot,y_{1:n}) -\filthat{k|n}(\cdot,y_{1:n})$ depicted in \cite{douc:garivier:moulines:olsson:2011} and used in \cite{dubarry:lecorff:2013} to obtain nonasymptotic upper bounds for the marginal smoothing error when $\filtstar{k|n}(\cdot,y_{1:n})$ is approximated using Sequential Monte Carlo methods. In the sequel, the dependency on the observations may be dropped to simplify notations. For any bounded function $h$ on $\X^{n}$, $\smoothstar{1:n}{n}(h)$ can be written, for any $1\le \ell\le n$
\[
\smoothstar{1:n}{n}(h) = \frac{\smoothstar{1:\ell}{\ell}(L^{\star}_{\ell,n}(\cdot,h))}{\smoothstar{1:\ell}{\ell}(L^{\star}_{\ell,n}(\cdot,\1))}\eqsp,
\]
where $\1$ is the constant function which equals 1 and, for all $x_{1:\ell}\in\X^{\ell}$,
\begin{equation}
\label{eq:Lstar}
L^{\star}_{\ell,n}(x_{1:\ell},h) \eqdef \sum_{x_{\ell+1:n}\in\X^{n-\ell}}\prod_{u=\ell+1}^n\Qstar(x_{u-1},x_u)\fstar_{x_u}(y_u)h(x_{1:n})\eqsp.
\end{equation}
As for the filtering error, the smoothing error can be decomposed as a telescopic sum of one step errors:
\begin{multline}
\smoothhat{1:n}{n}(h) - \smoothstar{1:n}{n}(h) = \sum_{\ell=2}^n\left(\frac{\smoothhat{1:\ell}{\ell}(L^{\star}_{\ell,n}(\cdot,h))}{\smoothhat{1:\ell}{\ell}(L^{\star}_{\ell,n}(\cdot,\1))} - \frac{\smoothhat{1:\ell-1}{\ell-1}(L^{\star}_{\ell-1,n}(\cdot,h))}{\smoothhat{1:\ell-1}{\ell-1}(L^{\star}_{\ell-1,n}(\cdot,\1))}\right)\\
+ \frac{\filthat{1}(L^{\star}_{1,n}(\cdot,h))}{\filthat{1}(L^{\star}_{1,n}(\cdot,\1))} - \frac{\filtstar{1}(L^{\star}_{1,n}(\cdot,h))}{\filtstar{1}(L^{\star}_{1,n}(\cdot,\1))}\eqsp.\label{eq:telescopic:smooth}
\end{multline}
This smoothing error can be written using filtering distributions only by introducing the following backward operators:
\begin{align*}
\mathcal{L}^{\star}_{\ell,n}(x_\ell,h) &\eqdef \sum_{x_{1:\ell-1}} B^{\star}_{\filtstar{\ell-1}}(x_\ell,x_{\ell-1})\ldots B^{\star}_{\filtstar{1}}(x_2,x_{1})L^{\star}_{\ell,n}(x_{1:\ell},h)\eqsp,\\
\widehat{\mathcal{L}}_{\ell,n}(x_\ell,h) &\eqdef \sum_{x_{1:\ell-1}} \widehat{B}_{\filthat{\ell-1}}(x_\ell,x_{\ell-1})\ldots \widehat{B}_{\filthat{1}}(x_2,x_{1})L^{\star}_{\ell,n}(x_{1:\ell},h)\eqsp,
\end{align*}
where for all  $\nu\in\mathcal P(\X)$, $B_{\nu}$ is the backward smoothing kernel given by 
\[
B^{\star}_{\nu}(x,x')\eqdef \frac{\Qstar(x',x)\nu(x')}{\sum_{z\in\X}\Qstar(z,x)\nu(z)}\eqsp.
\]
Then, for all $2\le t\le n$, the one step error at time $\ell$ is given by
\begin{equation}
\label{eq:smooth:via:filt}
\delta_{\ell,n}(h)\eqdef\frac{\smoothhat{1:\ell}{\ell}(L^{\star}_{\ell,n}(\cdot,h))}{\smoothhat{1:\ell}{\ell}(L^{\star}_{\ell,n}(\cdot,\1))} - \frac{\smoothhat{1:\ell}{\ell}(L^{\star}_{\ell,n}(\cdot,h))}{\smoothhat{1:\ell}{\ell}(L^{\star}_{\ell,n}(\cdot,\1))} = \frac{\filthat{\ell}(\widehat{\mathcal{L}}_{\ell,n}(\cdot,h))}{\filthat{\ell}(\widehat{\mathcal{L}}_{\ell,n}(\cdot,\1))} - \frac{\filthat{\ell-1}(\widehat{\mathcal{L}}_{\ell-1,n}(\cdot,h))}{\filthat{\ell-1}(\widehat{\mathcal{L}}_{\ell-1,n}(\cdot,\1))}\eqsp.
\end{equation}
This decomposition allows to obtain the upper bound for the marginal smoothing error stated in Proposition~\ref{prop:smoothing}. 
%\section{Proof of Proposition~\ref{prop:filtering}}
%\label{sec:proof:filtering}
%\begin{align*}
%R_2&\le c_-^{-1}\|h\|_{\infty}\left|\sum_{x,x'\in\X}\filthat{\ell-1}(x)\left[\Qstar(x,x')\fstar_{x'}(Y_{\ell})-\Qhat(x,x')\fhat_{x'}(Y_{\ell})\right]\right|\eqsp,\\
%&\le c_-^{-1}\sum_{x,x'\in\X}\filthat{\ell-1}(x)\left[\Qstar(x,x')-\Qhat(x,x')\right]\fstar_{x'}(Y_{\ell})h(x') \\
%&\hspace{5cm}+ c_-^{-1}\sum_{x,x'\in\X}\filthat{\ell-1}(x)\Qhat(x,x')\left[\fstar_{x'}(Y_{\ell})-\fhat_{x'}(Y_{\ell})\right]h(x')\eqsp,\\
%& \le c_-^{-1}\|h\|_{\infty}\left[C_{\mathcal F,\infty}\|\Qstar-\Qhat\|_{\infty} + \left|f^{\star}_{1}(Y_{\ell})-\hat{f}_{1}(Y_{\ell})\right|+\left|f^{\star}_{2}(Y_{\ell})-\hat{f}_{2}(Y_{\ell})\right|\right]\eqsp.
%\end{align*}
%\section{Proof of Proposition~\ref{prop:smoothing}}
%\label{sec:proof:smoothing}
The result is obtained by applying the decompositions \eqref{eq:telescopic:smooth} and \eqref{eq:smooth:via:filt} to a bounded function $h$ on $\X^n$ which depends on $x_k$ only: for all $(x_1,\ldots,x_n)\in\X^n$, $h(x_1,\ldots,x_n) = h(x_k)$. The one step error given by \eqref{eq:smooth:via:filt} is then analyzed separately wether $k\ge \ell$ or $k<\ell$.

\medskip

\noindent {\textbf{Case} $k\ge \ell$}\\
In this case, the function $L^{\star}_{\ell,n}(\cdot,h)$ defined in \eqref{eq:Lstar} depends on $x_\ell$ only. Therefore, $\widehat{\mathcal{L}}_{\ell,n}(x_\ell,h) = L^{\star}_{\ell,n}(x_\ell,h) = \mathcal{L}^{\star}_{\ell,n}(x_\ell,h)$. Thus, $\widehat{\mathcal{L}}_{\ell-1,n}(x_{\ell-1},h) = \sum_{x_\ell\in\X}\Qstar(x_{\ell-1},x_\ell)\fstar_{x_\ell}(y_\ell)\mathcal{L}^{\star}_{\ell,n}(x_\ell,h)$ and the one step error given by \eqref{eq:smooth:via:filt} becomes
\[
\delta_{\ell,n}(h) = \frac{\filthat{\ell}(\mathcal{L}^{\star}_{\ell,n}(\cdot,h))}{\filthat{\ell}(\mathcal{L}^{\star}_{\ell,n}(\cdot,\1))} - \frac{\filthat{\ell-1}(\sum_{x_\ell\in\X}\Qstar(\cdot,x_\ell)\fstar_{x_\ell}(y_\ell)\mathcal{L}^{\star}_{\ell,n}(x_\ell,h))}{\filthat{\ell-1}(\sum_{x_\ell\in\X}\Qstar(\cdot,x_\ell)\fstar_{x_\ell}(y_\ell)\mathcal{L}^{\star}_{\ell,n}(x_\ell,\1))}\eqsp.
\]
Define the measures $\mu_\ell$ and  $\widehat{\mu}_\ell$ on $\X$ by $\mu_\ell(x_\ell) \eqdef \sum_{x_{\ell-1}\in\X}\filthat{\ell-1}(x_{\ell-1})\Qstar(x_{\ell-1},x_\ell)\fstar_{x_\ell}(y_\ell)$ and $\widehat{\mu}_\ell(x_\ell) \eqdef \sum_{x_{\ell-1}\in\X}\filthat{\ell-1}(x_{\ell-1})\Qhat(x_{\ell-1},x_\ell)\fhat_{x_\ell}(y_\ell)$. Then,
\[
\delta_{\ell,n}(h) = \frac{\widehat{\mu}_\ell(\mathcal{L}^{\star}_{\ell,n}(\cdot,h))}{\widehat{\mu}_\ell(\mathcal{L}^{\star}_{\ell,n}(\cdot,\1))} - \frac{\mu_\ell(\mathcal{L}^{\star}_{\ell,n}(\cdot,h))}{\mu_\ell(\mathcal{L}^{\star}_{\ell,n}(\cdot,\1))}\eqsp.
\]
By \cite[Lemma~4.3.23]{cappe:moulines:ryden:2005} and {\bf [H\ref{assum:Qstar}]-\ref{assum:Qstar:deltastar})},
%\[
%\left|\delta_{\ell,n}(h)\right| \le \|h\|_{\infty}c_{\star}^{-1}(y_\ell) \rho_{\star}^{k-\ell}\left(\frac{1-\dstar}{\dstar}\right)C_{\mathcal F,\infty}\left\|\mu_\ell-\widehat{\mu}_\ell\right\|_{\mathrm{tv}}\eqsp,
%\]
$\left|\delta_{\ell,n}(h)\right| \le \rho_{\star}^{k-\ell}(1-\dstar)\left\|\mu_\ell/\mu_\ell(\1)-\widehat{\mu}_\ell/\widehat{\mu}_\ell(\1)\right\|_{\mathrm{tv}}\|h\|_{\infty}/\dstar$.
Following the same steps as for the proof of Proposition~\ref{prop:filtering} yields 
\[
\left\|\mu_\ell/\mu_\ell(\1)-\widehat{\mu}_\ell/\widehat{\mu}_\ell(\1)\right\|_{\mathrm{tv}} \le 2\|\Qstar-\Qhat\|_{F}/\delta^{\star} + 2c^{-1}_{\star}(y_{\ell})\max_{x\in\X}\left|f^{\star}_{x}(y_{\ell})-\hat{f}_{x}(y_{\ell})\right|\eqsp.
\]
The term $\filthat{1}(L^{\star}_{1,n}(\cdot,h))/\filthat{1}(L^{\star}_{1,n}(\cdot,\1)) - \filtstar{1}(L^{\star}_{1,n}(\cdot,h))/\filtstar{1}(L^{\star}_{1,n}(\cdot,\1))$ is dealt with similarly.

\medskip

\noindent {\textbf{Case} $k<\ell$}\\
In this case, $L^{\star}_{\ell,n}(x_{1:\ell},h) = h(x_k)\mathcal{L}^{\star}_{\ell,n}(x_\ell,\1)$. Therefore, 
\begin{align*}
\widehat{\mathcal{L}}_{\ell,n}(x_\ell,h) &= \sum_{x_{1:\ell-1}} \widehat{B}_{\filthat{\ell-1}}(x_\ell,x_{\ell-1})\ldots \widehat{B}_{\filthat{1}}(x_2,x_{1})h(x_k)\mathcal{L}^{\star}_{\ell,n}(x_\ell,\1)\eqsp,\\
 &= \sum_{x_{k:\ell-1}} \mathcal{L}^{\star}_{\ell,n}(x_\ell,\1)\widehat{B}_{\filthat{\ell-1}}(x_\ell,x_{\ell-1})\ldots \widehat{B}_{\filthat{k}}(x_{k+1},x_{k})h(x_k)\eqsp.
\end{align*}
On the other hand, if $\nu_\ell(x_\ell) \eqdef  \sum_{x_{\ell-1}\in\X}\filthat{\ell-1}(x_{\ell-1})\Qstar(x_{\ell-1},x_\ell)\fstar_{x_\ell}(y_\ell)\mathcal{L}^{\star}_{\ell,n}(x_\ell,\1)$,
\[
\filthat{\ell-1}(\widehat{\mathcal{L}}_{\ell-1,n}(\cdot,h)) = \sum_{x_{k:\ell}\in\X^{\ell-k+1}}\nu_\ell(x_\ell)\widehat{B}_{\filthat{\ell-1}}(x_{\ell},x_{\ell-1})\ldots \widehat{B}_{\filthat{k}}(x_{k+1},x_{k})h(x_k)\eqsp.
\]
Define $\widehat{\nu}_\ell(x_\ell) \eqdef  \filthat{\ell}(x_\ell)\mathcal{L}^{\star}_{\ell,n}(x_\ell,\1) = \sum_{x_{\ell-1}\in\X}\filthat{\ell-1}(x_{\ell-1})\Qhat(x_{\ell-1},x_\ell)\fhat_{x_\ell}(y_\ell)\mathcal{L}^{\star}_{\ell,n}(x_\ell,\1)$. Then,  the one step error given by \eqref{eq:smooth:via:filt} becomes
\[
\delta_{\ell,n}(h) = \sum_{x_{k:\ell-1}} \left(\frac{\widehat{\nu}_\ell(x_\ell)}{\widehat{\nu}_\ell(\1)}-\frac{\nu_\ell(x_\ell)}{\nu_\ell(\1)}\right)\widehat{B}_{\filthat{\ell-1}}(x_\ell,x_{\ell-1})\ldots \widehat{B}_{\filthat{k}}(x_{k+1},x_{k})h(x_k)
\]
By \cite[Lemma~4.3.23]{cappe:moulines:ryden:2005} and the fact that, for all $(x,x')\in\X^2$, $\Qhat(x,x')\ge \widehat{\delta}$,
\[
\left|\delta_{\ell,n}(h)\right| \le \|h\|_{\infty}\widehat{\rho}^{\,\ell-k}\left\|\frac{\widehat{\nu}_\ell(\cdot)}{\widehat{\nu}_\ell(\1)} - \frac{\nu_\ell(\cdot)}{\nu_\ell(\1)}\right\|_{\mathrm{tv}}\eqsp.
\]
As for all $x_\ell\in\X$, $\mathcal{L}^{\star}_{\ell,n}(x_\ell,\1)/\|\mathcal{L}^{\star}_{\ell,n}(\cdot,\1)\|_{\infty}\ge \dstar/(1-\dstar)$, following the same steps as for the proof of Proposition~\ref{prop:filtering} yields 
\[
\left\|\frac{\widehat{\nu}_\ell(\cdot)}{\widehat{\nu}_\ell(\1)} - \frac{\nu_\ell(\cdot)}{\nu_\ell(\1)}\right\|_{\mathrm{tv}}\le2 \left(\frac{1-\dstar}{\dstar}\right)\left(\|\Qstar-\Qhat\|_{F}/\delta^{\star} + c_{\star}^{-1}(y_\ell)\max_{x\in\X}\left|f^{\star}_{x}(y_{\ell})-\hat{f}_{x}(y_{\ell})\right|\right)\eqsp.
\]

\section{Nonparametric spectral estimators}
Theorem~\ref{thm:spectral} follows from the following more precise results proved in this section. The proofs of the intermediate lemmas require assumptions {\bf [H\ref{assum:Qstarprime}']} and {\bf [H\ref{assum:stationary}]-[H\ref{assum:linearly:ind}]}.
\begin{lemma}
\label{lem:sigmastar}
There exist a constant $0<\sigmaFamily\leq1$ and a positive integer $M_{\frakFstar}$ such that for all $M\geq M_{\frakFstar}$,
\[
\sigma_{K}(\O)\geq{\sigmaFamily}>0\,.
\]
\end{lemma}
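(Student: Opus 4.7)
The plan is to reduce the claim to a continuity argument on Gram matrices. First I would rewrite $\O^{\top}\O$ as the Gram matrix of the projected family $\{f^{\star}_{M,1},\ldots,f^{\star}_{M,K}\}$: since $\Phi_{M}$ is orthonormal in $\mathrm{L}^{2}(\Y,\L)$, we have
\[
(\O^{\top}\O)(x,x')=\sum_{m=1}^{M}\O(m,x)\O(m,x')=\langle f^{\star}_{M,x},f^{\star}_{M,x'}\rangle_{\mathrm{L}^{2}(\Y,\L)}\,.
\]
Denoting this $K\times K$ symmetric matrix by $G_{M}$, this gives $\sigma_{K}(\O)^{2}=\lambda_{K}(G_{M})$, so it suffices to bound $\lambda_{K}(G_{M})$ away from zero uniformly for $M$ large enough.

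Next I would pass to the limit $M\to\infty$. Since the subspaces $\Proj_{M}$ are nested with dense union in $\mathrm{L}^{2}(\Y,\L)$, equation \eqref{eq:ConvergenceL2} yields $f^{\star}_{M,x}\to f^{\star}_{x}$ in $\mathrm{L}^{2}(\Y,\L)$ as $M\to\infty$, for every $x\in\X$. By the Cauchy--Schwarz inequality, each entry of $G_{M}$ converges to the corresponding entry of the Gram matrix $G^{\star}$ of $\F^{\star}$, so by Weyl's inequality on the finite-dimensional space of $K\times K$ symmetric matrices, $\lambda_{K}(G_{M})\to \lambda_{K}(G^{\star})$.

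Finally I would invoke assumption {\bf [H\ref{assum:linearly:ind}]}: linear independence of $\F^{\star}$ in $\mathrm{L}^{2}(\Y,\L)$ is equivalent to its Gram matrix $G^{\star}$ being positive definite, hence $\lambda_{K}(G^{\star})>0$. Therefore there exists $M_{\frakFstar}$ such that $\lambda_{K}(G_{M})\geq \lambda_{K}(G^{\star})/2$ for all $M\geq M_{\frakFstar}$, and setting
\[
\sigmaFamily \eqdef \min\!\Bigl(1,\sqrt{\lambda_{K}(G^{\star})/2}\Bigr)
\]
gives the claimed uniform lower bound together with $\sigmaFamily\leq 1$. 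I do not anticipate a genuine obstacle: the argument is essentially a compactness/continuity step, and the only fact one must be careful about is that the linear independence hypothesis \emph{does} translate into a positive lower bound on the smallest eigenvalue of $G^{\star}$, which is classical since $G^{\star}$ is the matrix of inner products of a linearly independent family in a Hilbert space.
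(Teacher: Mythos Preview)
Your proof is correct and follows essentially the same route as the paper: both identify $\O^{\top}\O$ with the Gram matrix of the projected densities, use the $\mathrm{L}^{2}$-convergence \eqref{eq:ConvergenceL2} to pass to the limiting Gram matrix $G^{\star}=\Ostar^{\top}\Ostar$, invoke {\bf [H\ref{assum:linearly:ind}]} to get $\lambda_{K}(G^{\star})>0$, and apply Weyl's inequality to transfer this to $\lambda_{K}(G_{M})$ for large $M$. The only cosmetic difference is the constant (the paper takes $\varepsilon_{\frakFstar,M}\leq 3\lambda_{K}(G^{\star})/4$ and obtains $\sigma_{K}(\O)\geq \sigma_{K}(\Ostar)/2$, whereas you take $\lambda_{K}(G_{M})\geq \lambda_{K}(G^{\star})/2$), and you are slightly more explicit about enforcing $\sigmaFamily\leq 1$ via the $\min$.
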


\begin{proof}
By {\bf [H\ref{assum:linearly:ind}]}, the $(K\times K)$ Gram matrix defined by $\Ostar^{\top}\Ostar:=(\langle \fstar_{x_{1}},\fstar_{x_{2}}\rangle)_{x_{1},x_{2}\in\X}$ is invertible. Let $\varepsilon_{\frakFstar,M}$ be given by:
\eq
\label{eq:OMerror}
\varepsilon_{\frakFstar,M}\eqdef \left\lVert \O^{\top}\O-\Ostar^{\top}\Ostar\right\lVert = \left\lVert(\langle  \fstar_{M,x_{1}}, \fstar_{M,x_{2}}\rangle-\langle \fstar_{x_{1}},\fstar_{x_{2}}\rangle)_{x_{1},x_{2}\in\X}\right\lVert\,.
\qe
From \eqref{eq:ConvergenceL2}, there exists $M_{\frakFstar}\geq1$ such that for all $M\geq M_{\F^{\star}}$, $\varepsilon_{\frakFstar,M}\leq 3\lambda_{K}(\Ostar^{\top}\Ostar)/4$.
By Weyl's inequality (see Theorem~\ref{thm:Weyl}), $\sigma_{K}^{2}(\O)=\lambda_{k}(\O^{\top}\O)\geq{\lambda_{K}(\Ostar^{\top}\Ostar)}/{4}$. If $\sigma_{K}(\Ostar)\eqdef\lambda^{1/2}_{K}(\Ostar^{\top}\Ostar)$, note that for all $M\geq M_{\F^{\star}}$, $\sigma_{K}(\O)\geq{\sigma_{K}(\Ostar)}/{2}$, which concludes the proof.
\end{proof}

\noindent Define the \textit{pseudo spectral gap} $\psg$ of the Markov chain $(X_{n})_{n\geq1}$ as
\[
\psg \eqdef \max_{k\geq1}\left\{\gap\left(\D{\pistar}^{-1}(\Qstar^{\top})^{k}\D{\pistar}\Qstar^{k}\right)/k\right\}\eqsp,
\]
where $\gap(A)$ denotes the spectral gap of a transition matrix $A$ defined by
\[
\gap(A)\eqdef\begin{cases}
    1-\max\{\lambda\ :\ \lambda\ \mathrm{eigenvalue\ of}\ A\,,\ \lambda\neq1\}  & \text{if eigenvalue 1 has multiplicity 1}, \\
   0   & \text{otherwise}.
\end{cases}
\]
Note that $\psg$ depends only on the transition matrix $\Qstar$ which is assumed to be aperiodic and irreducible with unique stationary distribution $\pi^{\star}$. Perron-Frobenius theorem ensures that the spectral gap $\gap(A)$ is well defined and such that $0\leq\gap(A)\leq2$.
\begin{remark}
If $\Qstar$ is aperiodic and irreducible then $\psg>0$. In this case, there exists $k$ such that $\Qstar^{k}$ is positive (entrywise) and so is $A\eqdef\D{\pistar}^{-1}(\Qstar^{\top})^{k}\D{\pistar}\Qstar^{k}$. As $A$ is a positive transition matrix, Perron-Frobenius theorem ensures that its spectral gap is positive. 
\end{remark}
\begin{remark}
 If $\Qstar$ is aperiodic, irreducible and reversible then $\psg=\gap(\Qstar)(2-\gap(\Qstar))>0$, see \cite{paulin2012concentration} and references therein.
\end{remark}
\noindent
Define the mixing time $\mixtime$ of the Markov chain $(X_{n})_{n\geq1}$ as
\[
\mixtime:=\frac{1+3\log 2-\log\pistarmin}{\psg}\,.
\]
This mixing time has a deeper interpretation in terms of convergence towards the stationary distribution in total variation norm, see \cite{paulin2012concentration} for instance. For any $\delta\in(0,1)$, set 
\begin{equation}
\label{eq:cstar}
\mathcal{C}_{\star}(\Qstar,\delta)\eqdef\sqrt{ 2/\psg}+2\sqrt{-2\,\mathbb T_{\mathrm{mix}}\log\delta}\eqsp,
\end{equation}
which is a constant that depends only on $\Qstar$ and $\delta$. 

\begin{theorem}
\label{thm:ThmMain1}
Assume that {\bf [H\ref{assum:Qstarprime}']} and {\bf [H\ref{assum:stationary}]-[H\ref{assum:linearly:ind}]} hold. Let $\delta,\delta'\in(0,1)$ then, with probability greater than $1-2\delta-4\delta'$, there exists a permutation $\tau\in\mathcal S_{K}$ such that the spectral method estimators $\fhat_{M,x}$, $\pihat$ and $\Qhat$ (see Algorithm~\ref{alg:Spectral} for a definition) satisfy, for any $M\geq M_{\F^{\star}}$,
\begin{enumerate}[-]
\item for all $p\geq\borneNfinale$ and all $x\in\X$,
\begin{equation}
\label{eq:ControldeOthm}
\lVert  \fstar_{M,x}-\fhat_{M,\tau(x)}\lVert_{2}\leq\mathcal {C}_{M}(\Qstar,\F^{\star},\delta)\concentration\eta_3(\Phi_{M})/\sqrt p\eqsp,
\end{equation}
\item for all $p\geq\borneNfinalebis$,
\begin{equation}
\label{eq:EstimationOfQ}
\lVert\Qstar-\mathbb \P_{\tau}\Qhat \mathbb P_{\tau}^{\top}\lVert
\leq
\mathcal {D}_{M}(\Qstar,\F^{\star},\delta)\concentration\eta_3(\Phi_{M})/\sqrt p\eqsp,
\end{equation}
\item for all $p\geq\borneNfinalebisbis$,
\begin{equation}
\label{eq:EstimationOfL}
\lVert\pistar-\mathbb P_{\tau}\pihat\lVert_{2}
\leq
\mathcal {E}_{M}(\Qstar,\F^{\star},\delta)\concentration\eta_3(\Phi_{M})/\sqrt p\eqsp,
\end{equation}
\end{enumerate}
where $\mathbb P_{\tau}$ is the permutation matrix associated to $\tau$, and
\begin{align*}
\borneNfinale&\eqdef\frac{4K}{3\sigmaFamily^2}\mathcal C_{M}(\Qstar,\F^{\star},\delta)^{2}\,\concentration^{2} \eta_3(\Phi_{M})^{2}\,,
\\
\borneNfinalebis&\eqdef\frac{4}{{\pistar}_{\mathrm{min}}^{2}}\mathcal D'_{M}(\Qstar,\F^{\star},\delta)^{2}\,\concentration^{2} \eta_3(\Phi_{M})^{2}\,,
\\
\borneNfinalebisbis&\eqdef\frac{4}{\sigma_{K}^{2}(\mathbf{A}_{\Qstar})}\mathcal D_{M}(\Qstar,\F^{\star},\delta)^{2}\,\concentration^{2} \eta_3(\Phi_{M})^{2}\,,
\end{align*}
with
\begin{align*}
\mathcal C_{M}(\Qstar,\F^{\star},\delta)\eqdef& \frac2{\sqrt M}\frac{\displaystyle\max_{x\in\X}\lVert f^{\star}_{x}\lVert_{2}}{\sigmaFamily^2\pistar_{\mathrm{min}}\sigma_{K}({\Qstar}^{2})} + \left[1+\frac{\lVert g^{\star}\lVert_{2}}{\pistar_{\mathrm{min}}\sigmaFamily^2\sigma_{K}({\Qstar}^{2})}\frac1{\sqrt M}\right]\\ 
&\hspace{-.3cm}\times \left[\frac{13\kappa^{2}(\Qstar)K^{1/2}}{\pistar_{\mathrm{min}}\sigma_{K}({\Qstar}^{2})}\frac{\kappa^{2}_{\F^{\star}}}{\sigmaFamily^2}+\frac{83}{\delta}\frac{\kappa^{6}(\Qstar)K^{5}}{\pistar_{\mathrm{min}}\sigma_{K}({\Qstar}^{2})}\frac{\kappa^{6}_{\F^{\star}}\displaystyle\max_{k\in\X}\lVert f^{\star}_{k}\lVert_{2}}{\sigmaFamily^3}\left\{1+\left({2\log\frac{K^{2}}\delta}\right)^{1/2}\right\}\right]\eqsp,\\
\mathcal D'_{M}(\Qstar,\F^{\star},\delta)\eqdef&\frac{2}{ 3 \sigmaFamily^{2}}\left[{4\sqrt K \mathcal C_{M}(\Qstar,\F^{\star},\delta)\displaystyle\max_{x\in\X}\lVert f^{\star}_{x}\lVert_{2}}+\frac{3\sqrt 3 \sigmaFamily}M\right]\,,\\
{\mathcal D}_{M}(\Qstar,\F^{\star},\delta)\eqdef&\frac{8\lVert f^{\star}_{(Y_{1},Y_{3})}\lVert_{2}}{3\sigmaFamily^{2}{\pistar}_{\mathrm{min}}^{2}}\left[\mathcal D'_{M}(\Qstar,\F^{\star},\delta)+4\sqrt{3K}\pistar_{\mathrm{min}}\mathcal C_{M}(\Qstar,\F^{\star},\delta)+\frac{5\pistar_{\mathrm{min}}}{\lVert f^{\star}_{(Y_{1},Y_{3})}\lVert_{2}\sqrt{M}}\right]
\,,\\
{\mathcal E}_{M}(\Qstar,\F^{\star},\delta)\eqdef&\frac{16\lVert f^{\star}_{(Y_{1},Y_{3})}\lVert_{2}}{\sigma_{K}^{2}(\mathbf{A}_{\Qstar})\sigmaFamily^{2}{\pistar}_{\mathrm{min}}^{2}}\left[\mathcal D'_{M}(\Qstar,\F^{\star},\delta)+4\sqrt{3K}\pistar_{\mathrm{min}}\mathcal C_{M}(\Qstar,\F^{\star},\delta)+\frac{5\pistar_{\mathrm{min}}}{\lVert f^{\star}_{(Y_{1},Y_{3})}\lVert_{2}\sqrt{M}}\right]\eqsp,
\end{align*}
where $\kappa_{\F^{\star}}$ is given in Lemma~\ref{L4}, for all $(y_{1},y_{2},y_{3})\in\Y^{3}$,
\[
g^{\star}\left(y_{1},y_{2},y_{3}\right)\eqdef\sum_{x_1,x_{2},x_3\in\X} \pistar(x_1) \Qstar(x_1,x_2)\Qstar(x_{2},x_3)\fstar_{x_1}(y_1) \fstar_{x_2}(y_2) \fstar_{x_3}(y_3)\,,\] 
and $\sigma_{K}^{2}(\mathbf{A}_{\Qstar})$ is the $K$-th largest singular value of
$
\begin{pmatrix}
\mathrm{Id}_{K}-(\Qstar)^{\top}\\
\one_{K}^{\top}
\end{pmatrix}
$ $($which is positive, see \eqref{eq:KPerron}$)$.
\end{theorem}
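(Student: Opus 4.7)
}
The plan is to run the standard spectral-learning perturbation argument in the nonparametric regime, carefully tracking the dependence on $M$ through $\eta_3(\Phi_M)$. I break the proof into four quantitative steps: (i) concentration of the empirical moments; (ii) perturbation of the singular subspace $\U$; (iii) perturbation of the simultaneous-diagonalization step yielding $\hatO$; and (iv) transfer of these bounds to $\Qhat$ and $\pihat$.

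The first step is concentration. The four empirical quantities $\hatl,\hatN,\hatM,\hatPj$ of Step~1 are ergodic averages along the stationary Markov chain $(Y_s)$, and every individual coordinate is bounded in absolute value by $\sup_{y,y'\in \Y^3}|\varphi_a(y_1)\varphi_b(y_2)\varphi_c(y_3)-\varphi_a(y'_1)\varphi_b(y'_2)\varphi_c(y'_3)|$, whose vector-valued version is controlled by $\eta_3(\Phi_M)$. Using Paulin's Hoeffding-type inequality for Markov chains, driven by the pseudo spectral gap $\psg$ and the mixing time $\mixtime$, and combining four deviations by a union bound, I would get that with probability at least $1-4\delta'$,
\[
\|\hatl-\l\|_2\vee \|\hatN-\N\|\vee \|\hatM-\M\|\vee \|\hatPj-\Pj\|
\;\leq\; \concentration\,\eta_3(\Phi_M)/\sqrt p,
\]
which is where the factor $\concentration\,\eta_3(\Phi_M)/\sqrt p$ in the statement originates. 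All lower-order moment bounds in the sequel will be expressed in these four error terms.

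The second step is the subspace perturbation. Lemma~\ref{lem:sigmastar} gives $\sigma_K(\O)\ge\sigmaFamily$ for $M\ge M_{\frakFstar}$, and the factorization $\Pj=\O\, \D{\pistar}(\Qstar)^2\O^{\top}$ (up to transpose) combined with \HPM\ shows $\sigma_K(\Pj)\ge \pistarmin\sigmaFamily^{2}\sigma_K({\Qstar}^2)$. When $p\ge \borneNfinale$, this singular-value gap dominates $\|\hatPj-\Pj\|$, and Weyl's and Wedin's $\sin\Theta$ theorems yield a bound on $\|(\mathrm{Id}-\U\U^{\top})\Uhat\|$ of the same order as $\|\hatPj-\Pj\|/\sigma_K(\Pj)$. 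Then, following Anandkumar–Hsu–Kakade–Zhang, the population matrices $B(b)\eqdef(\U^{\top}\Pj\U)^{-1}\U^{\top}\M(\cdot,b,\cdot)\U$ admit a common eigenvector matrix $\Rbf=\U^{\top}\O$ with eigenvalues given by the $b$-th row of $\O$, so the random linear combination $\C(x)=\sum_b(\U\Theta)(b,x)B(b)$ has eigenvalues $\{\sum_b (\U\Theta)(b,x)\O(b,k)\}_k$. The minimum eigengap of $\C(x)$ is bounded below, with probability at least $1-2\delta$ over the Haar-distributed $\Theta$, by an explicit decreasing function of $\delta$, $\kappa(\Qstar)$, $\sigmaFamily$ and $\kappa_{\F^{\star}}$: this is where the factors $\sqrt{2\log(K^2/\delta)}$ and $\kappa^6(\Qstar)/\delta$ enter $\mathcal C_M$. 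Because Steps~2--6 of the algorithm are smooth functions of $\hatPj$, $\hatM$ on the event just described, a Bauer–Fike-style perturbation bound for diagonalizable matrices with simple spectrum translates $\|\widehat\C(x)-\C(x)\|$ into a bound on $\|\mathbb P_\tau\widehat\Lambda-\Lambda\|$ for a suitable permutation $\tau$, and hence into the first bound \eqref{eq:ControldeOthm} via $\hatO=\Uhat\Theta\widehat\Lambda$ and orthonormality of $\Phi_M$.

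Finally, I transfer these bounds to $\Qhat$ and $\pihat$. The identity $\l=\O\pistar$ and its empirical analogue yield $\tilde\pi-\pistar=(\U^{\top}\hatO)^{-1}\U^{\top}(\hatl-\l)+(\U^{\top}\hatO)^{-1}\U^{\top}(\O-\hatO)\pistar$, controlled through the bounds already obtained and a bound on the smallest singular value of $\U^{\top}\hatO$ (itself a perturbation of $\U^{\top}\O$, whose singular values are bounded below by $\sigmaFamily$). The population identity $\N=\O\D{\pistar}\Qstar^{\top}\O^{\top}$ shows that the argument of $\Pi_{\mathrm{TM}}$ in Step~9 is a consistent estimator of $\Qstar^{\top}$; since $\Pi_{\mathrm{TM}}$ is $1$-Lipschitz in Frobenius norm, the bound \eqref{eq:EstimationOfQ} follows from standard bounds on the perturbation of a product of four matrices. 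The last bound \eqref{eq:EstimationOfL} is obtained by writing $\pihat-\pistar$ as the solution of the overdetermined linear system whose matrix is $\bigl(\mathrm{Id}_K-(\Qhat)^{\top};\,\one_K^{\top}\bigr)$ and applying the pseudo-inverse perturbation bound built on $\sigma_K(\mathbf A_{\Qstar})$. The main obstacle, and the reason the constants are so intricate, is the randomized diagonalization step: one needs a uniform-in-$M$ lower bound on the eigengap of $\C(x)$ with quantitative dependence on $\Theta$, for which the random unitary device must be analyzed jointly with the perturbation of $\U$ and of the $B(b)$'s coming from the nonparametric concentration in Step~1.
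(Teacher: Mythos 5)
Your plan reproduces the paper's proof essentially step for step: Paulin-type Markov-chain concentration for $\hatl$, $\hatN$, $\hatM$, $\hatPj$ with the constant $\concentration$, the quantitative Anandkumar--Hsu--Kakade simultaneous-diagonalization argument with the Haar-random $\Theta$ and Weyl/Wedin/Bauer--Fike perturbation bounds (this is exactly Theorem~\ref{thm:HSK}) for \eqref{eq:ControldeOthm}, an inverse-perturbation bound for the surrogate $\tilde\pi$, non-expansivity of $\Pi_{\mathrm{TM}}$ plus perturbation of the matrix product $\D{\tilde\pi}^{-1}\widehat{\V}\hatN\widehat{\V}^{\top}$ for \eqref{eq:EstimationOfQ}, and the Moore--Penrose perturbation of the augmented matrix built on $\sigma_{K}(\mathbf A_{\Qstar})$ for \eqref{eq:EstimationOfL}. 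Only minor slips to fix, none structural: the common right-eigenvector matrix of the $\B(b)$'s is $(\Qstar\O^{\top}\Uhat)^{-1}$, not $\Uhat^{\top}\O$ (see Lemma~\ref{lem:Fundamental}; your eigenvalue identification $\Lambda(x,k)=\sum_{b}(\Uhat\Theta)(b,x)\O(b,k)$ is nonetheless correct), the population identity is $\N=\O\,\D{\pistar}\,\Qstar\,\O^{\top}$ with no transpose so that Step~9 estimates $\Qstar$ itself, and to recover the exact $1/\sqrt M$ and $1/M$ terms in $\mathcal C_{M}$ and $\mathcal D'_{M}$ you must keep the finer per-moment rates $\eta_{1}(\Phi_{M})$, $\eta_{2}(\Phi_{M})$ for $\hatl$, $\hatN$, $\hatPj$ (via $M\eta_{1}(\Phi_{M})\leq\sqrt M\,\eta_{2}(\Phi_{M})\leq\eta_{3}(\Phi_{M})$) rather than bounding all four deviations uniformly by $\eta_{3}(\Phi_{M})$, which would prove the theorem only with slightly larger constants.
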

Theorem~\ref{thm:ThmMain1} is proved using the analysis of \cite{anandkumar2012method} to control the $\mathrm{L}^{2}$-error of the estimation based on the spectral method described in Section~\ref{sec:SpectralMethod}. To use their result in the nonparametric framework, it is essential to state explicitly how all constants depend on the dimension $M$. We thus need to recast and optimize the results of \cite{anandkumar2012method}. This is done in Theroem~\ref{thm:HSK} which is proved in Appendix~\ref{sec:HSK}. Define 
\begin{equation}
\label{eq:def:gamma}
\gamma(\O)\eqdef\displaystyle\min_{x_{1} \neq x_{2}}\left\lVert\O(\ldotp,x_{1})-\O(\ldotp,x_{2})\right\lVert_{2}
\end{equation}
and for all $A\in\R^{M\times M\times M}$ and all $B\in\R^{M\times K}$
\begin{equation}
\label{eq:def:infty:2}
\lVert A\lVert_{\infty,2} \eqdef\displaystyle\max_{\lVert v\lVert_{2}=1}\left\lVert \sum_{b=1}^{M}v_{b}A(\ldotp,b,\ldotp)\right\lVert\quad\mbox{and}\quad\lVert B\lVert_{2,\infty}\eqdef\max_{x\in\X}\left\lVert B(\ldotp,x)\right\lVert_{2}\eqsp.
\end{equation}

\begin{theorem}\label{thm:HSK}
Let $0<\delta<1$. Assume that $3\lVert\hatPj-\Pj\lVert\leq\sigma_{K}(\Pj)$ and that
\begin{align}
\label{eq:H2AHK}
8.2K^{5/2}(K-1)
\frac{\k^{2}(\Qstar\O^{\top})}
{\delta\gamma(\O)\sigma_{K}(\Pj)}
\left[
\lVert\hatM-\M\lVert_{\infty,2}+
\frac{\lVert\M\lVert_{\infty,2}\lVert\hatPj-\Pj\lVert}{\sigma_{K}(\Pj)}
\right]
&<1\,,
\\
\label{eq:H3AHK}
43.4K^{4}(K-1)\frac
{\k^{4}(\Qstar\O^{\top})}
{\delta\gamma(\O)\sigma_{K}(\Pj)}
\left[
\lVert\hatM-\M\lVert_{\infty,2}+
\frac{\lVert\M\lVert_{\infty,2}\lVert\hatPj-\Pj\lVert}{\sigma_{K}(\Pj)}
\right]
&\leq1\,,
\end{align}
then, with probability greater than $1-2\delta$, the matrix $\Uhat^{\top}\hatPj\Uhat$ is invertible, the random matrix $\Chat(1)$ is diagonalisable (see Algorithm \ref{alg:Spectral}), and there exists a permutation $\tau\in\mathcal S_{K}$ such that for all $x\in\X$, 
\begin{multline*}
\lVert\O(\ldotp,x)-\hatO(\ldotp,\tau(x))\lVert_{2} \le \frac{2\lVert\hatPj-\Pj\lVert}{\sigma_{K}(\Pj)}\lVert\O\lVert_{2,\infty} + \left[
\lVert\hatM-\M\lVert_{\infty,2}+
\frac{\lVert\M\lVert_{\infty,2}\lVert\hatPj-\Pj\lVert}{\sigma_{K}(\Pj)}
\right]\\
\times
\left[
13K^{1/2}\frac{\k^{2}(\Qstar\O^{\top})}{\sigma_{K}(\Pj)}+116K^{5}\left\{1+\left({2\log(K^{2}/\delta)}\right)^{1/2}\right\}\frac{\k^{6}(\Qstar\O^{\top})\lVert\O\lVert_{2,\infty}}{\delta\gamma(\O)\sigma_{K}(\Pj)}\right]
\eqsp.
\end{multline*}
\end{theorem}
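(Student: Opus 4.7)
The plan is to follow the simultaneous‐diagonalization spectral scheme of Anandkumar--Hsu--Kakade \cite{anandkumar2012method}, but keeping explicit track of how every constant depends on $M$ and on the problem parameters. The argument will proceed in four stages: (i) perturbation of the dominant right singular subspace of $\Pj$; (ii) identification of the exact diagonal structure of the population counterparts of $\Bhat(b)$; (iii) a quantitative eigenvalue-separation argument for the random contraction $\Chat(1)$; and (iv) assembling the bound on $\lVert\O(\cdot,x)-\hatO(\cdot,\tau(x))\lVert_2$.

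\emph{Stage (i).} First, conditioning on $(X_1,X_3)$ yields the factorization $\Pj=\O\,\D{\pistar}\,\Qstar^{2}\,\O^{\top}$, so that $\Pj$ has rank $K$ and its top-$K$ right singular subspace coincides with the column span of $\O$. Let $U$ be a matrix of top-$K$ right singular vectors of $\Pj$. I would then invoke Wedin's $\sin\Theta$ theorem, which, combined with the hypothesis $3\lVert\hatPj-\Pj\lVert\le\sigma_K(\Pj)$, provides an orthogonal $\Phi\in O(K)$ with $\lVert\Uhat-U\Phi\lVert\le 2\lVert\hatPj-\Pj\lVert/\sigma_K(\Pj)$, and simultaneously $\sigma_K(\Uhat^{\top}\hatPj\Uhat)\ge\sigma_K(\Pj)-3\lVert\hatPj-\Pj\lVert>0$, so that every inverse appearing in Algorithm~\ref{alg:Spectral} is well defined.

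\emph{Stage (ii).} Conditioning on $(X_1,X_2,X_3)$ gives, for each $b\in\{1,\ldots,M\}$, the identity $\M(\cdot,b,\cdot)=\O\,\D{\pistar}\,\Qstar\,\D{\O(b,\cdot)}\,\Qstar\,\O^{\top}$. Setting $W:=\Qstar\O^{\top}U$, a direct computation will yield
\[
B(b):=(U^{\top}\Pj U)^{-1}U^{\top}\M(\cdot,b,\cdot)U=W^{-1}\D{\O(b,\cdot)}W,
\]
so that the entire family $\{B(b)\}_{b=1}^M$ is simultaneously diagonalized by $W^{-1}$, whose condition number is controlled by $\k(\Qstar\O^{\top})$, and the $K$ eigenvalues of $B(b)$ are exactly the entries of the $b$-th row of $\O$.

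\emph{Stage (iii) --- the main difficulty.} After contraction by $\Theta$, the population matrix $C(1):=\sum_b (U\Theta)(b,1)B(b)$ is similar to a diagonal matrix whose entries are (up to benign normalisations) the projections of the vectors $\O(\cdot,x)$ onto a Haar random direction on $S^{K-1}$. The eigengap of $C(1)$ therefore has the form $\min_{x\neq x'}|\langle\theta,v_{x,x'}\rangle|$ with $\lVert v_{x,x'}\lVert_2\ge\gamma(\O)$ and $\theta$ Haar-uniform on $S^{K-1}$. I would then appeal to Gaussian-type anti-concentration for projections of Haar vectors together with a union bound over the $\binom{K}{2}$ index pairs to deduce, with probability at least $1-\delta$ over $\Theta$, a lower bound of order $\delta\gamma(\O)/K^{5/2}$ on the eigengap. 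The hypotheses \eqref{eq:H2AHK}--\eqref{eq:H3AHK} are calibrated precisely so that the propagated perturbation $\lVert\Chat(1)-C(1)\lVert$---controlled via stage (i) by a constant multiple of $\k^{2}(\Qstar\O^{\top})/\sigma_K(\Pj)$ times $\lVert\hatM-\M\lVert_{\infty,2}+\lVert\M\lVert_{\infty,2}\lVert\hatPj-\Pj\lVert/\sigma_K(\Pj)$---remains below half the eigengap. A Stewart--Sun eigenvector perturbation bound for non-normal diagonalizable matrices will then match the columns of the random matrix $\widehat{\mathbf R}$ computed in Step~5 to those of $W^{-1}$ up to a permutation $\tau\in\mathcal S_K$, with an explicit quantitative error.

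\emph{Stage (iv).} Finally I would use $\hatO=\Uhat\Theta\widehat{\mathbf\Lambda}$ and the analogous identity for $\O$ modulo $\tau$, and split $\lVert\O(\cdot,x)-\hatO(\cdot,\tau(x))\lVert_2$ by the triangle inequality into (a) a subspace error bounded by $\lVert\Uhat-U\Phi\lVert\cdot\lVert\O\lVert_{2,\infty}$, which yields the leading term $2\lVert\hatPj-\Pj\lVert\lVert\O\lVert_{2,\infty}/\sigma_K(\Pj)$ of the stated bound via stage (i); and (b) a diagonalization error which, combining stage (iii)'s gap lower bound with the propagated perturbation of $\Chat(1)$, produces the bracketed expression. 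Its two summands correspond respectively to a deterministic part scaling like $K^{1/2}\k^{2}(\Qstar\O^{\top})/\sigma_K(\Pj)$ and a random-rotation part scaling like $K^{5}(\log(K^{2}/\delta))^{1/2}\k^{6}(\Qstar\O^{\top})\lVert\O\lVert_{2,\infty}/(\delta\gamma(\O)\sigma_K(\Pj))$. Accumulating the numerical constants through each perturbation inequality produces the prefactors $13K^{1/2}$ and $116K^{5}$ in the statement.
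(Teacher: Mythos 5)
Your proposal is correct and takes essentially the same route as the paper: both follow the Anandkumar--Hsu--Kakade simultaneous-diagonalization scheme through the factorizations $\Pj=\O\D{\pistar}{\Qstar}^{2}\O^{\top}$ and $\M(\ldotp,b,\ldotp)=\O\D{\pistar}\Qstar\D{\O(b,\ldotp)}\Qstar\O^{\top}$, Wedin-type singular-subspace perturbation, Haar anti-concentration giving the eigengap of $\Chat(1)$ of order $\delta\gamma(\O)/(K^{5/2}(K-1))$ together with the $\lVert\Lambda\lVert_{\infty}$ bound (the two $\delta$-events behind the $1-2\delta$ probability), and a Bauer--Fike/eigenvector-perturbation step. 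The only packaging difference is that you align $\Uhat$ with $U\Phi$ for an orthogonal $\Phi$ and build the population surrogates from $U$, whereas the paper applies Lemma~\ref{lem:Fundamental} directly to the empirical $\Uhat$ and converts back through claim (v) of Lemma~\ref{lem:Wedinsetal} — identical bounds either way, with the paper's bookkeeping marginally tighter (e.g.\ its $\sigma_{K}(\Uhat^{\top}\hatPj\Uhat)\geq\sigma_{K}(\Pj)/3$, which, unlike your $\sigma_{K}(\Pj)-3\lVert\hatPj-\Pj\lVert$, stays positive at equality in the hypothesis).
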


\subsection*{Preliminary lemmas}
\label{sec:Preliminaires}
\begin{lemma}\label{L4}
There exists a constant $\kappa_{\F^{\star}}$ that depends only on $\F^{\star}$ such that for all $M\geq M_{\F^{\star}}$, $\kappa(\O)\leq\kappa_{\F^{\star}}$ where $M_{\F^{\star}}$ is given in Lemma~\ref{lem:sigmastar}.  For all $M\geq M_{\F^{\star}}$, $\kappa(\Qstar\O^{\top})\leq\kappa_{\F^{\star}}\kappa(\Qstar)$.
\end{lemma}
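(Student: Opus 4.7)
The plan is to bound $\sigma_1(\O)$ from above uniformly in $M$ and combine with the lower bound on $\sigma_K(\O)$ given by Lemma~\ref{lem:sigmastar}.

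For the upper bound on $\sigma_1(\O)$, I would note that $\O(m,x) = \langle f^{\star}_x, \varphi_m\rangle$ are precisely the coefficients of $f^{\star}_{M,x}$ in the orthonormal basis $\Phi_M$. Therefore Bessel's inequality gives
\[
\sigma_1^2(\O)\ \le\ \|\O\|_F^2\ =\ \sum_{x\in\X}\sum_{m=1}^M \langle f^{\star}_x,\varphi_m\rangle^2\ =\ \sum_{x\in\X}\|f^{\star}_{M,x}\|_2^2\ \le\ \sum_{x\in\X}\|f^{\star}_{x}\|_2^2,
\]
which is a finite constant depending only on $\F^{\star}$ (since each $f^{\star}_x\in\mathrm{L}^2(\Y,\L)$ by assumption), and is in particular independent of $M$. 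Combined with Lemma~\ref{lem:sigmastar}, which for $M\ge M_{\F^{\star}}$ yields $\sigma_K(\O)\ge\sigmaFamily$, this gives
\[
\kappa(\O)\ =\ \frac{\sigma_1(\O)}{\sigma_K(\O)}\ \le\ \frac{\bigl(\sum_{x\in\X}\|f^{\star}_{x}\|_2^2\bigr)^{1/2}}{\sigmaFamily}\ \eqdef\ \kappa_{\F^{\star}},
\]
and this constant depends only on $\F^{\star}$.

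For the second claim, I would invoke the standard submultiplicativity properties of singular values. Since $\Qstar$ is $K\times K$ and invertible (by {\bf [H\ref{assum:Qstarprime}']-\ref{assum:Qstar:rankprime})}) and $\O^{\top}$ is $K\times M$, one has $\sigma_1(\Qstar\O^{\top})\le \sigma_1(\Qstar)\sigma_1(\O)$, while writing $\O^{\top}=(\Qstar)^{-1}(\Qstar\O^{\top})$ yields $\sigma_K(\O)\le \|(\Qstar)^{-1}\|\sigma_K(\Qstar\O^{\top})=\sigma_K(\Qstar\O^{\top})/\sigma_K(\Qstar)$, i.e. $\sigma_K(\Qstar\O^{\top})\ge\sigma_K(\Qstar)\sigma_K(\O)$. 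Taking the ratio gives
\[
\kappa(\Qstar\O^{\top})\ \le\ \kappa(\Qstar)\,\kappa(\O)\ \le\ \kappa(\Qstar)\,\kappa_{\F^{\star}},
\]
which is the desired bound.

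There is no real obstacle here: once one recognizes $\O$ as a matrix of Fourier coefficients, Bessel's inequality immediately controls $\sigma_1(\O)$ uniformly in the approximation dimension $M$, and the second assertion is a textbook consequence of the multiplicative behaviour of extreme singular values under composition with an invertible square matrix.
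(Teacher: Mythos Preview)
Your proof is correct. The second claim is handled exactly as in the paper: both use the inequality $\sigma_i(AB)\le\sigma_1(A)\sigma_i(B)$ with $A=(\Qstar)^{-1}$ and $B=\Qstar\O^{\top}$ to obtain $\kappa(\Qstar\O^{\top})\le\kappa(\Qstar)\kappa(\O)$.

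For the first claim the arguments differ slightly. The paper observes that $\O^{\top}\O$ converges (entrywise, as $M\to\infty$) to the fixed nonsingular Gram matrix $\Ostar^{\top}\Ostar=(\langle f^{\star}_{x_1},f^{\star}_{x_2}\rangle)_{x_1,x_2}$, so $\kappa(\O)=\kappa(\O^{\top}\O)^{1/2}$ converges and is therefore bounded; the constant $\kappa_{\F^{\star}}$ is left implicit. You instead bound $\sigma_1(\O)$ directly via Bessel's inequality and combine with the lower bound on $\sigma_K(\O)$ already provided by Lemma~\ref{lem:sigmastar}. Your route has the advantage of producing an explicit value $\kappa_{\F^{\star}}=\bigl(\sum_{x}\|f^{\star}_x\|_2^2\bigr)^{1/2}/\sigmaFamily$, whereas the paper's convergence argument is marginally shorter but nonconstructive. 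Both are perfectly adequate here.
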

\begin{proof}
Note that $\Ostar^{\top}\Ostar$ is nonsingular. From  \eqref{eq:ConvergenceL2} and \eqref{eq:OMerror} we deduce that $\O^{\top}\O$ tends to $\Ostar^{\top}\Ostar$ as $M$ grows to infinity. This proves the first point. Recall that $\sigma_{i}(AB)\leq\sigma_{1}(A)\sigma_{i}(B)$ for all $i=1,\dots, K$. Applying this identity to $A={\Qstar}^{-1}$ and $B=\Qstar\O^{\top}$ yields $\sigma_{K}(\Qstar)\sigma_{K}(\O)\leq\sigma_{K}(\Qstar\O^{\top})$. It follows that $\kappa(\Qstar\O^{\top})\leq\kappa(\Qstar)\kappa(\O)$. The second claim follows from the first claim.
\end{proof}

\begin{lemma}
For all $M\geq M_{\F^{\star}}$, $\gamma(\O)\geq\sqrt 2\sigmaFamily$ and $\lVert \O\lVert_{2,\infty}\leq\max_{x\in\X}\lVert f^{\star}_{x}\lVert_{2}$, where $\gamma(\O)$ and $\lVert \O\lVert_{2,\infty}$ are defined in \eqref{eq:def:gamma} and \eqref{eq:def:infty:2}.
\end{lemma}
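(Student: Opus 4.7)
The plan is to unpack the two claims using the fact that the columns of $\O$ are precisely the vectors of Fourier coefficients of the projected emission densities on the orthonormal basis $\Phi_M$, combined with Lemma~\ref{lem:sigmastar}.

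For the bound on $\lVert\O\lVert_{2,\infty}$, I would proceed by Parseval's identity. For each $x\in\X$, the definition of $\O$ gives
\[
\lVert\O(\ldotp,x)\lVert_{2}^{2}=\sum_{m=1}^{M}\langle f^{\star}_{x},\varphi_{m}\rangle^{2}=\lVert f^{\star}_{M,x}\lVert_{2}^{2}\,.
\]
Since $f^{\star}_{M,x}$ is the orthogonal projection of $f^{\star}_{x}$ onto $\Proj_{M}$, Bessel's inequality yields $\lVert f^{\star}_{M,x}\lVert_{2}\leq\lVert f^{\star}_{x}\lVert_{2}$. Taking the maximum over $x\in\X$ gives the announced upper bound.

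For the bound on $\gamma(\O)$, I would use the variational characterization of the smallest singular value. For any $x_{1}\neq x_{2}$ in $\X$, let $e_{x_{1}}$ and $e_{x_{2}}$ denote the associated canonical basis vectors of $\R^{K}$. Then $\O(\ldotp,x_{1})-\O(\ldotp,x_{2})=\O(e_{x_{1}}-e_{x_{2}})$ and $\lVert e_{x_{1}}-e_{x_{2}}\lVert_{2}=\sqrt{2}$. Since $\lVert\O v\lVert_{2}\geq\sigma_{K}(\O)\lVert v\lVert_{2}$ for all $v\in\R^{K}$ (valid because $\O\in\R^{M\times K}$ with $M\geq M_{\F^{\star}}\geq K$ in the relevant regime), we obtain
\[
\lVert\O(\ldotp,x_{1})-\O(\ldotp,x_{2})\lVert_{2}\geq\sqrt{2}\,\sigma_{K}(\O)\,.
\]
Applying Lemma~\ref{lem:sigmastar}, which guarantees $\sigma_{K}(\O)\geq\sigmaFamily$ for all $M\geq M_{\F^{\star}}$, and taking the minimum over distinct pairs $x_{1}\neq x_{2}$ yields $\gamma(\O)\geq\sqrt{2}\,\sigmaFamily$.

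Both parts are short and rely only on standard linear algebra together with the already established Lemma~\ref{lem:sigmastar}, so I do not anticipate any serious obstacle. The only point requiring care is ensuring that the variational bound $\lVert\O v\lVert_{2}\geq\sigma_{K}(\O)\lVert v\lVert_{2}$ is legitimately applied, which holds because $\O$ has $K$ columns and $M\geq K$, so $\sigma_{K}(\O)$ is its smallest singular value.
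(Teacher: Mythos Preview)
Your proof is correct and follows exactly the same approach as the paper: the paper also uses the variational bound $\lVert\O v\lVert_{2}\geq\sigma_{K}(\O)\lVert v\lVert_{2}$ with the choice $v=e_{x_{1}}-e_{x_{2}}$ (stated there as ``an appropriate choice of $v$'') together with Lemma~\ref{lem:sigmastar}, and the orthonormality of $\Phi_{M}$ (i.e.\ Bessel's inequality) for the second claim. Your write-up simply makes these steps more explicit.
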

\begin{proof}
Observe that $\lVert\O v\lVert_{2}\geq\sigma_{K}(\O)\lVert v\lVert_{2}$. With an appropriate choice of $v$ and using Lemma~\ref{lem:sigmastar} this proves the first inequality. As $\Phi_{M}$ is an orthonormal family,  $\lVert \O(\ldotp,x)\lVert_{2}\leq\lVert f^{\star}_{x}\lVert_{2}$ which proves the second claim.
\end{proof}

\begin{lemma}
\label{lem:BorneM_INFTY2}
For all $M\ge 1$, 
\[
\lVert \M\lVert_{\infty,2}\eqdef\displaystyle\max_{\lVert v\lVert_{2}=1}\left\lVert \sum_{b=1}^{M}v_{b}\M(\ldotp,b,\ldotp)\right\lVert
\leq\lVert g^{\star}\lVert_{2}\eqsp,
\] 
where $\Vert\cdot\Vert_{\infty,2}$ is defined in \eqref{eq:def:infty:2}.
\end{lemma}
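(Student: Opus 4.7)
The plan is to identify $\M$ with integration against the joint density of three consecutive observations and then bound the operator norm by duality plus Cauchy--Schwarz. First I would observe that under {\bf [H\ref{assum:stationary}]}, the joint density of $(Y_{1},Y_{2},Y_{3})$ with respect to $\L^{\otimes 3}$ is exactly $g^{\star}$ given in the theorem, so
\[
\M(a,b,c)=\int \varphi_{a}(y_{1})\varphi_{b}(y_{2})\varphi_{c}(y_{3})\,g^{\star}(y_{1},y_{2},y_{3})\,\d y_{1}\,\d y_{2}\,\d y_{3}\eqsp.
\]
For any $v\in\R^{M}$ with $\lVert v\lVert_{2}=1$, define $h_{v}\eqdef\sum_{b=1}^{M}v_{b}\varphi_{b}$. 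Since $\Phi_{M}$ is orthonormal, $\lVert h_{v}\lVert_{2}=\lVert v\lVert_{2}=1$. Then the $(a,c)$-entry of $\sum_{b}v_{b}\M(\cdot,b,\cdot)$ equals
\[
\int \varphi_{a}(y_{1})\,h_{v}(y_{2})\,\varphi_{c}(y_{3})\,g^{\star}(y_{1},y_{2},y_{3})\,\d y_{1}\,\d y_{2}\,\d y_{3}\eqsp.
\]

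Next I would use the variational characterization of the operator norm: for any matrix $A\in\R^{M\times M}$, $\lVert A\lVert=\sup_{\lVert u\lVert_{2}=\lVert w\lVert_{2}=1}u^{\top}A w$. Applying this with $A\eqdef\sum_{b}v_{b}\M(\cdot,b,\cdot)$ and introducing $h_{u}$ and $h_{w}$ as above yields
\[
\left\lVert\sum_{b=1}^{M}v_{b}\M(\cdot,b,\cdot)\right\lVert
=\sup_{\lVert u\lVert_{2}=\lVert w\lVert_{2}=1}
\int h_{u}(y_{1})\,h_{v}(y_{2})\,h_{w}(y_{3})\,g^{\star}(y_{1},y_{2},y_{3})\,\d y_{1}\,\d y_{2}\,\d y_{3}\eqsp.
\]

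Finally, the Cauchy--Schwarz inequality on $\mathrm{L}^{2}(\Y^{3},\L^{\otimes 3})$ applied to the pair $(h_{u}\otimes h_{v}\otimes h_{w},\,g^{\star})$ bounds the right-hand side by $\lVert h_{u}\otimes h_{v}\otimes h_{w}\lVert_{2}\,\lVert g^{\star}\lVert_{2}$, and by Fubini together with the identity $\lVert h_{u}\lVert_{2}=\lVert h_{v}\lVert_{2}=\lVert h_{w}\lVert_{2}=1$ we obtain $\lVert h_{u}\otimes h_{v}\otimes h_{w}\lVert_{2}=1$. Taking the supremum over $v$ with $\lVert v\lVert_{2}=1$ gives $\lVert\M\lVert_{\infty,2}\leq\lVert g^{\star}\lVert_{2}$, which is the announced bound. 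There is no real obstacle here; the only point to be careful about is invoking the orthonormality of $\Phi_{M}$ to pass from the $\ell^{2}$ norm of coefficients to the $\mathrm{L}^{2}$ norm of functions, both in the definition of $h_{v}$ and in the tensor-product norm computation.
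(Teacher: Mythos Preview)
Your proof is correct and reaches the same bound as the paper, but by a slightly different route. The paper does not use the variational characterization of the operator norm; instead it applies the triangle inequality and Cauchy--Schwarz to get $\lVert\M\lVert_{\infty,2}\le\bigl(\sum_{b}\lVert\M(\cdot,b,\cdot)\lVert^{2}\bigr)^{1/2}$, then crudely bounds each operator norm by the Frobenius norm, and finally recognizes $\bigl(\sum_{a,b,c}\M(a,b,c)^{2}\bigr)^{1/2}$ as the $\ell^{2}$-norm of the coefficients of $g^{\star}$ in the orthonormal family $(\varphi_{a}\otimes\varphi_{b}\otimes\varphi_{c})_{a,b,c}$, which is at most $\lVert g^{\star}\lVert_{2}$ by Bessel's inequality. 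Your duality argument is more direct: by pairing against unit vectors on all three modes simultaneously and applying a single Cauchy--Schwarz in $\mathrm{L}^{2}(\Y^{3})$, you avoid the detour through the Frobenius norm. Both arguments are elementary and yield the identical constant, so there is no practical gain either way; yours is just a cleaner presentation.
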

\begin{proof}
As for all $x\in\X$, $\fstar_x\in\mathrm{L}^{2}(\Y,\L)$, $g^{\star}\in\mathrm L^{2}(\Y^{3},{\L}^{\otimes 3})$. Denote by $\langle\ldotp,\ldotp\rangle_{\mathrm L^{2}(\Y^{3},{\L}^{\otimes 3})}$ the inner product of $\mathrm L^{2}(\Y^{3},{\L}^{\otimes 3})$. As $\varphi_{a,b,c}(y_{1},y_{2},y_{3})\eqdef\varphi_{a}(y_{1})\varphi_{b}(y_{2})\varphi_{c}(y_{3})$ is an orthonormal family of $\mathrm L^{2}(\Y^{3},{\L}^{\otimes 3})$,
\begin{align*}
\lVert \M\lVert_{\infty,2}&=\max_{\lVert v\lVert_{2}=1}\left\lVert \sum_{b=1}^{M}v_{b}\M(\ldotp,b,\ldotp)\right\lVert\leq\max_{\lVert v\lVert_{2}=1} \sum_{b=1}^{M}\lvert v_{b}\lvert\lVert\M(\ldotp,b,\ldotp)\lVert\,,\\
&\leq\left(\sum_{b=1}^{M}\lVert\M(\ldotp,b,\ldotp)\lVert^{2}\right)^{1/2}\leq\left(\sum_{b=1}^{M}\lVert\M(\ldotp,b,\ldotp)\lVert_{F}^{2}\right)^{1/2}\,,\\
&=\left(\sum_{a,b,c=1}^{M}\E\left[\varphi_{a}(Y_{1})\varphi_{b}(Y_{2})\varphi_{c}(Y_{3})\right]^{2}\right)^{1/2}=\left(\sum_{a,b,c=1}^{M}\langle g^{\star},\varphi_{a,b,c}\rangle_{\mathrm L^{2}(\Y^{3},{\L}^{\otimes 3})}^{2}\right)^{1/2}\leq\lVert g^{\star}\lVert_{2}\,.
\end{align*}
using Cauchy-Schwarz inequality.
\end{proof}

\begin{lemma}
For all $M\ge 1$, $\lVert \hatM-\M\lVert_{\infty,2}\leq\lVert \hatM-\M\lVert_{F}$, where $\Vert\cdot\Vert_{\infty,2}$ is defined in \eqref{eq:def:infty:2}.
\end{lemma}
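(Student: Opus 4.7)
The plan is to unfold the definition of $\lVert \cdot \lVert_{\infty,2}$, bound the operator norm by the Frobenius norm, and then apply Cauchy--Schwarz in the summation index $b$. Write $D \eqdef \hatM - \M$ for brevity and fix $v \in \R^M$ with $\lVert v \lVert_2 = 1$. The inner object $\sum_{b=1}^M v_b\, D(\cdot,b,\cdot)$ is an $(M \times M)$ matrix, and the outer norm appearing in the definition of $\lVert D \lVert_{\infty,2}$ is its operator norm, which is dominated by its Frobenius norm.

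The first step is therefore the standard inequality
\[
\left\lVert \sum_{b=1}^M v_b D(\cdot,b,\cdot) \right\lVert
\leq
\left\lVert \sum_{b=1}^M v_b D(\cdot,b,\cdot) \right\lVert_F
=
\Bigl( \sum_{a,c=1}^M \bigl(\textstyle\sum_{b=1}^M v_b D(a,b,c)\bigr)^{\!2} \Bigr)^{1/2}.
\]
The second step is Cauchy--Schwarz applied to the inner sum in $b$: since $\lVert v \lVert_2 = 1$,
\[
\Bigl( \sum_{b=1}^M v_b D(a,b,c) \Bigr)^{\!2}
\leq
\Bigl( \sum_{b=1}^M v_b^2 \Bigr) \Bigl( \sum_{b=1}^M D(a,b,c)^2 \Bigr)
= \sum_{b=1}^M D(a,b,c)^2.
\]
Summing over $a$ and $c$ gives $\sum_{a,b,c} D(a,b,c)^2 = \lVert D \lVert_F^2$, and taking square roots followed by the supremum over unit $v$ yields $\lVert \hatM - \M \lVert_{\infty,2} \leq \lVert \hatM - \M \lVert_F$.

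There is no real obstacle here: the statement is essentially a combination of the elementary fact $\lVert \cdot \lVert \leq \lVert \cdot \lVert_F$ for matrices with the unit-norm extraction of $v$ via Cauchy--Schwarz. The only thing to be careful about is not to confuse the two norms appearing in the definition of $\lVert \cdot \lVert_{\infty,2}$ (operator norm on the $(M\times M)$ slice, Euclidean norm on the contracting vector $v$), which is exactly what makes the chain of inequalities clean.
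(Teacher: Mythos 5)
Your proof is correct and uses essentially the same ingredients as the paper's (Cauchy--Schwarz in the index $b$ together with the bound of an operator norm by a Frobenius norm); the only difference is that you apply $\lVert\cdot\lVert\le\lVert\cdot\lVert_F$ to the full combination $\sum_b v_b D(\cdot,b,\cdot)$ and then use Cauchy--Schwarz entrywise, whereas the paper first uses the triangle inequality over $b$ and then Cauchy--Schwarz on the vector of slice norms. Both routes give the identical bound, so no further comment is needed.
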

\begin{proof}
For all $M\ge 1$,
\begin{align*}
\lVert \hatM-\M\lVert_{\infty,2}&=\max_{\lVert v\lVert_{2}=1}\left\lVert \sum_{b=1}^{M}v_{b}(\hatM-\M)(\ldotp,b,\ldotp)\right\lVert\leq\max_{\lVert v\lVert_{2}=1} \sum_{b=1}^{M}\lvert v_{b}\lvert\left\lVert(\hatM-\M)(\ldotp,b,\ldotp)\right\lVert\,,\\
&\leq\left(\sum_{b=1}^{M}\left\lVert(\hatM-\M)(\ldotp,b,\ldotp)\right\lVert^{2}\right)^{1/2}\leq\left(\sum_{b=1}^{M}\left\lVert(\hatM-\M)(\ldotp,b,\ldotp)\right\lVert_{F}^{2}\right)^{1/2}\,,\\
&=\left\lVert \hatM-\M\right\lVert_{F}\,.
\end{align*}
using Cauchy-Schwarz inequality.
\end{proof}

\begin{lemma}
\label{lem:sigmaKP}
Under {\bf [H\ref{assum:Qstarprime}']} and {\bf [H\ref{assum:stationary}]}, for all $M\ge 1$, $\sigma_{K}(\Pj)\geq\pi_{\mathrm{min}}\sigma_{K}^{2}(\O)\sigma_{K}(\Q^{2})$. If {\bf [H\ref{assum:linearly:ind}]}  holds, then, for all $M\geq M_{\F^{\star}}$, 
\[
\sigma_{K}(\Pj)\geq{\sigmaFamily^2\pistar_{\mathrm{min}}\sigma_{K}({\Qstar}^{2})}\eqsp,
\]
where $M_{\F^{\star}}$ and $\sigmaFamily$ are defined in Lemma~\ref{lem:sigmastar}.
\end{lemma}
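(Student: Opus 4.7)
The plan is to factor $\Pj$ as a product of three matrices and then invoke the multiplicative lower bound on singular values. First, condition on $(X_1,X_3)$ using the Markov structure and the conditional independence of the observations: for any $a,c\in\{1,\dots,M\}$,
\[
\Pj(a,c)=\E[\varphi_{a}(Y_{1})\varphi_{c}(Y_{3})]=\sum_{x_{1},x_{3}\in\X}\pistar(x_{1})\,(\Qstar)^{2}(x_{1},x_{3})\,\E[\varphi_{a}(Y_{1})\mid X_{1}=x_{1}]\,\E[\varphi_{c}(Y_{3})\mid X_{3}=x_{3}],
\]
which, by definition of $\O$, yields the matrix identity $\Pj=\O\,\D{\pistar}\,(\Qstar)^{2}\,\O^{\top}$. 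Note that $\Pj$ has rank at most $K$, so its $K$-th singular value is the smallest potentially nonzero one.

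Next, I would apply the standard lower bound $\sigma_{K}(AB)\ge \sigma_{K}(A)\sigma_{K}(B)$, valid whenever $A$ has exactly $K$ columns (equivalently $B$ has exactly $K$ rows): indeed, since $A^{\top}A\succeq\sigma_{K}^{2}(A)\,\mathrm{Id}_{K}$, one has $B^{\top}A^{\top}AB\succeq\sigma_{K}^{2}(A)\,B^{\top}B$, which gives $\sigma_{K}^{2}(AB)=\lambda_{K}(B^{\top}A^{\top}AB)\geq \sigma_{K}^{2}(A)\sigma_{K}^{2}(B)$. Applying this twice — first with $A=\O$ and $B=\D{\pistar}(\Qstar)^{2}\O^{\top}$, and then with $A=\D{\pistar}(\Qstar)^{2}$ and $B=\O^{\top}$ (after transposing) — and finally using that $\D{\pistar}$ is diagonal with entries bounded below by $\pistar_{\min}$ under \textbf{[H\ref{assum:Qstarprime}']} and \textbf{[H\ref{assum:stationary}]}, I get
\[
\sigma_{K}(\Pj)\;\geq\;\sigma_{K}^{2}(\O)\,\sigma_{K}(\D{\pistar}(\Qstar)^{2})\;\geq\;\sigma_{K}^{2}(\O)\,\pistar_{\min}\,\sigma_{K}((\Qstar)^{2}),
\]
which is the first claim.

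For the second claim, I simply combine this bound with Lemma~\ref{lem:sigmastar}: for $M\geq M_{\F^{\star}}$ and under \textbf{[H\ref{assum:linearly:ind}]}, $\sigma_{K}(\O)\geq\sigmaFamily$, so $\sigma_{K}^{2}(\O)\geq\sigmaFamily^{2}$, yielding $\sigma_{K}(\Pj)\geq \sigmaFamily^{2}\,\pistar_{\min}\,\sigma_{K}((\Qstar)^{2})$.

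There is no real obstacle here: the only delicate point is being careful with the dimensions when applying $\sigma_{K}(AB)\ge\sigma_{K}(A)\sigma_{K}(B)$, since this inequality requires that the common dimension equal $K$ (otherwise it can fail). The factorization $\Pj=\O\D{\pistar}(\Qstar)^{2}\O^{\top}$ is exactly structured so that all intermediate products pass through $\R^{K}$, which is what makes the chain of inequalities go through cleanly.
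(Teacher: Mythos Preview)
Your proof is correct and follows essentially the same approach as the paper: both start from the factorization $\Pj=\O\,\D{\pistar}(\Qstar)^{2}\O^{\top}$ and then chain the multiplicative lower bound $\sigma_{K}(AB)\ge\sigma_{K}(A)\sigma_{K}(B)$. The only cosmetic difference is that the paper first passes to a $K\times K$ problem by writing $\sigma_{K}(\Pj)=\sigma_{K}(\U^{\top}\Pj\U)$ with $\U$ an orthonormal basis of the range of $\O$ (so that $\sigma_{K}(\U^{\top}\O)=\sigma_{K}(\O)$), whereas you apply the inequality directly to the rectangular factors while tracking the dimension condition; the content is the same.
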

\begin{proof}
By Lemma \ref{lem:ExpressionMetP} and \eqref{eq:pistarmin}, 
\begin{align*}
\sigma_{K}(\Pj) =\sigma_{K}(\U^{\top}\Pj\U) &=\sigma_{K}((\U^{\top}\O)\D{\pistar}{\Qstar}^{2}(\U^{\top}\O)^{\top})\eqsp,\\
&\geq{\sigma_{K}(\U^{\top}\O)}{\sigma_{K}(\D{\pistar}{\Qstar}^{2}(\U^{\top}\O)^{\top})}\eqsp,\\
&=\sigma_{K}(\O){\sigma_{K}(\D{\pistar}{\Qstar}^{2}(\U^{\top}\O)^{\top})}\eqsp,\\
&\geq\sigma_{K}(\D{\pistar})\sigma_{K}(\O)\sigma_{K}((\U^{\top}\O)^{\top})\sigma_{K}({\Qstar}^{2})\eqsp,\\
&=\pistar_{\mathrm{min}}\sigma_{K}^{2}(\O)\sigma_{K}({\Qstar}^{2})\eqsp,
\end{align*}
which concludes the proof.
\end{proof}

\subsection*{First step: Estimation of the emission laws using a spectral method} 
Appendix \ref{sec:AppendixConcentration} shows that:
\begin{eqnarray*}
\P\Big[\lVert\hatl-\l\lVert_{F}\geq\concentration\eta_1(\Phi_{M})/\sqrt{p}\Big]\leq\delta'\eqsp, &  \P\Big[\lVert\hatM-\M\lVert_{F}\geq\concentration\eta_3(\Phi_{M})/\sqrt{p}\Big]\leq\delta'\eqsp,\\
\P\Big[\lVert\hatN-\N\lVert_{F}\geq\concentration \eta_2(\Phi_{M})/\sqrt{p}\Big]\leq\delta'\eqsp, & \P\Big[\lVert\hatPj-\Pj\lVert_{F}\geq\concentration \eta_2(\Phi_{M})/\sqrt{p}\Big]\leq\delta'\eqsp.
\end{eqnarray*}
Using the preliminary lemmas of Section \ref{sec:Preliminaires} and the elementary fact that $M\eta_{1}(\Phi_{M})\leq\sqrt M\eta_2(\Phi_{M})\leq\eta_3(\Phi_{M})$, deduce that \eqref{eq:H2AHK} and \eqref{eq:H3AHK} along with $3\lVert\hatPj-\Pj\lVert\leq\sigma_{K}(\Pj)$ are satisfied when $M\geq M_{\F^{\star}}$ and $p\geq \borneN$ where:
\[
\borneN\eqdef\frac{942}{\delta^{2}}\frac{\kappa^{8}(\Qstar)K^{10}}{{\pistar}_{\mathrm{min}}^{2}\sigma_{K}^{2}({\Qstar}^{2})}\frac{\kappa^{8}_{\F^{\star}}}{\sigmaFamily^6}\Big(1+\frac{\lVert g^{\star}\lVert_{2}}{\pistar_{\mathrm{min}}\sigmaFamily^2\sigma_{K}({\Qstar}^{2})}\frac1{\sqrt M}\Big)^{2}\concentration^{2} \eta_3(\Phi_{M})^{2}\,.
\]
Using Theorem \ref{thm:HSK}, with probability greater than $1-2\delta-4\delta'$, there exists a permutation $\tau$ satisfying for any $M\geq M_{\F^{\star}}$, $p\geq \borneN$ and $x\in\X$,
\eq
\notag
\lVert\O(\ldotp,x)-\hatO(\ldotp,\tau(x))\lVert_{2}\leq\mathcal {C}_{M}(\Qstar,\F^{\star},\delta)\concentration\eta_3(\Phi_{M})/\sqrt p\eqsp.
\qe
\noindent
This proves the first part of Theorem~\ref{thm:ThmMain1}.

\subsection*{Second step: Preliminary estimation of the stationary density using a spectral method} 
For sake of readability, assume that $\tau$ is the identity permutation. Observe that: 
\[
\borneNfinale\geq\borneN\,.
\]
Recall $\tilde\pi\eqdef\big(\hat\U^{\top}\hatO\big)^{-1}\hat\U^{\top}\hatl$
and $\pistar=\big(\hat\U^{\top}\O\big)^{-1}\hat\U^{\top}\l$. 
\begin{lemma}
\label{lem:estimationdePi} 
With probability greater than $1-2\delta-4\delta'$, if $p>\borneNfinale$ then,
\begin{multline*}
\lVert\tilde\pi-\pistar\lVert_{2}
\leq
\frac{2}{\sqrt 3 \sigmaFamily}\concentration\frac{ \eta_{1}(\Phi_{M})}{\sqrt p}\\ 
+\frac{2}{\sqrt 3 \sigmaFamily}\frac{\sqrt{\borneNfinale}}{\sqrt p-\sqrt{\borneNfinale}}\left(\max_{x\in\X}\lVert f^{\star}_{x}\lVert_{2}\,+\,\concentration\frac{ \eta_{1}(\Phi_{M})}{\sqrt p}\right)\,.
\end{multline*}
\end{lemma}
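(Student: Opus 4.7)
The plan is to view $\tilde\pi=(\Uhat^{\top}\hatO)^{-1}\Uhat^{\top}\hatl$ and $\pistar$ as solutions of the perturbed and unperturbed $K\times K$ linear systems of the form $B\xi=\eta$. Since $\l=\O\pistar$ by the very definitions of $\l$ and $\O$, one has $\pistar=(\Uhat^{\top}\O)^{-1}\Uhat^{\top}\l$ as soon as the $K\times K$ matrix $A\eqdef\Uhat^{\top}\O$ is invertible. Setting $E\eqdef\Uhat^{\top}(\hatO-\O)$, so that $\Uhat^{\top}\hatO=A+E$, the classical identity $(A+E)^{-1}=A^{-1}-(A+E)^{-1}E A^{-1}$ gives the decomposition
\[
\tilde\pi-\pistar \;=\; A^{-1}\Uhat^{\top}(\hatl-\l)\;-\;(A+E)^{-1}\Uhat^{\top}(\hatO-\O)\,A^{-1}\Uhat^{\top}\hatl\,.
\]

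I would then take Euclidean norms, use that $\Uhat$ has orthonormal columns, $\|A^{-1}\|=1/\sigma_K(A)$ and $\|(A+E)^{-1}\|=1/\sigma_K(\Uhat^{\top}\hatO)$ to reach
\[
\|\tilde\pi-\pistar\|_{2}\le\frac{\|\hatl-\l\|_{2}}{\sigma_{K}(A)}\;+\;\frac{\|\hatO-\O\|_{F}\,\|\hatl\|_{2}}{\sigma_{K}(\Uhat^{\top}\hatO)\,\sigma_{K}(A)}\,.
\]
Three ingredients then enter: the concentration $\|\hatl-\l\|_{2}\le\concentration\,\eta_{1}(\Phi_{M})/\sqrt p$ from Appendix~\ref{sec:AppendixConcentration}; Bessel's inequality applied to $\l=\O\pistar$, which gives $\|\l\|_{2}\le\max_{x\in\X}\|\fstar_{x}\|_{2}$, hence $\|\hatl\|_{2}\le\max_{x}\|\fstar_{x}\|_{2}+\concentration\,\eta_{1}(\Phi_{M})/\sqrt p$; and the step-one bound \eqref{eq:ControldeOthm}, which yields $\|\hatO-\O\|_{F}\le\sqrt K\,\mathcal C_{M}(\Qstar,\F^{\star},\delta)\,\concentration\,\eta_{3}(\Phi_{M})/\sqrt p$ on the same event of probability $\ge 1-2\delta-4\delta'$. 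A direct unpacking of $\borneNfinale$ shows that its definition is tuned so that, under the lower bound $\sigma_{K}(A)\ge(\sqrt 3/2)\sigmaFamily$, the ratio $\|\hatO-\O\|_{F}/\sigma_{K}(A)\le\sqrt{\borneNfinale/p}$ as soon as $p\ge\borneNfinale$. Weyl's inequality then gives $\sigma_{K}(\Uhat^{\top}\hatO)\ge\sigma_{K}(A)\big(1-\sqrt{\borneNfinale/p}\big)$, and plugging in $\sigma_{K}(A)\ge(\sqrt 3/2)\sigmaFamily$ produces exactly the factors $2/(\sqrt 3\,\sigmaFamily)$ and $\sqrt{\borneNfinale}/(\sqrt p-\sqrt{\borneNfinale})$ appearing in the claimed bound.

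The main obstacle is therefore the lower bound $\sigma_{K}(\Uhat^{\top}\O)\ge(\sqrt 3/2)\sigmaFamily$. This does not follow from $\sigma_{K}(\O)\ge\sigmaFamily$ (Lemma~\ref{lem:sigmastar}) alone, because $\Uhat$ need not span $\mathrm{col}(\O)$: by construction of Algorithm~\ref{alg:Spectral}, only $\hatO=\Uhat\Theta\widehat\Lambda$ does. The resolution is to observe that, under the working assumptions, $\mathrm{col}(\O)$ coincides with the top-$K$ right singular subspace of $\Pj$, through the factorization $\U^{\top}\Pj\U=(\U^{\top}\O)\D{\pistar}(\Qstar)^{2}(\U^{\top}\O)^{\top}$ used in Lemma~\ref{lem:sigmaKP}. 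Denoting by $\U^{\star}$ the associated $M\times K$ orthonormal basis, the condition $3\|\hatPj-\Pj\|\le\sigma_{K}(\Pj)$ -- which is already in force on the step-one event since $\borneN\le\borneNfinale$ -- places one in the Davis-Kahan regime and yields an operator-norm bound on $\Uhat\Uhat^{\top}-\U^{\star}\U^{\star\top}$. Using $\U^{\star}\U^{\star\top}\O=\O$ one has $\lambda_{K}(\O^{\top}\U^{\star}\U^{\star\top}\O)=\sigma_{K}^{2}(\O)\ge\sigmaFamily^{2}$, so that the identity $\sigma_{K}^{2}(\Uhat^{\top}\O)=\lambda_{K}(\O^{\top}\Uhat\Uhat^{\top}\O)$ together with a second application of Weyl's inequality delivers the required constant $\sqrt 3/2$ once $p\ge\borneNfinale$, closing the proof.
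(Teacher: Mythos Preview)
Your proposal is essentially the paper's argument, with the same decomposition and the same three ingredients. The paper writes $\tilde\pi-\pistar=[(\Uhat^{\top}\hatO)^{-1}-(\Uhat^{\top}\O)^{-1}]\Uhat^{\top}\hatl+(\Uhat^{\top}\O)^{-1}\Uhat^{\top}(\hatl-\l)$, bounds $\|(\Uhat^{\top}\hatO)^{-1}-(\Uhat^{\top}\O)^{-1}\|$ via Theorem~\ref{thm:InversePerturbation}, and obtains the singular value lower bound $\sigma_{K}(\Uhat^{\top}\O)\ge(\sqrt3/2)\sigma_{K}(\O)$ directly from claim~(iii) of Lemma~\ref{lem:Wedinsetal}, namely $\sigma_{K}(\Uhat^{\top}\U)\ge(1-\varepsilon_{\Pj}^{2})^{1/2}\ge\sqrt3/2$. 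Your resolvent identity plus Weyl on $\sigma_{K}(\Uhat^{\top}\hatO)$ is equivalent to the paper's use of Theorem~\ref{thm:InversePerturbation}, and your identification of the singular value bound as the only real obstacle is exactly right.

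There is one technical slip in your last step. Applying Weyl to $\O^{\top}\Uhat\Uhat^{\top}\O=\O^{\top}\U^{\star}\U^{\star\top}\O+\O^{\top}(\Uhat\Uhat^{\top}-\U^{\star}\U^{\star\top})\O$ only gives
\[
\sigma_{K}^{2}(\Uhat^{\top}\O)\ge\sigma_{K}^{2}(\O)-\|\O\|^{2}\,\|\Uhat\Uhat^{\top}-\U^{\star}\U^{\star\top}\|\,,
\]
which degrades by $\kappa^{2}(\O)$ and does \emph{not} yield the constant $\sqrt3/2$ under $\varepsilon_{\Pj}\le1/2$ alone. The clean route---implicit in the paper---is to factor $\Uhat^{\top}\O=(\Uhat^{\top}\U^{\star})(\U^{\star\top}\O)$ and use the product bound $\sigma_{K}(AB)\ge\sigma_{K}(A)\sigma_{K}(B)$ for $K\times K$ matrices: since $\U^{\star}$ spans $\mathrm{col}(\O)$ one has $\sigma_{K}(\U^{\star\top}\O)=\sigma_{K}(\O)$, and $\sigma_{K}(\Uhat^{\top}\U^{\star})$ is the cosine of the largest principal angle, bounded below by $(1-\varepsilon_{\Pj}^{2})^{1/2}\ge\sqrt3/2$ via Wedin (equivalently Davis--Kahan). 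With this correction your argument goes through and matches the paper exactly.
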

\begin{proof}
Set $A=\hat\U^{\top}\O$, $\tilde A=\hat\U^{\top}\hatO$ and $B=\hat\U^{\top}(\O-\hatO)$. Then,
\[
\lVert B\lVert\leq\lVert\O-\hatO\lVert\leq\lVert\O-\hatO\lVert_{F}\leq\sqrt K\max_{x\in\X}\lVert\O(\ldotp,x)-\hatO(\ldotp,x)\lVert_{2}\,,
\]
which gives $\lVert B\lVert\leq\sqrt K\mathcal {C}_{M}(\Qstar,\F^{\star},\delta)\concentration{\eta_{3}(\Phi_{M})}/{\sqrt p}$. Similarly, by claim (iii) of Lemma~\ref{lem:Wedinsetal}:
\[
\lVert A^{-1}B\lVert\leq\lVert A^{-1}\lVert\lVert B\lVert\leq\sigma_{K}^{-1}(A)\lVert B\lVert\leq\frac{2\sqrt K\max_{x\in\X}\lVert\O(\ldotp,x)-\hatO(\ldotp,x)\lVert_{2}}{\sqrt 3\sigma_{K}(\O)}\,,
\]
so that
\[
\lVert A^{-1}B\lVert\leq\frac{2\sqrt K}{\sqrt 3\sigmaFamily}\mathcal {C}_{M}(\Qstar,\F^{\star},\delta)\concentration\frac{\eta_{3}(\Phi_{M})}{\sqrt p}\,.
\]
Observe that the condition on $p$ and $M$ ensures that $\lVert A^{-1}B\lVert<1$. Apply Theorem \ref{thm:InversePerturbation} to get that:
\eq
\label{eq:ControlInvers1}
\lVert(\hat\U^{\top}\O)^{-1}-(\hat\U^{\top}\hatO)^{-1}\lVert\leq\frac{2}{\sqrt 3 \sigmaFamily}\frac{\sqrt{\borneNfinale}}{\sqrt p-\sqrt{\borneNfinale}}\,.
\qe
Furthermore, using \eqref{eq:ControlInvers1}:
\begin{align*}
\lVert\tilde\pi-\pistar\lVert_{2}&=\lVert\big(\hat\U^{\top}\hatO\big)^{-1}\hat\U^{\top}\hatl-\big(\hat\U^{\top}\O\big)^{-1}\hat\U^{\top}\l\lVert_{2}\\
&=\lVert\big(\hat\U^{\top}\hatO\big)^{-1}\hat\U^{\top}\hatl-\big(\hat\U^{\top}\O\big)^{-1}\hat\U^{\top}\hatl+\big(\hat\U^{\top}\O\big)^{-1}\hat\U^{\top}\hatl-\big(\hat\U^{\top}\O\big)^{-1}\hat\U^{\top}\l\lVert_{2}\\
&\leq\lVert(\hat\U^{\top}\O)^{-1}-(\hat\U^{\top}\hatO)^{-1}\lVert\lVert\hatl\lVert_{2}+\lVert A^{-1}\lVert\lVert\hatl-\l\lVert_{2}\\
&\leq\frac{2}{\sqrt 3 \sigmaFamily}\left(\lVert\hatl-\l\lVert_{2}+\frac{\sqrt{\borneNfinale}}{\sqrt p-\sqrt{\borneNfinale}}(\lVert\l\lVert_{2}+\lVert\hatl-\l\lVert_{2})\right)\,.
\end{align*}
Denote $f^{\star}_{Y_{1}}=\sum_{x_1\in\X} \pi(x_1) f^{\star}_{k_1}(y_1)$ the density of $Y_1$. Observe that:
\begin{align*}
\lVert\l\lVert_{2}&=\left(\sum_{a=1}^{M}\E\left[\varphi_{a}(Y_{1})\right]^{2}\right)^{1/2}=\left(\sum_{a=1}^{M}\langle f^{\star}_{Y_{1}},\varphi_{a}\rangle^{2}\right)^{1/2}\leq\lVert f^{\star}_{Y_{1}}\lVert_{2}\leq\max_{x\in\X}\lVert f^{\star}_{x}\lVert_{2}\eqsp,
\end{align*}
which concludes the proof.
\end{proof}
\noindent This results allows to state that for all $p\geq4\borneNfinale$,
\eq
\label{eq:EstimationTildeL}
\lVert\pistar-\mathbb P_{\tau}\tilde\pi\lVert_{2}\leq\mathcal {D}'_{M}(\Qstar,\F^{\star},\delta)\concentration\eta_3(\Phi_{M})/\sqrt p\eqsp.
\qe
%with notation of Theorem \ref{thm:ThmMain1}. 

\subsection*{Third step: Estimation of the transition matrix using a spectral method} 
Denote $\tilde\Q\eqdef\big(\hat\U^{\top}\hatO\D{\tilde\pi}\big)^{-1}\hat\U^{\top}\hatN\hat\U\big(\hatO^{\top}\hat\U\big)^{-1}$. Observe $\hat\Q=\Pi_{TM}(\tilde\Q)$ and $\Qstar=\Pi_{TM}(\Qstar)$ and hence, by non-expansivity of the projection onto convex sets, $\lVert\hat\Q-\Qstar\lVert_{F}\leq\lVert\tilde\Q-\Qstar\lVert_{F}$. Moreover, notice that: 
\[
\borneNfinalebis\geq4\borneNfinale\geq\borneN\,.
\]

\begin{lemma}
\label{lem:EstimationQ}  
With probability greater than $1-2\delta-4\delta'$, if $p\geq\borneNfinalebis$ then
\[
\lVert\tilde\Q-\Qstar\lVert\leq \frac{8\lVert f^{\star}_{(Y_{1},Y_{3})}\lVert_{2}}{3\sigmaFamily^{2}{\pistar}_{\mathrm{min}}^{2}}\lVert\tilde\pi-\pistar\lVert_{2}+ \frac{2}{\pistar_{\mathrm{min}}}\tilde{\mathcal E}_{M}(\Qstar,\F^{\star},\delta)\concentration\frac{\eta_{3}(\Phi_{M})}{\sqrt p}\,,
\]
where
\[
\tilde{\mathcal E}_{M}(\Qstar,\F^{\star},\delta)\eqdef\frac{16}{{\sqrt3 \sigmaFamily^{2}}}\bigg[{\sqrt K}\mathcal C_{M}(\Qstar,\F^{\star},\delta)\lVert f^{\star}_{(Y_{1},Y_{3})}\lVert_{2}+\frac{5}{4\sqrt{3M}}\bigg]\,.
\]
\end{lemma}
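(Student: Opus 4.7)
The starting point is the matrix factorization $\N=\O\D{\pistar}\Qstar\O^{\top}$, which follows directly from conditioning on $(X_1,X_2)$ and the stationarity assumption. Left-multiplying by $\hat\U^{\top}$ and right-multiplying by $\hat\U$ and then inverting the outer factors yields the population identity
\[
\Qstar=\bigl(\hat\U^{\top}\O\D{\pistar}\bigr)^{-1}\hat\U^{\top}\N\hat\U\bigl(\O^{\top}\hat\U\bigr)^{-1}\eqsp,
\]
which mirrors the definition of $\tilde\Q$ with the sample quantities $\hatO$, $\tilde\pi$ and $\hatN$ in place of $\O$, $\pistar$ and $\N$. I then introduce the abbreviations $A\eqdef\hat\U^{\top}\O\D{\pistar}$, $\tilde A\eqdef\hat\U^{\top}\hatO\D{\tilde\pi}$, $B\eqdef\hat\U^{\top}\N\hat\U$, $\tilde B\eqdef\hat\U^{\top}\hatN\hat\U$, $C\eqdef\O^{\top}\hat\U$, $\tilde C\eqdef\hatO^{\top}\hat\U$.

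With these abbreviations the algebraic identity
\[
\tilde\Q-\Qstar
=\bigl(\tilde A^{-1}-A^{-1}\bigr)\tilde B\tilde C^{-1}
+A^{-1}\bigl(\tilde B-B\bigr)\tilde C^{-1}
+A^{-1}B\bigl(\tilde C^{-1}-C^{-1}\bigr)
\]
holds, and the factorisation $A^{-1}B=\Qstar C$ simplifies the last term to $\Qstar(C-\tilde C)\tilde C^{-1}$. Using the standard perturbation identities $\tilde A^{-1}-A^{-1}=-\tilde A^{-1}(\tilde A-A)A^{-1}$ and $\tilde C^{-1}-C^{-1}=-\tilde C^{-1}(\tilde C-C)C^{-1}$, and writing $\tilde A-A=\hat\U^{\top}(\hatO-\O)\D{\tilde\pi}+\hat\U^{\top}\O\D{\tilde\pi-\pistar}$, it splits the error bound into three contributions: one proportional to $\lVert\tilde\pi-\pistar\lVert_{2}$, one proportional to $\lVert\hatO-\O\lVert$, and one proportional to $\lVert\hatN-\N\lVert$.

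The next step is to control the operator norms that appear as multiplicative factors. I would bound $\lVert A^{-1}\lVert\leq 1/(\pistarmin\,\sigma_{K}(\hat\U^{\top}\O))\leq 2/(\sqrt 3\,\sigmaFamily\pistarmin)$ via Lemma~\ref{lem:Wedinsetal}(iii), and derive the analogous bound for $\tilde A^{-1}$ and $\tilde C^{-1}$ using Theorem~\ref{thm:InversePerturbation}, which is valid once $p\geq\borneNfinalebis$, as this ensures the relevant perturbations stay below $1$ after the events $\{\lVert\hatO-\O\lVert\ldots\}$ and $\{\lVert\tilde\pi-\pistar\lVert_{2}\ldots\}$ of total probability at least $1-2\delta-4\delta'$ occur (these are precisely the events from the previous two steps). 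The term $\lVert\tilde B\lVert$ is controlled through $\lVert\tilde B\lVert\leq\lVert B\lVert+\lVert\hatN-\N\lVert$, and $\lVert B\lVert$ is in turn bounded by $\lVert f^{\star}_{(Y_{1},Y_{3})}\lVert_{2}$ via the same Cauchy--Schwarz computation used in Lemma~\ref{lem:BorneM_INFTY2}.

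Finally, I plug in the concentration bounds $\lVert\hatN-\N\lVert\leq\concentration\eta_{2}(\Phi_{M})/\sqrt{p}$ and $\lVert\hatO-\O\lVert\leq\sqrt{K}\mathcal C_{M}(\Qstar,\F^{\star},\delta)\concentration\eta_{3}(\Phi_{M})/\sqrt p$ from the first step of the proof of Theorem~\ref{thm:ThmMain1}, and use $\sqrt M\eta_{2}(\Phi_{M})\leq\eta_{3}(\Phi_{M})$ to absorb the $\hatN-\N$ term into the common $\eta_{3}(\Phi_{M})/\sqrt p$ scale (this is what produces the $5/(4\sqrt{3M})$ summand in $\tilde{\mathcal E}_{M}$). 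The principal obstacle is bookkeeping: one must keep the constant in front of $\lVert\tilde\pi-\pistar\lVert_{2}$ explicit (it comes only from the $\D{\tilde\pi-\pistar}$ piece of $\tilde A-A$ multiplied by $\lVert A^{-1}\lVert\,\lVert\tilde A^{-1}\lVert\,\lVert\tilde B\lVert\,\lVert\tilde C^{-1}\lVert$) while grouping all remaining terms into the single factor $\tilde{\mathcal E}_{M}(\Qstar,\F^{\star},\delta)\concentration\eta_{3}(\Phi_{M})/\sqrt p$, which ultimately provides the announced bound after using $\lVert\hat\Q-\Qstar\lVert_{F}\leq\lVert\tilde\Q-\Qstar\lVert_{F}$ from the non-expansivity of the projection onto the convex set of transition matrices.
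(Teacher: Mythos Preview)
Your approach is correct and uses the same ingredients (the factorisation $\N=\O\D{\pistar}\Qstar\O^{\top}$, Theorem~\ref{thm:InversePerturbation}, Lemma~\ref{lem:Wedinsetal}, the concentration bounds, and $\sqrt M\eta_{2}\le\eta_{3}$), but the paper organises the algebra differently and more economically. Instead of keeping three factors $A,B,C$ and telescoping across all of them, the paper pulls the diagonal out first: writing $\V=(\hat\U^{\top}\O)^{-1}\hat\U^{\top}$ and $\hat\V=(\hat\U^{\top}\hatO)^{-1}\hat\U^{\top}$, it uses $\Qstar=\D{\pistar}^{-1}\V\N\V^{\top}$ and $\tilde\Q=\D{\tilde\pi}^{-1}\hat\V\hatN\hat\V^{\top}$, then splits
\[
\Qstar-\tilde\Q=(\D{\pistar}^{-1}-\D{\tilde\pi}^{-1})\V\N\V^{\top}+\D{\tilde\pi}^{-1}\bigl(\V\N\V^{\top}-\hat\V\hatN\hat\V^{\top}\bigr)\,.
\]
This isolates the $\tilde\pi-\pistar$ dependence in a single term whose norm is directly $\lVert\D{\pistar}^{-1}-\D{\tilde\pi}^{-1}\lVert\,\lVert\V\lVert^{2}\,\lVert\N\lVert\le \tfrac{2}{{\pistarmin}^{2}}\lVert\tilde\pi-\pistar\lVert_{2}\cdot\tfrac{4}{3\sigmaFamily^{2}}\cdot\lVert f^{\star}_{(Y_{1},Y_{3})}\lVert_{2}$, giving the stated coefficient immediately; and it leaves a \emph{symmetric} perturbation $\V\N\V^{\top}-\hat\V\hatN\hat\V^{\top}$ that is expanded by setting $E=\hat\V-\V$, $F=\hatN-\N$ and collecting the seven cross terms. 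Your three-factor telescoping buries the $\tilde\pi$ perturbation inside $\tilde A-A$, which forces you to carry extra factors such as $\lVert\hat\U^{\top}\O\lVert$ and $\lVert\tilde C^{-1}\lVert$ alongside $\lVert\tilde A^{-1}\lVert\lVert A^{-1}\lVert$; this still yields a bound of the announced shape, but reaching the exact constant $8\lVert f^{\star}_{(Y_{1},Y_{3})}\lVert_{2}/(3\sigmaFamily^{2}{\pistarmin}^{2})$ requires either further simplifications (e.g.\ recognising $A^{-1}\tilde B\tilde C^{-1}\approx\Qstar$) or slightly looser constants.
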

\begin{proof}
Observe that \eqref{eq:EstimationOfL} shows that $\lVert\tilde\pi-\pistar\lVert_{2}\leq\pistar_{\mathrm{min}}/2$. Then, for any $x\in\X$: 
\eq
\label{eq:bornehatpimin}
\tilde\pi_{x}\geq\frac{\pistar_{\mathrm{min}}}2>0\,.
\qe
Set $\V=(\hat\U^{\top}\O)^{-1}\hat\U^{\top}$ and $\hat\V=(\hat\U^{\top}\hatO)^{-1}\hat\U^{\top}$. Note $\tilde\Q=\D{\tilde\pi}^{-1}\hat\V\hatN\hat\V^{\top}$ and: 
\[
\Q=\D{\pistar}^{-1}\V\N\V^{\top}\,.
\] 
Set $E=\hat\V-\V$ and $F=\hatN-\N$. Using \eqref{eq:ControlInvers1} yields:
\[
\lVert E\lVert\leq\frac{2}{\sqrt 3 \sigmaFamily}\frac{\sqrt{\borneNfinale}}{\sqrt p-\sqrt{\borneNfinale}}\leq\frac{8\sqrt K}{3 \sigmaFamily^{2}}\mathcal C_{M}(\Qstar,\F^{\star},\delta)\concentration\frac{\eta_{3}(\Phi_{M})}{\sqrt p}\,.
\]
By claim (iii) of Lemma~\ref{lem:Wedinsetal}, $\lVert \V\lVert\leq\sigma_{K}^{-1}(\hat\U^{\top}\O)\leq 2/(\sqrt 3\sigmaFamily)$. Furthermore, $\varphi_{a,c}(y_{1},y_{3})\eqdef\varphi_{a}(y_{1})\varphi_{c}(y_{3})$ is an orthonormal family of $\mathrm L^{2}(\Y^{2},{\L}^{\otimes 2})$ and
\[
\lVert\N\lVert_{F}=\Big(\sum_{a,c=1}^{M}\E\left[\varphi_{a}(Y_{1})\varphi_{c}(Y_{3})\right]^{2}\Big)^{1/2}=\Big(\sum_{a,c=1}^{M}\langle f^{\star}_{(Y_{1},Y_{3})},\varphi_{a,c}\rangle_{\mathrm L^{2}(\Y^{2},{\L}^{\otimes 2})}^{2}\Big)^{1/2}\leq\lVert f^{\star}_{(Y_{1},Y_{3})}\lVert_{2}\,.
\]
Then,
\begin{align*}
\lVert\V\N\V^{\top}-\hat\V\hatN\hat\V^{\top}\lVert&=\lVert\V\N\V^{\top}-(\V+E)(\N+F)(\V+E)^{\top}\lVert\,,\\
&=\lVert\V\N E^{\top}+\V F\V^{\top}+\V FE^{\top}+E\N\V^{\top}+E\N E^{\top}+EF\V^{\top}+EFE^{\top}\lVert\,,\\
&\leq 2\lVert E\lVert\lVert\V\lVert\lVert\N\lVert+2\lVert E\lVert\lVert\V\lVert\lVert F\lVert+\lVert E\lVert^{2}\lVert \N\lVert+\lVert \V\lVert^{2}\lVert F\lVert+\lVert E\lVert^{2}\lVert F\lVert\,,
\end{align*}
yields
\begin{align*}
\notag
\lVert\V\N\V^{\top}-\hat\V\hatN\hat\V^{\top}\lVert\leq&\frac{32\sqrt K\mathcal C_{M}(\Qstar,\F^{\star},\delta)\concentration\lVert f^{\star}_{(Y_{1},Y_{3})}\lVert_{2}}{3\sqrt 3\sigmaFamily^{3}}\bigg[1+\frac{\concentration}{\lVert f^{\star}_{(Y_{1},Y_{3})}\lVert_{2}}\frac{\eta_{3}(\Phi_{M})}{\sqrt{pM}}
\\
\notag
&+\frac{2\sqrt K\mathcal C_{M}(\Qstar,\F^{\star},\delta)\concentration}{\sqrt 3\sigmaFamily}\frac{\eta_{3}(\Phi_{M})}{\sqrt p}
\\
\notag
&
+\frac{\sqrt 3\sigmaFamily}{4\mathcal C_{M}(\Qstar,\F^{\star},\delta)\lVert f^{\star}_{(Y_{1},Y_{3})}\lVert_{2}\sqrt{K}}\frac{1}{\sqrt M}
\\
\notag
&
+\frac{2\sqrt K\mathcal C_{M}(\Qstar,\F^{\star},\delta)\concentration^{2}}{\sqrt 3\sigmaFamily\lVert f^{\star}_{(Y_{1},Y_{3})}\lVert_{2}}\frac{\eta^{2}_{3}(\Phi_{M})}{p\sqrt M}
\bigg]\frac{\eta_{3}(\Phi_{M})}{\sqrt p}
\end{align*}
As $p\geq\borneNfinalebis\geq4\borneNfinale=\frac{16K}{3\sigmaFamily^2}\mathcal C_{M}(\Qstar,\F^{\star},\delta)^{2}\,\concentration^{2} \eta_3(\Phi_{M})^{2}$,
\eq
\label{eq:ControlQD}
\lVert\V\N\V^{\top}-\hat\V\hatN\hat\V^{\top}\lVert\leq\tilde{\mathcal E}_{M}(\Qstar,\F^{\star},\delta)\concentration\frac{\eta_{3}(\Phi_{M})}{\sqrt p}\eqsp.
\qe
Observe that:
\begin{align*}
\lVert\Qstar-\tilde\Q\lVert&=\lVert(\D{\pistar}^{-1}-\D{\pihat}^{-1})\V\N\V^{\top}+\D{\pihat}^{-1}(\V\N\V^{\top}-\hat\V\hatN\hat\V^{\top})\lVert\\
&\leq\lVert\D{\pistar}^{-1}-\D{\pihat}^{-1}\lVert\lVert\V\lVert^{2}\lVert\N\lVert+\lVert\D{\pihat}^{-1}\lVert\lVert\V\N\V^{\top}-\hat\V\hatN\hat\V^{\top}\lVert\\
&\leq \frac{4\lVert f^{\star}_{(Y_{1},Y_{3})}\lVert_{2}}{3\sigmaFamily^{2}}\max_{x\in\X}({\pistar}_{x}^{-1}-\tilde\pi_{x}^{-1})+\max_{x\in\X}\pihat_{x}^{-1}\tilde{\mathcal E}_{M}(\Qstar,\F,\delta)\concentration\frac{\eta_{3}(\Phi_{M})}{\sqrt p}\\
&\leq \frac{8\lVert f^{\star}_{(Y_{1},Y_{3})}\lVert_{2}}{3\sigmaFamily^{2}{\pistar}_{\mathrm{min}}^{2}}\lVert\tilde\pi-\pistar\lVert_{2}+ \frac{2}{\pistar_{\mathrm{min}}}\tilde{\mathcal E}_{M}(\Qstar,\F^{\star},\delta)\concentration\frac{\eta_{3}(\Phi_{M})}{\sqrt p}\,,
\end{align*}
using \eqref{eq:bornehatpimin} and \eqref{eq:ControlQD}. 
\end{proof}
\noindent
Combining \eqref{eq:EstimationTildeL} and Lemma \ref{lem:EstimationQ} proves the second point of Theorem \ref{thm:ThmMain1}.

\subsection*{Last step: Final estimation of the stationary distribution} 
By {\bf [H\ref{assum:Qstar}']}, we know that the transition matrix $\Qstar$ is irreducible and aperiodic. Perron-Frobenius theorem shows that $\Qstar$ has a unique stationary distribution $\pistar$. More precisely, 
\begin{enumerate}[-]
\item $\mathbb R\,.\,\pistar=\ker(\mathrm{Id}_{K}-(\Qstar)^{\top})$ so that $(\mathbb R\,.\,\pistar)^{\bot}=\mathrm{range}(\mathrm{Id}_{K}-\Qstar)$,
\item and $\langle \pistar,\one_{K}\rangle=1$,
\end{enumerate}
where $\one_{K}=(1,\ldots,1)\in\R^{K}$. We deduce $\one_{K}\notin\mathrm{range}(\mathrm{Id}_{K}-\Qstar)$ and 
\eq
\label{eq:KPerron}
\mathrm{Rank}
\begin{pmatrix}
\mathrm{Id}_{K}-(\Qstar)^{\top}\\ \one_{K}^{\top}
\end{pmatrix}=K\,.
\qe
Set 
\[
A=\begin{pmatrix}
\mathrm{Id}_{K}-\Q^{\top}\\ \one_{K}^{\top}
\end{pmatrix}\quad\mathrm{and}\quad A^{\star}=\begin{pmatrix}
\mathrm{Id}_{K}-(\Qstar)^{\top}\\ \one_{K}^{\top}
\end{pmatrix}\,.
\]
We first derive an upper bound on $\lVert A^{+}-(A^{\star})^{+}\lVert$ where $A^{+}$ denotes the Moore-Penrose pseudo-inverse of $A$. Note that
\eq
\label{eq:interPIHAT}
A^{+}-(A^{\star})^{+}=(A^{\star})^{+}(A^{\star}-A)A^{+}-(A^{\star})^{+}(\mathrm{Id}_{K+1}-AA^{+})\,.
\qe
The last term can be written as 
\[
(A^{\star})^{+}(\mathrm{Id}_{K+1}-AA^{+})=(A^{\star})^{+}(A^{\star}(A^{\star})^{+})(\mathrm{Id}_{K+1}-AA^{+})=(A^{\star})^{+}P_{\mathrm{range}(A^{\star})}P_{\mathrm{range}(A)^{\bot}}\eqsp,
\]
where $P_{\mathrm{range}(A^{\star})}=A^{\star}(A^{\star})^{+}$ denotes the orthogonal projection onto $\mathrm{range}(A^{\star})$ and $P_{\mathrm{range}(A)^{\bot}}=\mathrm{Id}_{K+1}-AA^{+}$ denotes the orthogonal projection onto the orthogonal of $\mathrm{range}(A)$. Define 
\begin{equation}
\label{eq:def:s}
s(\Qstar)\eqdef\sigma_{K}(A^{\star})\eqsp.
\end{equation}
\begin{lemma}
If $\lVert \Q-\Qstar\lVert\leq s(\Qstar)/2$ then $\mathrm{Rank}(A)=\mathrm{Rank}(A^{\star})=K$ and
\[
\lVert P_{\mathrm{range}(A^{\star})}P_{\mathrm{range}(A)^{\bot}}\lVert\leq\frac{2\lVert \Q-\Qstar\lVert}{s(\Qstar)}\,.
\]
\end{lemma}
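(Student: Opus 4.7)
The plan is to separate the two claims, proving first that $\mathrm{Rank}(A)=K$ via a Weyl-type perturbation bound, and then deriving the projection inequality through a direct duality argument.

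For the rank statement, observe that
\[
A-A^{\star}=\begin{pmatrix} (\Qstar-\Q)^{\top}\\ 0\end{pmatrix},
\]
so $\lVert A-A^{\star}\rVert\leq \lVert \Q-\Qstar\rVert\leq s(\Qstar)/2$. By Weyl's inequality applied to $A$ and $A^{\star}$, one has $\sigma_{K}(A)\geq \sigma_{K}(A^{\star})-\lVert A-A^{\star}\rVert \geq s(\Qstar)/2>0$, which combined with \eqref{eq:KPerron} (already giving $\mathrm{Rank}(A^{\star})=K$) shows that both $(K+1)\times K$ matrices have full column rank.

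For the norm bound, I would exploit the adjointness identity
\[
\lVert P_{\mathrm{range}(A^{\star})}P_{\mathrm{range}(A)^{\bot}}\rVert=\lVert P_{\mathrm{range}(A)^{\bot}}P_{\mathrm{range}(A^{\star})}\rVert,
\]
and then bound the right-hand side directly. Fix a unit vector $v$ and write $w\eqdef P_{\mathrm{range}(A^{\star})}v\in\mathrm{range}(A^{\star})$. Since $A^{\star}$ has rank $K$, $w$ admits a unique preimage $x\in\R^{K}$ with $w=A^{\star}x$ and $\lVert x\rVert\leq \lVert w\rVert/\sigma_{K}(A^{\star})\leq 1/s(\Qstar)$. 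Using that $A x\in\mathrm{range}(A)$, which is annihilated by $P_{\mathrm{range}(A)^{\bot}}$, we obtain
\[
P_{\mathrm{range}(A)^{\bot}}w
=P_{\mathrm{range}(A)^{\bot}}A^{\star}x
=P_{\mathrm{range}(A)^{\bot}}(A^{\star}-A)x,
\]
so that
\[
\lVert P_{\mathrm{range}(A)^{\bot}}w\rVert\leq \lVert A^{\star}-A\rVert\,\lVert x\rVert\leq \frac{\lVert \Q-\Qstar\rVert}{s(\Qstar)}.
\]
Taking the supremum over unit $v$ yields the stated bound (the factor $2$ in the statement leaves some slack, e.g.\ it also accommodates arguing through $\sigma_{K}(A)\geq s(\Qstar)/2$ instead of $\sigma_{K}(A^{\star})=s(\Qstar)$).

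There is no real obstacle; the only subtle point is observing that $\lVert A-A^{\star}\rVert$ reduces to $\lVert \Q-\Qstar\rVert$ because the two matrices share the final row $\one_{K}^{\top}$, and keeping careful track of which singular value $s(\Qstar)$ refers to when trading between $\sigma_{K}(A)$ and $\sigma_{K}(A^{\star})$. This lemma will then feed into the pseudo-inverse perturbation formula \eqref{eq:interPIHAT} via the identity $(A^{\star})^{+}(\mathrm{Id}_{K+1}-AA^{+})=(A^{\star})^{+}P_{\mathrm{range}(A^{\star})}P_{\mathrm{range}(A)^{\bot}}$ to yield the final control on $\lVert \pistar-\mathbb P_{\tau}\pihat\rVert_{2}$.
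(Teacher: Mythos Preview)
Your proof is correct. The rank claim via Weyl's inequality matches the paper exactly. For the projection bound, however, you take a genuinely different and more elementary route: the paper invokes Wedin's $\sin\theta$ theorem, identifying $\lVert P_{\mathrm{range}(A^{\star})}P_{\mathrm{range}(A)^{\bot}}\rVert$ with $\lVert\sin\theta(\mathrm{range}(A),\mathrm{range}(A^{\star}))\rVert$ and then bounding the latter by $\lVert A-A^{\star}\rVert/\sigma_{K}(A)\leq 2\lVert A-A^{\star}\rVert/\sigma_{K}(A^{\star})$ after applying Weyl once more. Your direct argument---writing $w=A^{\star}x$ and exploiting $P_{\mathrm{range}(A)^{\bot}}Ax=0$---bypasses Wedin entirely and in fact yields the sharper constant $1$ in place of $2$, since you bound $\lVert x\rVert$ via $\sigma_{K}(A^{\star})$ rather than $\sigma_{K}(A)$. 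The paper's approach has the advantage of being a black-box citation, while yours is self-contained and tighter; either feeds equally well into \eqref{eq:interPIHAT}.
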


\begin{proof}
The first point follows from Weyl's inequality, see Theorem \ref{thm:Weyl}. By \cite{wedin1972perturbation}, 
\[
\lVert P_{\mathrm{range}(A^{\star})^{\bot}}P_{\mathrm{range}(A)}\lVert=\lVert P_{\mathrm{range}(A)^{\bot}}P_{\mathrm{range}(A^{\star})}\lVert\,.
\]
Moreover, since projections $P$ are orthogonal $(P_{\mathrm{range}(A)^{\bot}}P_{\mathrm{range}(A^{\star})})^{\top}=P_{\mathrm{range}(A^{\star})}P_{\mathrm{range}(A)^{\bot}}$. Using notation of \cite{wedin1972perturbation}, one may notice that $\lVert\sin\theta(\mathrm{range}(A),\mathrm{range}(A^{\star}))\lVert=\lVert P_{\mathrm{range}(A^{\star})^{\bot}}P_{\mathrm{range}(A)}\lVert$. By Wedin's theorem \cite{wedin1972perturbation}, if $\sigma_{K}(A)\geq s(\Qstar)/2
$ then $\lVert\sin\theta(\mathrm{range}(A),\mathrm{range}(A^{\star}))\lVert\leq\frac{2\lVert A-A^{\star}\lVert}{\sigma_{K}(A^{\star})}$.
We conclude using Weyl's inequality, see Theorem \ref{thm:Weyl}.
\end{proof}
\noindent
Triangular inequality in \eqref{eq:interPIHAT} gives
\begin{align*}
\lVert A^{+}-(A^{\star})^{+} \lVert &\leq \lVert (A^{\star})^{+}\lVert \lVert \Q-\Qstar\lVert\Big(\lVert A^{+}\lVert+\frac{2}{\sigma_{K}(A^{\star})}\Big)\,,\\
&\leq \frac{\lVert \Q-\Qstar\lVert}{\sigma_{K}(A^{\star})}\Big(\lVert A^{+}-(A^{\star})^{+} \lVert+\frac{3}{\sigma_{K}(A^{\star})}\Big)\,,
\end{align*}
using that $\lVert (A^{\star})^{+}\lVert=1/\sigma_{K}(A^{\star})$. Deduce that if $\lVert \Q-\Qstar\lVert\leq \sigma_{K}(A^{\star})/2$ then $\lVert A^{+}-(A^{\star})^{+} \lVert\leq{6\lVert \Q-\Qstar\lVert}/{\sigma^{2}_{K}(A^{\star})}$. From Weyl's inequality, if $\lVert \Q-\Qstar\lVert\leq \sigma_{K}(A^{\star})/2$ then $\sigma_{K}(A)\geq\sigma_{K}(A^{\star})/2$.  $\mathrm{Id}_{K}-\Q^{\top}$ has rank $K-1$ and the eigenspace $\ker(\mathrm{Id}_{K}-\Q^{\top})$ has dimension $1$.
Thus, $\Q$ is an irreducible and aperiodic transition matrix, and $\pi$ is the unique solution to
\[
\begin{pmatrix}
\mathrm{Id}_{K}-\Q^{\top}\\ \one_{K}^{\top}
\end{pmatrix}
\pi=
\begin{pmatrix}
0\\ 1
\end{pmatrix}\,.
\]
Now $\lVert \pi-\pistar\lVert_{2}\leq\lVert A^{+}-(A^{\star})^{+}\lVert$ and the last part of Theorem \ref{thm:ThmMain1} is proved.

\section{Matrix perturbation}
We gather in this section some useful results in matrix perturbation theory. Proofs of the following theorem may be found in \cite{stewart1990matrix} for instance.
\begin{theorem}[Weyl's inequality]
\label{thm:Weyl}
Let $A,B$ be $(p\times q)$ matrices with $p\geq q$ then, for all $ i=1,\ldots,q$,
\[
\lvert\sigma_{i}(A+B)-\sigma_{i}(A)\lvert\leq\sigma_{1}(B)\,.
\]
\end{theorem}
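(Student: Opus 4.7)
The plan is to prove Weyl's inequality via the Courant--Fischer--Weyl min--max characterisation of singular values. Recall that for any $p\times q$ matrix $M$ with $p\geq q$, the $i$-th singular value admits the variational representation
\[
\sigma_{i}(M) \;=\; \min_{\substack{V\subset\mathbb{R}^{q} \\ \dim V = q-i+1}}\;\max_{\substack{x\in V \\ \|x\|_{2}=1}} \|Mx\|_{2}\,,
\]
and in particular $\sigma_{1}(B) = \max_{\|x\|_{2}=1}\|Bx\|_{2}$ is the operator norm of $B$. This variational formula is the only non-trivial ingredient; once it is in hand the rest is two applications of the triangle inequality.

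First, for any unit vector $x\in\mathbb{R}^{q}$, the triangle inequality together with $\|Bx\|_{2}\le\sigma_{1}(B)$ gives
\[
\|(A+B)x\|_{2}\;\le\;\|Ax\|_{2}+\|Bx\|_{2}\;\le\;\|Ax\|_{2}+\sigma_{1}(B)\,.
\]
Fix an index $i\in\{1,\dots,q\}$ and any subspace $V$ of dimension $q-i+1$. Taking the maximum over unit vectors $x\in V$ of the above inequality yields
\[
\max_{\substack{x\in V \\ \|x\|_{2}=1}} \|(A+B)x\|_{2}\;\le\;\max_{\substack{x\in V \\ \|x\|_{2}=1}} \|Ax\|_{2}+\sigma_{1}(B)\,.
\]
Now minimising both sides over all such subspaces $V$, the left-hand side becomes $\sigma_{i}(A+B)$ and the right-hand side becomes $\sigma_{i}(A)+\sigma_{1}(B)$, which proves
\[
\sigma_{i}(A+B)-\sigma_{i}(A)\;\le\;\sigma_{1}(B)\,.
\]

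Second, by the symmetry $A=(A+B)+(-B)$ with $\sigma_{1}(-B)=\sigma_{1}(B)$, the exact same argument applied to the pair $(A+B,-B)$ gives the reverse bound
\[
\sigma_{i}(A)-\sigma_{i}(A+B)\;\le\;\sigma_{1}(B)\,.
\]
Combining the two inequalities produces $|\sigma_{i}(A+B)-\sigma_{i}(A)|\le\sigma_{1}(B)$, which is the desired Weyl inequality. There is no real obstacle here: the result is classical and the only delicate point is to be careful that the min--max characterisation is stated in the correct form for rectangular matrices (with subspaces living in $\mathbb{R}^{q}$, the smaller dimension), and that one does not confuse singular values with eigenvalues---the same proof strategy works for Hermitian eigenvalues via the Courant--Fischer theorem, and transfers to singular values through the Jordan--Wielandt Hermitian dilation if one prefers to reduce to that case rather than working directly with $\|Mx\|_{2}$.
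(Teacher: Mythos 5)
Your proof is correct. The paper itself gives no proof of this statement---it simply cites Stewart and Sun's \emph{Matrix Perturbation Theory}---and your argument via the Courant--Fischer min--max characterisation of singular values, applied once to the pair $(A,B)$ and once to $(A+B,-B)$, is exactly the standard proof found in that reference; the variational formula is stated in the correct rectangular form and both directions of the inequality are handled properly.
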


\begin{theorem}
\label{thm:InversePerturbation}
Let $A,B$ be $(p\times p)$ matrices. If $A$ is invertible and $\lVert A^{-1}B\lVert<1$ then $\tilde A\eqdef A+B$ is invertible and
\[
\lVert \tilde A^{-1}-A^{-1}\lVert\leq\frac{\lVert B\lVert\lVert A^{-1}\lVert^{2}}{1-\lVert A^{-1}B\lVert}\,.
\]
\end{theorem}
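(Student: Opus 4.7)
The plan is to reduce the claim to the classical Neumann series (geometric series) argument for matrix inverses. Writing $\tilde A = A + B = A(I + A^{-1}B)$, the whole question becomes: when is $I + A^{-1}B$ invertible, and how big is its inverse? The hypothesis $\lVert A^{-1}B\rVert < 1$ is exactly what one needs for the Neumann series $\sum_{k\ge 0}(-A^{-1}B)^{k}$ to converge in operator norm, yielding both the invertibility of $I + A^{-1}B$ and the bound $\lVert (I+A^{-1}B)^{-1}\rVert \le 1/(1-\lVert A^{-1}B\rVert)$.

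From there I would proceed in three quick steps. First, deduce that $\tilde A$ is invertible with $\tilde A^{-1} = (I+A^{-1}B)^{-1}A^{-1}$. Second, use the identity $(I+A^{-1}B)^{-1} - I = -(I+A^{-1}B)^{-1}A^{-1}B$ (which is immediate from multiplying both sides by $I + A^{-1}B$) to rewrite
\[
\tilde A^{-1} - A^{-1} = \bigl[(I+A^{-1}B)^{-1} - I\bigr]A^{-1} = -(I+A^{-1}B)^{-1}A^{-1}B\,A^{-1}.
\]
Third, take operator norms and apply submultiplicativity to obtain
\[
\lVert \tilde A^{-1} - A^{-1}\rVert \le \lVert (I+A^{-1}B)^{-1}\rVert\,\lVert A^{-1}\rVert^{2}\,\lVert B\rVert \le \frac{\lVert B\rVert\,\lVert A^{-1}\rVert^{2}}{1-\lVert A^{-1}B\rVert}.
\]

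There is no real obstacle here: this is a textbook statement and every step is routine. The only point that deserves a sentence of justification is the Neumann series bound, which follows because the partial sums $\sum_{k=0}^{n}(-A^{-1}B)^{k}$ form a Cauchy sequence in operator norm under $\lVert A^{-1}B\rVert<1$, and telescoping $(I+A^{-1}B)\sum_{k=0}^{n}(-A^{-1}B)^{k} = I - (-A^{-1}B)^{n+1}$ identifies the limit as the two-sided inverse of $I+A^{-1}B$.
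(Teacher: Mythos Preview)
Your proof is correct and is the standard Neumann-series argument; the paper itself does not give a proof of this theorem but simply refers the reader to \cite{stewart1990matrix}, where the same argument appears. There is nothing to add.
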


\begin{theorem}[Bauer-Fike]
\label{thm:BauerFike}
Let $A,B$ be $(p\times p)$ matrices and $\tilde A\eqdef A+B$. Assume that $A$ is diagonalizable, i.e. $X^{-1}AX=\Lambda$, where $\Lambda=\D{(\lambda_{1},\ldots,\lambda_{p})}$. Then,
\eq
\label{eq:SpectralVariationBF}
\sv_{A}(\tilde A)\leq\kappa(X)\lVert B\lVert\,,
\qe
where $\displaystyle\sv_{A}(\tilde A)\eqdef\max_{j}\min_{i}\lvert\tilde\lambda_j-\lambda_{i}\lvert$ and $\tilde\lambda_j$ denotes the eigenvalues of $\tilde A$.
\end{theorem}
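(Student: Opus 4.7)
The plan is to follow the standard textbook argument: pick an arbitrary eigenvalue $\tilde\lambda_j$ of $\tilde A$, assume it is not already one of the $\lambda_i$ (otherwise the bound $\min_i|\tilde\lambda_j-\lambda_i|=0$ is trivial), and then turn the identity $\tilde A v = \tilde\lambda_j v$ for an associated eigenvector $v\neq 0$ into a resolvent inequality.

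First I would write $\tilde A v = (A+B)v = \tilde\lambda_j v$, which rearranges as $(\tilde\lambda_j I - A)v = B v$. Since $\tilde\lambda_j\notin\{\lambda_1,\ldots,\lambda_p\}$, the matrix $\tilde\lambda_j I - A$ is invertible, and diagonalizability gives
\[
(\tilde\lambda_j I - A)^{-1} = X(\tilde\lambda_j I - \Lambda)^{-1} X^{-1},
\]
so that $v = X(\tilde\lambda_j I - \Lambda)^{-1} X^{-1} B v$. Taking operator norms and using submultiplicativity yields
\[
\lVert v\lVert \le \lVert X\lVert\,\lVert (\tilde\lambda_j I-\Lambda)^{-1}\lVert\,\lVert X^{-1}\lVert\,\lVert B\lVert\,\lVert v\lVert.
\]

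Next I would use the fact that $(\tilde\lambda_j I-\Lambda)^{-1}$ is diagonal, so its operator norm equals $\max_i |\tilde\lambda_j-\lambda_i|^{-1} = \bigl(\min_i |\tilde\lambda_j-\lambda_i|\bigr)^{-1}$. Dividing through by $\lVert v\lVert>0$ and using $\kappa(X)=\lVert X\lVert\,\lVert X^{-1}\lVert$ gives
\[
\min_i |\tilde\lambda_j-\lambda_i| \le \kappa(X)\,\lVert B\lVert.
\]
Taking the maximum over $j$ on the left-hand side yields exactly $\sv_{A}(\tilde A)\le \kappa(X)\lVert B\lVert$, which is \eqref{eq:SpectralVariationBF}.

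There is no real obstacle here: the only subtle point is keeping track of the two cases (whether $\tilde\lambda_j$ already belongs to the spectrum of $A$ or not) so that the inversion of $\tilde\lambda_j I-A$ is justified. Everything else is a one-line norm estimate that exploits the diagonal form of $\Lambda$ and the definition of the condition number.
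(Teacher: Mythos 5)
Your proof is correct and complete: the paper itself does not prove this theorem but simply cites \cite{stewart1990matrix}, and your resolvent argument $(\tilde\lambda_j I-A)v=Bv$, $v=X(\tilde\lambda_j I-\Lambda)^{-1}X^{-1}Bv$, followed by the norm estimate and the observation that the operator norm of the inverse of a diagonal matrix is the reciprocal of the smallest $|\tilde\lambda_j-\lambda_i|$, is exactly the classical proof given in that reference. You also correctly dispose of the degenerate case where $\tilde\lambda_j$ already lies in the spectrum of $A$, so nothing is missing.
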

\begin{remark}
Moreover, if the disks $\mathcal D_{i}\eqdef\{\xi\ :\ \lvert\xi-\lambda_{i}\lvert\leq\kappa(X)\lVert B\lVert\}$ are isolated from the others, then \eqref{eq:SpectralVariationBF} holds with the matching distance $\md(A,\tilde A)\leq\kappa(X)\lVert B\lVert$ where $\displaystyle\md(A,\tilde A)\eqdef\min_{\tau\in\mathcal S_{p}}\max_{i}\lvert\hat\lambda_{\tau(i)}-\lambda_{i}\lvert$. Eventually, if $\Lambda$, $\tilde A$ are real valued matrices then $\tilde A$ has $p$ distinct real eigenvalues.
\end{remark}

\section{Concentration inequalities}
\label{sec:AppendixConcentration}
Consider consecutive observations of the same hidden Markov chain $Z_{s}\eqdef(Y_{s},Y_{s+1},Y_{s+2})$ for $1\leq s\leq p$, 
\begin{lemma}
For any positive $u$, any $M$ and any $p$:
\begin{align*}
\label{eq:ConcentrationLcasDependent}
\P\Big[\lVert\hatl-\l\lVert_{F}&\geq\frac{\sqrt 2\eta_{1}(\Phi_{M})}{\sqrt{p\psg}}(1+2u\sqrt{1+\log(8/{\pistar}_{\mathrm{min}})})\Big]\leq\exp(-u^{2})\,,\\
\P\Big[\lVert\hatM-\M\lVert_{F}&\geq\frac{\sqrt 2\eta_{3}(\Phi_{M})}{\sqrt{p\psg}}(1+2u\sqrt{1+\log(8/{\pistar}_{\mathrm{min}})})\Big]\leq\exp(-u^{2})\,,\\
\P\Big[\lVert\hatN-\N\lVert_{F}&\geq\frac{\sqrt 2\eta_{2}(\Phi_{M})}{\sqrt{p\psg}}(1+2u\sqrt{1+\log(8/{\pistar}_{\mathrm{min}})})\Big]\leq\exp(-u^{2})\,,\\
\P\Big[\lVert\hatPj-\Pj\lVert_{F}&\geq\frac{\sqrt 2\eta_{2}(\Phi_{M})}{\sqrt{p\psg}}(1+2u\sqrt{1+\log(8/{\pistar}_{\mathrm{min}})})\Big]\leq\exp(-u^{2})\,.
\end{align*}

\end{lemma}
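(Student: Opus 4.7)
The four inequalities share the same structure, so I focus on the most complex one, $\lVert\hatM-\M\lVert_F$; the others follow by the same argument with $\eta_{3}$ replaced by $\eta_{1}$ or $\eta_{2}$ and with fewer sliding-window terms affected per coordinate change. The random variable $F \eqdef \lVert\hatM-\M\lVert_F$ is a function of the stationary HMM trajectory $(Y_1,\ldots,Y_{p+2})$ whose hidden chain is irreducible and aperiodic under \textbf{[H\ref{assum:Qstarprime}']}, and therefore enjoys a positive pseudo-spectral gap $\psg$. The strategy is a bounded-differences concentration inequality for stationary Markov chains in the spirit of Paulin (2015), whose exponent involves $\psg$ and $\mixtime$, coupled with a variance bound to control the expectation.

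\textbf{Bounded differences.} Replacing one coordinate $Y_s$ alters at most three summands in $\hatM(a,b,c) = p^{-1}\sum_{t}\varphi_a(Y_{t})\varphi_b(Y_{t+1})\varphi_c(Y_{t+2})$, namely those indexed by $t\in\{s-2,s-1,s\}\cap\{1,\ldots,p\}$. By the very definition \eqref{eq:def:eta3} of $\eta_3(\Phi_M)$, which supremizes precisely the Frobenius distance between two rank-one triple tensors $(\varphi_a(y_1)\varphi_b(y_2)\varphi_c(y_3))_{a,b,c}$ evaluated at two points of $\Y^{3}$, each modified summand contributes at most $\eta_3(\Phi_M)/p$ to the Frobenius change of $\hatM$. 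Hence $F$ satisfies bounded-differences constants $c_s \leq 3\eta_3(\Phi_M)/p$ with $\sum_{s=1}^{p+2} c_s^2 \leq 9(p+2)\eta_3(\Phi_M)^2/p^2$.

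\textbf{Expectation and concentration.} Jensen's inequality gives $\mathbb{E}[F] \leq \big(\sum_{a,b,c}\mathrm{Var}(\hatM(a,b,c))\big)^{1/2}$. The Kipnis--Varadhan style variance bound for the empirical mean along a stationary Markov chain with pseudo-spectral gap $\psg$ reads $\mathrm{Var}(p^{-1}\sum_s h(Z_s)) \leq (2/(p\psg))\,\mathbb{E}[h(Z_1)^2]$ for $Z_s \eqdef (Y_s,Y_{s+1},Y_{s+2})$; summing over $(a,b,c)$ and invoking the pointwise estimate $\sum_{a,b,c}\varphi_a(y_1)^2\varphi_b(y_2)^2\varphi_c(y_3)^2 \leq \eta_3(\Phi_M)^2$ (extracted from \eqref{eq:def:eta3} by choosing $y'$ annihilating the basis values) yields $\mathbb{E}[F] \leq \sqrt{2}\,\eta_3(\Phi_M)/\sqrt{p\psg}$. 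Then Paulin's bounded-differences concentration inequality for stationary Markov chains, which delivers a Gaussian-tail bound of the form $\P(F - \mathbb{E}[F] \geq t) \leq \exp\!\big(-c\,t^2/(\sum_s c_s^2\,\mixtime)\big)$, combined with the $c_s$ above and the definition $\mixtime = (1+3\log 2 -\log\pistarmin)/\psg = (1+\log(8/\pistarmin))/\psg$, produces, after solving for $t$, a deviation of order $u\,\eta_3(\Phi_M)\sqrt{1+\log(8/\pistarmin)}/\sqrt{p\psg}$ with probability at least $1-e^{-u^2}$. Adding this to the expectation bound gives the announced inequality for $\hatM$; the cases of $\hatl$, $\hatN$ and $\hatPj$ are strictly analogous, only with one or two sliding-window indices (and hence $\eta_1$ or $\eta_2$) instead of three.

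\textbf{Main obstacle.} The strategy itself is classical, but tracking \emph{exactly} the announced constants --- the prefactor $\sqrt{2}$ and the precise factor $\sqrt{1+\log(8/\pistarmin)}$ --- requires invoking the pseudo-spectral-gap inequalities of Paulin (2015) rather than cruder Doeblin-type couplings, propagating the pseudo-spectral gap from the hidden chain $(X_n)$ to the observation path $(Y_n)$ (immediate since $Y_n$ is $X_n$-measurable conditionally), and aligning the variance, mixing-time, and bounded-differences constants so that the resulting expression matches the definition \eqref{eq:cstar} of $\concentration$ on which the rest of the paper relies.
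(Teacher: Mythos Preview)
Your overall architecture --- bounded differences plus Paulin's Markov-chain McDiarmid for the deviation, and a pseudo-spectral-gap variance estimate for the expectation --- is exactly the paper's strategy. Two points, however, separate your sketch from a complete proof of the stated inequality.

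\textbf{Parametrisation of the bounded differences.} You regard $F$ as a function of the individual observations $(Y_1,\ldots,Y_{p+2})$; changing one $Y_s$ perturbs up to three overlapping summands and forces $c_s\le 3\eta_3(\Phi_M)/p$. The paper instead writes $\zeta_{\M}$ as a function of the \emph{triples} $Z_s=(Y_s,Y_{s+1},Y_{s+2})$, $1\le s\le p$; now changing one $z_s\in\Y^3$ perturbs exactly one summand, so $c_s\le \eta_3(\Phi_M)/p$. The McDiarmid inequality is then applied to the hidden-Markov structure $(X_s,X_{s+1},X_{s+2},Z_s)_s$, whose pseudo-spectral gap the paper checks to equal $\psg$. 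This is what produces the precise factor $2u\sqrt{1+\log(8/\pistarmin)}$; with your parametrisation you would be off by a factor of~$3$ and could not recover the announced constant.

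\textbf{The expectation bound.} Your step ``choose $y'$ annihilating the basis values'' to deduce $\sum_{a,b,c}\varphi_a(y_1)^2\varphi_b(y_2)^2\varphi_c(y_3)^2\le\eta_3(\Phi_M)^2$ is not valid in general: for the trigonometric basis, say, the constant function never vanishes, and there is no $y'$ with $\varphi_m(y')=0$ for all $m$. The paper avoids this by a symmetrisation trick: after the Kipnis--Varadhan-type bound one has $\sum_{a,b,c}\Var(\varphi_a(Y_1)\varphi_b(Y_2)\varphi_c(Y_3))$, and then uses $2\Var(U)\le\E(U-U')^2$ with an independent copy $(Y'_1,Y'_2,Y'_3)$ to obtain
\[
\E\Big[\sum_{a,b,c}\big(\varphi_a(Y_1)\varphi_b(Y_2)\varphi_c(Y_3)-\varphi_a(Y'_1)\varphi_b(Y'_2)\varphi_c(Y'_3)\big)^2\Big]\le \eta_3(\Phi_M)^2,
\]
which matches the definition of $\eta_3$ directly and yields $\E\lVert\hatM-\M\rVert_F\le\sqrt{2}\,\eta_3(\Phi_M)/\sqrt{p\psg}$ without any pointwise annihilation hypothesis.
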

\begin{proof}
Set $\zeta_{\l}(Z_{1},\ldots,Z_{p})\eqdef\lVert\hatl(Z_{1},\ldots,Z_{p})-\l\lVert_{2}$, $\zeta_{\M}(Z_{1},\ldots,Z_{p})\eqdef\lVert\hatM(Z_{1},\ldots,Z_{p})-\M\lVert_{F}$, $\zeta_{\N}(Z_{1},\ldots,Z_{p})\eqdef\lVert\hatN(Z_{1},\ldots,Z_{p})-\N\lVert_{F}$ and $\zeta_{\Pj}(Z_{1},\ldots,Z_{p})\eqdef\lVert\hatPj(Z_{1},\ldots,Z_{p})-\Pj\lVert_{F}$ where, for instance $\hatl(Z_{1},\ldots,Z_{p})$ denotes the dependence of $\hatl$ in $Z_{1},\ldots,Z_{p}$. We begin with $\zeta_{\M}$, other cases are similar. Form the difference with respect to the coordinate $i$:
\[
c_{i}\eqdef\sup_{z_{j}\in\Y^{3},z'_{i}\in\Y^{3}}\left\lvert\zeta_{\M}(z_{1},\ldots,z_{i-1},z_{i},z_{i+1},\ldots,z_{p})-\zeta_{\M}(z_{1},\ldots,z_{i-1},z'_{i},z_{i+1},\ldots,z_{p})\right\lvert\,.
\]
By the triangular inequality,
\[
c_{i}\leq\sup_{z_{j}\in\Y^{3},z'_{i}\in\Y^{3}}\left\lVert\hatM(z_{1},\ldots,z_{i-1},z_{i},z_{i+1},\ldots,z_{p})-\hatM(z_{1},\ldots,z_{i-1},z'_{i},z_{i+1},\ldots,z_{p})\right\lVert_{F}\eqsp,
\]
so that
\[
c_{i}\leq\frac1p\sup_{z_{i}\in\Y^{3},z'_{i}\in\Y^{3}}
\left(\sum_{a,b,c}\left(\varphi_{a}(y_{1}^{(i)})\varphi_{b}(y_{2}^{(i)})\varphi_{c}(y_{3}^{(i)})-\varphi_{a}({y'}_{1}^{(i)})\varphi_{b}({y'}_{2}^{(i)})\varphi_{c}({y'}_{3}^{(i)})\right)^{2}
\right)^{1/2}\eqsp.
\]
Eventually, we get that $c_{i}\leq{\eta_{3}(\Phi_{M})}/p$.  By McDiarmid's inequality \cite{paulin2012concentration}, for all $u>0$,
\[
\P(\lVert\hatM-\M\lVert_{F}\geq\E\left[\lVert\hatM-\M\lVert_{F}\right] + u)\leq\exp\left(-\frac{pu^{2}}{8\mixtime\eta^{2}_{3}(\Phi_{M})}\right)\,.
\]
We need the following lemma that can be deduced from \cite{paulin2012concentration}.
\begin{lemma}
\label{lem:VarianceMarkov}
For any $a,b,c\in\{1,\ldots,M\}$,
\begin{multline*}
\E\left[\sum_{s=1}^{p}\frac1p[\varphi_{a}(Y_{s})\varphi_{b}(Y_{s+1})\varphi_{c}(Y_{s+2})-\E\left[\varphi_{a}(Y_{1})\varphi_{b}(Y_{2})\varphi_{c}(Y_{3})\right]]\right]^{2}\\ \leq\frac{4}{p\psg}\E\left[\varphi_{a}(Y_{1})\varphi_{b}(Y_{2})\varphi_{c}(Y_{3})-\E\left[\varphi_{a}(Y_{1})\varphi_{b}(Y_{2})\varphi_{c}(Y_{3})\right]\right]^{2}\,.
\end{multline*}
\end{lemma}
\begin{proof}
Notice that $(X_{1},Y_{1}),(X_{2},Y_{2}),\ldots$ is homogenous, irreducible, aperiodic and stationary Markov chain on $\X\times\Y$, whose stationary distribution is $\tilde\pi(x,\d y)\eqdef\pi_{x}\mu_{x}(\d y)$. %\,.%=\pi_{x}f_{x}(y)\L(\d y)\,.
Observe that its transition kernel $\tilde\Q$ satisfies, for all $x,x'\in\X$ and all $y,y'\in\Y$,
\[
\tilde\Q(x,y;x',\d y')=\Qstar(x,x')\mu_{x'}(\d y')\,.
\]
The transition kernel $\tilde\Q$ can be viewed as an operator $\mathbb Q$ on the Hilbert space $\mathrm L^{2}(\tilde\pi)$ defined, for all $f\in\mathrm L^{2}(\tilde\pi)$, by:
\[
(\mathbb Q f)(x,y)\eqdef\E_{\tilde\Q(x,y;\ldotp,\ldotp)}(f)=\sum_{x'\in\X}\Qstar(x,x')\int_{\Y}f(x',y')\mu_{x'}(\d y')\,.
\]
Note that $\mathbb Q f(x,y)$ does not depend on $y$. Set $E\eqdef\{f(x,y)\in\mathrm L^{2}(\tilde\pi)\ :\ \text{$f$ does not depend on $y$}\}$. The $\mathrm L^{2}(\tilde\pi)$-self-adjoint operator defined, for all $f\in\mathrm L^{2}(\tilde\pi)$, by
\[
(\Pi_{E}f)(x,y)\eqdef\int_{\Y}f(x,y')\mu_{x}(\d y')\,,
\]
is the orthogonal projection onto $E$. Since $\Pi_{E}\mathbb Q\Pi_{E}=\mathbb Q$, the set of nonzero eigenvalues of $\mathbb Q$ is exactly the set of nonzero eigenvalues of the $K$ dimensional linear operator $\Pi_{E}\mathbb Q\Pi_{E}$. Eventually, note that the matrix of $\mathbb Q$ in the basis $((x,y)\mapsto\one_{x'=x})_{x'\in\X}$ is $\Qstar$. Then, the pseudo spectral gap of $\mathbb Q$ is equal to $\psg$ (the pseudo spectral gap of $\Qstar$). 

Furthermore, note the same analysis can be made for $(X_{1}, X_{2}, X_{3}, Z_{1}),(X_{2},X_{3},X_{4},Z_{2}),\ldots$ and its pseudo spectral gap is the pseudo spectral gap of the Markov chain $(X_{1}, X_{2}, X_{3}),(X_{2},X_{3},X_{4}),\ldots$ which is $\psg$. Indeed, the set of nonzero eigenvalues of the Markov chain $(X_{1}, X_{2}, X_{3}),(X_{2},X_{3},X_{4}),\ldots$ is equal to the set of nonzero eigenvalues of the Markov chain $X_{1}, X_{2},\ldots$. 

Eventually, set $g(X_{s},X_{s+1},X_{s+2},Z_{s})\eqdef(1/p)\varphi_{a}(Y_{s})\varphi_{b}(Y_{s+1})\varphi_{c}(Y_{s+2})$ and apply Theorem 3.7 in \cite{paulin2012concentration} to conclude the proof.
\end{proof}
\noindent
Then,
\begin{align*}
\E\left[\lVert\hatM-\M\lVert_{F}\right]&\leq \E\left[\lVert\hatM-\M\lVert_{F}^{2}\right]^{1/2}\,,\\
&\leq \E\left[\sum_{a,b,c}\left(\frac1p\sum_{s=1}^{p}\varphi_{a}(Y_s)\varphi_{b}(Y_{s+1})\varphi_{c}(Y_{s+2})-\E\left[\varphi_{a}(Y_{1})\varphi_{b}(Y_{2})\varphi_{c}(Y_{3})\right]\right)^{2}\right]^{1/2}\,,\\
&\leq\left[\sum_{a,b,c}\E\left(\sum_{s=1}^{p}\frac1p\left\{\varphi_{a}(Y_{s})\varphi_{b}(Y_{s+1})\varphi_{c}(Y_{s+2})-\E\left[\varphi_{a}(Y_{1})\varphi_{b}(Y_{2})\varphi_{c}(Y_{3})\right]\right\}\right)^{2}\right]^{1/2}\,,\\
&\leq\frac2{\sqrt {p\psg}}\left[\sum_{a,b,c}\E\left[\varphi_{a}(Y_{1})\varphi_{b}(Y_{2})\varphi_{c}(Y_{3})-\E\varphi_{a}(Y_{1})\varphi_{b}(Y_{2})\varphi_{c}(Y_{3})\right]^{2}\right]^{1/2}\,,\\
&\leq\left({\frac2{p\psg}}\right)^{1/2}\Big[\E\sum_{a,b,c}(\varphi_{a}(Y_{1})\varphi_{b}(Y_{2})\varphi_{c}(Y_{3})-\varphi_{a}({Y'}_{1})\varphi_{b}({Y'}_{2})\varphi_{c}({Y'}_{3}))^{2}\Big]^{1/2}\,,\\
&\leq\left({\frac{2\eta^{2}_{3}(\Phi_{M})}{p\psg}}\right)^{1/2}\,,
\end{align*}
using Jensen's inequality, Lemma \ref{lem:VarianceMarkov} and then $2\E(U-\E U)^{2}\leq\E(U-U')^{2}$ where $U$ is any real valued random variable with finite second moment and $U'$ an independent copy of $U$. The proof is similar for $\l$, $\N$ and $\Pj$.
\end{proof}

\section{Proof of Theorem \ref{thm:HSK}}
\label{sec:HSK}

\subsection*{Preliminaries lemmas}
\begin{lemma}
\label{lem:ExpressionMetP}
For all $b\in\{1,\ldots,M\}$,
\[
\M(\ldotp,b,\ldotp)=\O\D{\pistar}\Qstar\D{\O(b,\ldotp)}\Qstar\O^{\top}\,.
\]
Similarly,  $\Pj=\O\D{\pistar}{\Qstar}^{2}\O^{\top}$.
\end{lemma}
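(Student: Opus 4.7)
The plan is a direct computation using the HMM conditional independence structure, followed by recognition of the resulting sum as a matrix product written entrywise.

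First I would expand the definition
\[
\M(a,b,c)=\E[\varphi_{a}(Y_{1})\varphi_{b}(Y_{2})\varphi_{c}(Y_{3})]
\]
by conditioning on $(X_{1},X_{2},X_{3})$. Since, under the HMM assumption, $Y_{1},Y_{2},Y_{3}$ are independent conditionally on $(X_{j})_{j\geq 1}$ and $Y_{\ell}$ depends only on $X_{\ell}$, each conditional expectation $\E[\varphi_{m}(Y_{\ell})\mid X_{\ell}=x]$ equals $\O(m,x)$ by definition. Combined with {\bf [H\ref{assum:stationary}]} and the Markov property of $(X_{j})_{j\geq 1}$, the joint law of $(X_{1},X_{2},X_{3})$ factorizes as $\pistar(x_{1})\Qstar(x_{1},x_{2})\Qstar(x_{2},x_{3})$, giving
\[
\M(a,b,c)=\sum_{x_{1},x_{2},x_{3}\in\X}\O(a,x_{1})\,\pistar(x_{1})\,\Qstar(x_{1},x_{2})\,\O(b,x_{2})\,\Qstar(x_{2},x_{3})\,\O(c,x_{3}).
\]

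Next I would recognize the right-hand side as the $(a,c)$ entry of a matrix product. Reading the sum from left to right, the factor $\O(b,x_{2})$ plays the role of a diagonal insertion at the $x_{2}$ coordinate, so the chain of summations is precisely
\[
\bigl(\O\,\D{\pistar}\,\Qstar\,\D{\O(b,\ldotp)}\,\Qstar\,\O^{\top}\bigr)(a,c),
\]
which proves the first identity. For the second identity I would carry out the same argument but condition only on $(X_{1},X_{3})$, whose joint law under stationarity equals $\pistar(x_{1})(\Qstar)^{2}(x_{1},x_{3})$; the resulting entrywise formula is exactly $\O\,\D{\pistar}\,(\Qstar)^{2}\,\O^{\top}$.

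There is no real obstacle: the statement is a bookkeeping lemma translating the HMM forward recursion into matrix form. The only thing to be careful about is orienting the indices correctly (rows of $\O$ are indexed by basis functions $m$, columns by hidden states $x$), so that $\D{\O(b,\ldotp)}$ is a $K\times K$ diagonal matrix whose $x$th diagonal entry is $\O(b,x)$, and so that the outer factors $\O$ on the left and $\O^{\top}$ on the right contract with the $x_{1}$ and $x_{3}$ indices respectively.
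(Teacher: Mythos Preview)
Your proof is correct and follows essentially the same approach as the paper: conditioning on the hidden states, using the HMM conditional independence to replace each $\E[\varphi_m(Y_\ell)\mid X_\ell=x]$ by $\O(m,x)$, and then identifying the resulting triple sum as the $(a,c)$ entry of the claimed matrix product. The only cosmetic difference is that for $\Pj$ the paper keeps the intermediate sum over $x_2$ explicit (yielding $\sum_{x_2}\Qstar(x_1,x_2)\Qstar(x_2,x_3)$), whereas you absorb it directly into $(\Qstar)^2(x_1,x_3)$; both are equivalent.
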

\begin{proof}
Let $a,c\in\{1,\ldots,M\}^{2}$ and observe that:
\begin{align*}
(\O\D{\pistar}\Qstar&\D{\O(b,\ldotp)}\Qstar\O^{\top})({a,c})\\
&=\sum_{(x_1,x_2,x_3)\in\X^{3}}\O(a,x_1)\pi(x_1)\Qstar(x_1,x_2)\O(b,x_2)\Qstar(x_2,x_3)\O(c,x_3)\,,\\
&=\sum_{(x_1,x_2,x_3)\in\X^{3}}\E\left[\varphi_{a}(Y_{1})|X_{1}=x_1\right]\P(X_{1}=x_1)\P(X_{2}=x_2|X_{1}=x_1)\\
&\hspace{2.5cm}\times\E\left[\varphi_{b}(Y_{2})|X_{2}=x_2\right]\P(X_{3}=x_3|X_{2}=x_2)\E\left[\varphi_{c}(Y_{3})|X_{3}=x_3\right]\,,\\
&=\E\left[\varphi_{a}(Y_{1})\varphi_{b}(Y_{2})\varphi_{c}(Y_{3})\right]\eqsp.
\end{align*}
Similarly, 
\begin{align*}
(\O\D{\pistar}{\Qstar}^{2}\O^{\top})({a,c})&=\sum_{(x_1,x_2,x_3)\in\X^{3}}\O(a,x_1)\pi(x_1)\Qstar(x_1,x_2)\Qstar(x_2,x_3)\O(c,x_3)\,,\\
&=\sum_{(x_1,x_2,x_3)\in\X^{3}}\E\left[\varphi_{a}(Y_{1})|X_{1}=x_1\right]\P(X_{1}=x_1)\P(X_{2}=x_2|X_{1}=x_1)\\
&\quad\quad\quad\quad\times\P(X_{3}=x_3|X_{2}=x_2)\E\left[\varphi_{c}(Y_{3})|X_{3}=x_3\right]\,,\\
&=\E\left[\varphi_{a}(Y_{1})\varphi_{c}(Y_{3})\right]\,,
\end{align*}
which concludes the proof.
\end{proof}

\begin{lemma}\label{lem:Fundamental}
Let $\U$ be any $(M\times K)$ matrix such that $\Pj\U$ has rank $K$. Then,
\begin{enumerate}[-]
\item 
for all $b\in\{1,\ldots,M\}$,
\[
\B(b)\eqdef(\Pj\U)^{\dag}\M(\ldotp,b,\ldotp)\U=\Rbf\D{\O(b,\ldotp)}\Rbf^{-1}\,,
\]
where $\Rbf^{-1}\eqdef\Qstar\O^{\top}\U$ and $(\Pj\U)^{\dag}\eqdef(\U^{\top}\Pj^{\top}\Pj\U)^{-1}\U^{\top}\Pj^{\top}$ denotes the Moore-Penrose pseudoinverse of the matrix $\Pj\U$ ;
\item
$\U^{\top}\Pj\U$ is invertible and, for all $b\in\{1,\ldots,M\}$,
\[
\B(b)=(\U^{\top}\Pj\U)^{-1}\U^{\top}\M(\ldotp,b,\ldotp)\U=\Rbf\D{\O(b,\ldotp)}\Rbf^{-1}\,.
\]
\end{enumerate}
\end{lemma}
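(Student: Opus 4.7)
The plan is to substitute the closed-form expressions given by Lemma~\ref{lem:ExpressionMetP} into both $\Pj\U$ and $\M(\ldotp,b,\ldotp)\U$, and then exploit the resulting common right factor $\Rbf^{-1}=\Qstar\O^{\top}\U$ to reduce everything to a single cancellation. Concretely, from Lemma~\ref{lem:ExpressionMetP},
\[
\Pj\U=\O\D{\pistar}\Qstar\,\Rbf^{-1},\qquad \M(\ldotp,b,\ldotp)\U=\O\D{\pistar}\Qstar\,\D{\O(b,\ldotp)}\Rbf^{-1}.
\]
So the two matrices whose (pseudo)inverse and product we want to compute share the same left factor $\O\D{\pistar}\Qstar$ and the same right factor $\Rbf^{-1}$.

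Before anything, I would verify that the hypothesis that $\Pj\U$ has rank $K$ forces each relevant factor to be nonsingular. Since $\D{\pistar}$ is invertible by \eqref{eq:pistarmin} and $\Qstar$ is invertible by \HSK-type assumptions, the rank condition forces the $K\times K$ matrix $\Qstar\O^{\top}\U=\Rbf^{-1}$ to be invertible, and in particular $\O$ has full column rank $K$. Thus $\Rbf$ is well-defined and invertible, and the product of invertible matrices $\O\D{\pistar}\Qstar$ (viewed as an $M\times K$ matrix of rank $K$) has pseudoinverse $(\O\D{\pistar}\Qstar)^{\dag}$ that is the left inverse of itself.

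For claim (i), I would use the defining property of the Moore--Penrose pseudoinverse of the full column rank matrix $\Pj\U$, namely $(\Pj\U)^{\dag}(\Pj\U)=\mathrm{Id}_{K}$. Applied to the factorization $\Pj\U=\O\D{\pistar}\Qstar\Rbf^{-1}$ this gives $(\Pj\U)^{\dag}\O\D{\pistar}\Qstar=\Rbf$. Substituting into $\B(b)=(\Pj\U)^{\dag}\M(\ldotp,b,\ldotp)\U=(\Pj\U)^{\dag}\O\D{\pistar}\Qstar\D{\O(b,\ldotp)}\Rbf^{-1}$ and using the identity just established collapses the left part to $\Rbf$, producing $\Rbf\D{\O(b,\ldotp)}\Rbf^{-1}$ as required.

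For claim (ii), first I would write $\U^{\top}\Pj\U=(\O^{\top}\U)^{\top}\D{\pistar}{\Qstar}^{2}(\O^{\top}\U)$ and note that each factor is a nonsingular $K\times K$ matrix (using invertibility of $\O^{\top}\U$ inherited from the rank condition, together with invertibility of $\D{\pistar}$ and $\Qstar$); hence $\U^{\top}\Pj\U$ is invertible. Then I would compute
\[
\U^{\top}\Pj\U=\U^{\top}\O\D{\pistar}\Qstar\,\Rbf^{-1},\qquad \U^{\top}\M(\ldotp,b,\ldotp)\U=\U^{\top}\O\D{\pistar}\Qstar\,\D{\O(b,\ldotp)}\Rbf^{-1},
\]
so that $(\U^{\top}\Pj\U)^{-1}\U^{\top}\M(\ldotp,b,\ldotp)\U=\Rbf(\U^{\top}\O\D{\pistar}\Qstar)^{-1}(\U^{\top}\O\D{\pistar}\Qstar)\D{\O(b,\ldotp)}\Rbf^{-1}=\Rbf\D{\O(b,\ldotp)}\Rbf^{-1}$, which is both the claimed identity with $\B(b)$ and, via the uniqueness of the left inverse, coincides with the expression from (i). There is no real obstacle; the only thing to be careful about is tracking which factors are invertible, since the only hypothesis is the rank of $\Pj\U$, and all other invertibilities must be deduced from it together with {\bf [H\ref{assum:Qstarprime}']} and {\bf [H\ref{assum:linearly:ind}]}.
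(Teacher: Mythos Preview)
Your proposal is correct and follows essentially the same route as the paper. The paper's proof is the one-line observation that $\M(\ldotp,b,\ldotp)\U=\O\D{\pistar}\Qstar\D{\O(b,\ldotp)}\Rbf^{-1}=\Pj\U\,\Rbf\D{\O(b,\ldotp)}\Rbf^{-1}$, after which left-multiplying by $(\Pj\U)^{\dag}$ or by $(\U^{\top}\Pj\U)^{-1}\U^{\top}$ immediately yields both claims; your argument is the same cancellation, spelled out with more care about the invertibility of the intermediate factors.
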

\begin{proof}
Observe that $\M(\ldotp,b,\ldotp)\U=\O\D{\pistar}\Qstar\D{\O(b,\ldotp)}\Rbf^{-1}=\Pj\U\Rbf\D{\O(b,\ldotp)}\Rbf^{-1}$ as claimed.
\end{proof}

\begin{lemma}
\label{lem:Wedinsetal}
Assume that $2\lVert\hatPj-\Pj\lVert<\sigma_{K}(\Pj)$, then:
\begin{enumerate}[(i)]
\item 
\[
\varepsilon_{\Pj}\eqdef\frac{\lVert\hatPj-\Pj\lVert}{\sigma_{K}(\Pj)-\lVert\hatPj-\Pj\lVert}<1\,,
\]
\item %and:
\[
 \sigma_{K}(\hatPj)\geq\Big[\frac{\sigma_{K}(\Pj)-\lVert \hatPj-\Pj\lVert}{\sigma_{K}(\Pj)}\Big]\sigma_{K}(\Pj)>\frac{\sigma_{K}(\Pj)}2\,,
 \]
\item %and:
$
\sigma_{K}(\hat \U^{\top}\U)\geq({1-\varepsilon_{\Pj}^{2}})^{1/2}\,,
$
\item %and:
$
\sigma_{K}(\hat\U^{\top}\Pj\hat \U)\geq({1-\varepsilon_{\Pj}^{2}})\sigma_{K}(\Pj)\,,
$
\item for all $\alpha\in\R^{K}$ and for all $v\in\mathrm{Range}(\Pj)$, $\lVert\hat\U\alpha-v\lVert_{2}^{2}\leq\lVert\alpha-\hat\U^{\top}v\lVert_{2}^{2}+\varepsilon_{\Pj}^{2}\lVert v\lVert_{2}^{2}$,
\item if $3\lVert\hatPj-\Pj\lVert\leq\sigma_{K}(\Pj)$ then:
\[
 \sigma_{K}(\hat\U^{\top}\hatPj\hat\U) \geq\frac{\sigma_{K}(\Pj)}3\,,
 \]
 \item 
 \begin{align*}
 \lVert(\hat\U^{\top}\hatPj\hat\U)^{-1}-(\hat\U^{\top}\Pj\hat\U)^{-1}\lVert&\leq\frac{\lVert\hatPj-\Pj\lVert}{\sigma_{K}(\Pj)({1-\varepsilon_{\Pj}^{2}})(({1-\varepsilon_{\Pj}^{2}})\sigma_{K}(\Pj)-\lVert\hatPj-\Pj\lVert)}\,,\\
 &\leq3.2\frac{\lVert\hatPj-\Pj\lVert}{\sigma_{K}^{2}(\Pj)}\,.
  \end{align*}
\end{enumerate}
\end{lemma}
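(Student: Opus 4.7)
The strategy is to assemble standard singular value perturbation tools on top of one structural identity for $\Pj$. By Lemma~\ref{lem:ExpressionMetP}, $\Pj=\O\D{\pistar}(\Qstar)^{2}\O^{\top}$ has rank $K$, and since $\D{\pistar}(\Qstar)^{2}$ is invertible, both the row space and the column space of $\Pj$ coincide with $\mathrm{range}(\O)$. Writing $\U$ for the top-$K$ right singular vectors of $\Pj$, the matrix $\U\U^{\top}$ is then the orthogonal projector onto $\mathrm{range}(\O)$, giving the identity $\Pj=\U\U^{\top}\Pj=\Pj\U\U^{\top}=\U\U^{\top}\Pj\U\U^{\top}$; this is what will drive the tight form of claim (iv) below.

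Claim (i) is algebra from $2\lVert\hatPj-\Pj\rVert<\sigma_K(\Pj)$. Claim (ii) follows from Weyl's inequality (Theorem~\ref{thm:Weyl}) together with (i). For (iii), I would invoke Wedin's $\sin\Theta$ theorem on the top-$K$ right singular subspaces of $\Pj$ and $\hatPj$: since $\sigma_{K+1}(\Pj)=0$, Weyl gives $\sigma_{K+1}(\hatPj)\leq\lVert\hatPj-\Pj\rVert$, so the relevant eigengap is at least $\sigma_K(\Pj)-\lVert\hatPj-\Pj\rVert$, producing $\lVert\sin\Theta\rVert\leq\varepsilon_{\Pj}$; since the singular values of $\hat\U^{\top}\U$ are the cosines of the principal angles between the two subspaces, $\sigma_K(\hat\U^{\top}\U)=\cos\theta_{\max}\geq\sqrt{1-\varepsilon_{\Pj}^{2}}$.

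For (iv), the structural identity yields the factorization $\hat\U^{\top}\Pj\hat\U=(\hat\U^{\top}\U)(\U^{\top}\Pj\U)(\U^{\top}\hat\U)$, where $\sigma_K(\U^{\top}\Pj\U)=\sigma_K(\Pj)$; applying $\sigma_K(AB)\geq\sigma_K(A)\sigma_K(B)$ for square matrices twice, together with (iii) on each cross factor, gives the quadratic decay $(1-\varepsilon_{\Pj}^{2})\sigma_K(\Pj)$. For (v), I would expand $\lVert\hat\U\alpha-v\rVert_{2}^{2}-\lVert\alpha-\hat\U^{\top}v\rVert_{2}^{2}=\lVert v\rVert_{2}^{2}-\lVert\hat\U^{\top}v\rVert_{2}^{2}$ (using that $\hat\U$ has orthonormal columns), then exploit $v\in\mathrm{range}(\Pj)=\mathrm{range}(\U)$ to write $v=\U\beta$ with $\lVert\beta\rVert_{2}=\lVert v\rVert_{2}$, so that $\lVert\hat\U^{\top}v\rVert_{2}=\lVert(\hat\U^{\top}\U)\beta\rVert_{2}\geq\sqrt{1-\varepsilon_{\Pj}^{2}}\lVert v\rVert_{2}$ by (iii).

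For (vi), I would split $\hat\U^{\top}\hatPj\hat\U=\hat\U^{\top}\Pj\hat\U+\hat\U^{\top}(\hatPj-\Pj)\hat\U$, bound the second summand by $\lVert\hatPj-\Pj\rVert$ (since $\hat\U$ has orthonormal columns), and combine Weyl with (iv): under $3\lVert\hatPj-\Pj\rVert\leq\sigma_K(\Pj)$ one checks $\varepsilon_{\Pj}^{2}\leq 1/4$, leaving at least $(5/12)\sigma_K(\Pj)>\sigma_K(\Pj)/3$. Finally (vii) follows from the inverse perturbation bound (Theorem~\ref{thm:InversePerturbation}) with $A=\hat\U^{\top}\Pj\hat\U$ and $B=\hat\U^{\top}(\hatPj-\Pj)\hat\U$, using $\lVert A^{-1}\rVert\leq 1/[(1-\varepsilon_{\Pj}^{2})\sigma_K(\Pj)]$ from (iv); the constant $3.2=16/5$ in the second inequality comes from bounding the denominator in the regime $\varepsilon_{\Pj}^{2}\leq 1/4$ and $\lVert\hatPj-\Pj\rVert\leq\sigma_K(\Pj)/3$. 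The only nonroutine step is the structural identity $\Pj=\U\U^{\top}\Pj\U\U^{\top}$: because $\Pj$ is non-symmetric but still factors through the same $K$-dimensional subspace on both sides, we recover the quadratic $(1-\varepsilon_{\Pj}^{2})$ factor rather than the weaker $\sqrt{1-\varepsilon_{\Pj}^{2}}$ that a naive one-sided projection argument would give.
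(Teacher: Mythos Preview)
Your proposal is correct and follows essentially the same approach as the paper. The paper's own proof simply cites Lemma~C.1 of \cite{anandkumar2012method} for claims (i)--(v), then derives (vi) from (iv) together with Weyl's inequality and (vii) from (iv) together with Theorem~\ref{thm:InversePerturbation}; you have in effect unpacked the Wedin/Weyl arguments that the cited lemma contains, and your treatment of (vi) and (vii)---including the identification $3.2=16/5$ via $\varepsilon_{\Pj}^{2}\leq 1/4$ and $(1-\varepsilon_{\Pj}^{2})\sigma_{K}(\Pj)-\lVert\hatPj-\Pj\rVert\geq(5/12)\sigma_{K}(\Pj)$---matches the paper's route exactly.
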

\begin{proof}
See Lemma C.1 in \cite{anandkumar2012method} for the first five claims. The sixth claim follows from the fourth point and Theorem \ref{thm:Weyl}. The seventh point follows from the fourth claim and Theorem \ref{thm:InversePerturbation}.
\end{proof}

\subsection*{Control of the observable operator}
Claim (iv) in Lemma \ref{lem:Wedinsetal} and Lemma \ref{lem:Fundamental} ensure that, for all $b\in\{1,\ldots,M\}$,
\[
\tilde\B(b)\eqdef(\hat\U^{\top}\Pj\hat\U)^{-1}\hat\U^{\top}\M(\ldotp,b,\ldotp)\hat\U=\tilde\Rbf\D{\O(b,\ldotp)}\tilde\Rbf^{-1}\,,
\]
where $\Rbf^{-1}$ may be defined as
\[
\tilde\Rbf^{-1}\eqdef\D{(\lVert(\Qstar\O^{\top}\hat\U)^{-1}(\ldotp,1)\lVert_{2},\ldots,\lVert(\Qstar\O^{\top}\hat\U)^{-1}(\ldotp,K)\lVert_{2})}\Qstar\O^{\top}\hat\U\,.
\]
Set $\Lambda\eqdef\Theta^{\top}\hat\U^{\top}\O$ and for all $x\in\X$, $\tilde\C(x)\eqdef\sum_{b=1}^{M}(\hat\U\Theta)(b,x)\tilde\B(b)=\tilde\Rbf\D{\Lambda(x,\ldotp)}\tilde\Rbf^{-1}$. Note that $\tilde\Rbf$ has unit Euclidean norm columns:
\[
\tilde\Rbf=(\Qstar\O^{\top}\hat\U)^{-1}\,\D{(\lVert(\Q{^{\star}}\O^{\top}\hat\U)^{-1}(\ldotp,1)\lVert_{2},\ldots,\lVert({\Qstar}\O^{\top}\hat\U)^{-1}(\ldotp,K)\lVert_{2})}^{-1}\,,
\]
corresponding to unit Euclidean norm eigenvectors of $\tilde\C(k)$.

\begin{lemma}
Assume that $3\lVert\hatPj-\Pj\lVert\leq\sigma_{K}(\Pj)$, then, for all $b\in\{1,\ldots,M\}$,
\[
\lVert\hat\B(b)-\tilde\B(b)\lVert\leq
3.2\frac{\lVert\M(\ldotp,b,\ldotp)\lVert}{\sigma_{K}(\Pj)}
\Big[\frac{\lVert\hatM(\ldotp,b,\ldotp)-\M(\ldotp,b,\ldotp)\lVert}{\lVert\M(\ldotp,b,\ldotp)\lVert}+
\frac{\lVert\hatPj-\Pj\lVert}{\sigma_{K}(\Pj)}\Big]\,,
\]
and for all $x\in\X$,
\[
\lVert{\hat\C}(x)-\tilde\C(x)\lVert\leq
3.2\frac{\lVert\M\lVert_{\infty,2}}{\sigma_{K}(\Pj)}
\Big[\frac{\lVert\hatM-\M\lVert_{\infty,2}}{\lVert\M\lVert_{\infty,2}}+
\frac{\lVert\hatPj-\Pj\lVert}{\sigma_{K}(\Pj)}\Big]\,.
\]
\end{lemma}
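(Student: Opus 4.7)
The plan is to establish the two bounds by a standard ``add-and-subtract'' decomposition of $\hat\B(b)-\tilde\B(b)$, separating the error due to replacing $\M(\cdot,b,\cdot)$ by $\hatM(\cdot,b,\cdot)$ in the numerator from the error due to replacing $\hat\U^{\top}\Pj\hat\U$ by $\hat\U^{\top}\hatPj\hat\U$ in the inverse. Concretely I would write
\[
\hat\B(b)-\tilde\B(b)
=
(\hat\U^{\top}\hatPj\hat\U)^{-1}\hat\U^{\top}\bigl[\hatM(\ldotp,b,\ldotp)-\M(\ldotp,b,\ldotp)\bigr]\hat\U
\;+\;
\bigl[(\hat\U^{\top}\hatPj\hat\U)^{-1}-(\hat\U^{\top}\Pj\hat\U)^{-1}\bigr]\hat\U^{\top}\M(\ldotp,b,\ldotp)\hat\U,
\]
and apply the triangle inequality together with sub-multiplicativity of the operator norm. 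Since $\hat\U$ has orthonormal columns, $\lVert\hat\U\lVert=\lVert\hat\U^{\top}\lVert=1$, so the cofactors of $\hat\U$ and $\hat\U^{\top}$ drop out.

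The two key quantitative inputs come directly from the already-established Lemma~\ref{lem:Wedinsetal}: claim (vi) yields $\lVert(\hat\U^{\top}\hatPj\hat\U)^{-1}\lVert\leq 3/\sigma_{K}(\Pj)\leq 3.2/\sigma_K(\Pj)$, and claim (vii) yields $\lVert(\hat\U^{\top}\hatPj\hat\U)^{-1}-(\hat\U^{\top}\Pj\hat\U)^{-1}\lVert\leq 3.2\lVert\hatPj-\Pj\lVert/\sigma_{K}^{2}(\Pj)$. Plugging these bounds into the two summands and factoring out $\lVert\M(\ldotp,b,\ldotp)\lVert/\sigma_{K}(\Pj)$ delivers exactly the first displayed inequality, with the ratio $\lVert\hatM(\ldotp,b,\ldotp)-\M(\ldotp,b,\ldotp)\lVert/\lVert\M(\ldotp,b,\ldotp)\lVert$ coming from the first term and $\lVert\hatPj-\Pj\lVert/\sigma_{K}(\Pj)$ from the second.

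For the bound on $\hat\C(x)-\tilde\C(x)$, observe that by the definitions at \textbf{[Step 4]} of Algorithm~\ref{alg:Spectral} one has $\hat\C(x)-\tilde\C(x)=\sum_{b=1}^{M}v_{b}\bigl[\hat\B(b)-\tilde\B(b)\bigr]$ with $v\eqdef(\hat\U\Theta)(\ldotp,x)$, and $\lVert v\lVert_{2}=1$ because $\hat\U\Theta$ has orthonormal columns (product of an isometry and a unitary). Linearity in $b$ allows us to perform the very same decomposition at the level of $\hat\C(x)-\tilde\C(x)$, with $\sum_{b}v_{b}\M(\ldotp,b,\ldotp)$ replacing $\M(\ldotp,b,\ldotp)$ and analogously for $\hatM$. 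At this point the definition \eqref{eq:def:infty:2} of $\lVert\cdot\lVert_{\infty,2}$ gives
\[
\Bigl\lVert\sum_{b}v_{b}\M(\ldotp,b,\ldotp)\Bigr\lVert\leq\lVert\M\lVert_{\infty,2},
\qquad
\Bigl\lVert\sum_{b}v_{b}(\hatM-\M)(\ldotp,b,\ldotp)\Bigr\lVert\leq\lVert\hatM-\M\lVert_{\infty,2},
\]
and the second displayed inequality follows by exactly the same arithmetic as for the first.

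The proof is essentially routine once the decomposition is written down; there is no real obstacle. The only point requiring a little care is to verify that the norm chain $\lVert \hat\U^{\top} A\hat\U\lVert\leq\lVert A\lVert$ and the bound $\lVert v\lVert_{2}=1$ allow us to pull everything through without picking up any extra dimension-dependent constant, so that the clean bound with $\lVert\cdot\lVert_{\infty,2}$ on the right-hand side does come out.
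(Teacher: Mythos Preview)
Your proposal is correct and follows essentially the same approach as the paper: the same add-and-subtract decomposition, the same appeal to claims (vi) and (vii) of Lemma~\ref{lem:Wedinsetal}, and the same passage to $\hat\C(x)-\tilde\C(x)$ by replacing $\M(\ldotp,b,\ldotp)$ with the unit-norm combination $\sum_b(\hat\U\Theta)(b,x)\M(\ldotp,b,\ldotp)$. The paper's write-up is slightly terser but the argument is identical.
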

\begin{proof}
Observe that:
\begin{align*}
\lVert\hat\B(b)-\tilde\B(b)\lVert\leq&\lVert(\hat\U^{\top}\hatPj\hat\U)^{-1}\hat\U^{\top}\hatM(\ldotp,b,\ldotp)\hat\U-(\hat\U^{\top}\hatPj\hat\U)^{-1}\hat\U^{\top}\M(\ldotp,b,\ldotp)\hat\U\lVert\\
&+\lVert(\hat\U^{\top}\Pj\hat\U)^{-1}\hat\U^{\top}\M(\ldotp,b,\ldotp)\hat\U-(\hat\U^{\top}\hatPj\hat\U)^{-1}\hat\U^{\top}\M(\ldotp,b,\ldotp)\hat\U\lVert\,,\\
\leq&\lVert\hat\U^{\top}(\hatM(\ldotp,b,\ldotp)-\M(\ldotp,b,\ldotp))\hat\U\lVert\lVert(\hat\U^{\top}\hatPj\hat\U)^{-1}\lVert\\
&+\lVert(\hat\U^{\top}\Pj\hat\U)^{-1}-(\hat\U^{\top}\hatPj\hat\U)^{-1}\lVert\lVert\hat\U^{\top}\M(\ldotp,b,\ldotp)\hat\U\lVert\,,\\
\leq&\lVert\hatM(\ldotp,b,\ldotp)-\M(\ldotp,b,\ldotp)\lVert\sigma^{-1}_{K}(\hat\U^{\top}\hatPj\hat\U)\\
&+\lVert\M(\ldotp,b,\ldotp)\lVert\lVert(\hat\U^{\top}\Pj\hat\U)^{-1}-(\hat\U^{\top}\hatPj\hat\U)^{-1}\lVert\,.
\end{align*}
By claims (vi) and (vii) of Lemma \ref{lem:Wedinsetal}, $3\sigma_{K}(\hat\U^{\top}\hatPj\hat\U)\geq\sigma_{K}(\Pj)$ and $\lVert(\hat\U^{\top}\hatPj\hat\U)^{-1}-(\hat\U^{\top}\Pj\hat\U)^{-1}\lVert\leq3.2\frac{\lVert\hatPj-\Pj\lVert}{\sigma_{K}^{2}(\Pj)}$. Replacing $\M(\ldotp,b,\ldotp)$ by $\sum_{b=1}^{M}(\hat\U\Theta)(b,k)\M(\ldotp,b,\ldotp)$ yields the same result for $\lVert\hat\C(x)-\tilde\C(x)\lVert$.
\end{proof}
\begin{lemma}
Assume that $2\lVert\hatPj-\Pj\lVert<\sigma_{K}(\Pj)$, then,
\begin{enumerate}[(i)]
\item 
\[
\k(\tilde\Rbf)\eqdef\lVert\tilde\Rbf\lVert\lVert\tilde\Rbf^{-1}\lVert\leq\k^{2}(\Qstar\O^{\top}\hat\U)\leq\frac{\k^{2}(\Qstar\O^{\top})}{1-\varepsilon_{\Pj}^{2}}\,,
\]
\item 
\[
\sv_{\C(1)}(\hat\C(1))
\leq\k(\tilde\Rbf)\lVert\hat\C(1)-\tilde\C(1)\lVert
\leq\frac{\k^{2}(\Qstar\O^{\top})}{1-\varepsilon_{\Pj}^{2}}\lVert\hat\C(1)-\tilde\C(1)\lVert\,,
\]
where $\displaystyle\sv_{\C(1)}(\hat\C(1))\eqdef\max_{x_{1}\in\X}\min_{x_{2}\in\X}\left\lvert\hat\lambda(1,{x_{1}})-\lambda(1,{x_{2}})\right\lvert$.
\item  If in addition,
\[
\frac{\k^{2}(\Qstar\O^{\top})}{1-\varepsilon_{\Pj}^{2}}\lVert\hat\C(1)-\tilde\C(1)\lVert
<\min_{x,x'\in\X}\left\lvert\Lambda(1,x)-\Lambda(1,x')\right\lvert/2\eqsp,
\]
 then $\hat\C(1)$ has $K$ distinct real eigenvalues and:
\[
\md(\C(1),\hat\C(1))\leq\frac{\k^{2}(\Qstar\O^{\top})}{1-\varepsilon_{\Pj}^{2}}\lVert\hat\C(1)-\tilde\C(1)\lVert\,,
\]
where $\displaystyle\md(\C(1),\hat\C(1))\eqdef\min_{\tau\in\mathcal S_{K}}\left\{\max_{x\in\X}\left\lvert\hat\Lambda(1,\tau(x))-\Lambda(1,x)\right\lvert\right\}$.
\end{enumerate}
\end{lemma}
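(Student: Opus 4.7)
The three claims are all consequences of the explicit diagonalization $\tilde\C(1)=\tilde\Rbf\D{\Lambda(1,\ldotp)}\tilde\Rbf^{-1}$ combined with Bauer-Fike (Theorem \ref{thm:BauerFike}). The plan is: first, bound the conditioning number of the eigenvector matrix $\tilde\Rbf$ in terms of $\k(\Qstar\O^{\top}\hat\U)$, and then pass from $\hat\U$ to $\U$ using the spectral perturbation bound of claim~(iii) of Lemma~\ref{lem:Wedinsetal}; second, apply Bauer-Fike directly to $\tilde\C(1)$ perturbed by $\hat\C(1)-\tilde\C(1)$; third, check that the extra gap assumption in (iii) ensures that the Bauer-Fike disks are pairwise isolated, so the matching-distance version applies, and deduce that $\hat\C(1)$ has $K$ distinct real eigenvalues.

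\textbf{Claim (i).} Set $B\eqdef\Qstar\O^{\top}\hat\U$ and $D\eqdef\D{(\lVert B^{-1}(\ldotp,1)\lVert_{2},\ldots,\lVert B^{-1}(\ldotp,K)\lVert_{2})}$, so that by definition $\tilde\Rbf=B^{-1}D^{-1}$ and $\tilde\Rbf^{-1}=DB$. Submultiplicativity gives $\k(\tilde\Rbf)\leq\k(B)\,\k(D)$. For $\k(D)$, I would use the two-sided bounds $\lVert D\lVert=\max_{j}\lVert B^{-1}e_{j}\lVert_{2}\leq\lVert B^{-1}\lVert$ and $\lVert D^{-1}\lVert^{-1}=\min_{j}\lVert B^{-1}e_{j}\lVert_{2}\geq\sigma_{K}(B^{-1})=\lVert B\lVert^{-1}$, which together give $\k(D)\leq\k(B)$, hence $\k(\tilde\Rbf)\leq\k(B)^{2}=\k^{2}(\Qstar\O^{\top}\hat\U)$. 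To pass from $\hat\U$ to $\U$, I use that $\Pj=\O\D\pistar{\Qstar}^{2}\O^{\top}$ has range equal to the range of $\O$, so $\U$ is an orthonormal basis of this range and there exists an invertible $T$ with $\O=\U T$ and $\k(\O)=\k(T)$. Substituting gives $\Qstar\O^{\top}\hat\U=\Qstar T^{\top}(\U^{\top}\hat\U)$; the operator-norm bound $\lVert\U^{\top}\hat\U\lVert\leq1$ together with $\sigma_{K}(\U^{\top}\hat\U)\geq({1-\varepsilon_{\Pj}^{2}})^{1/2}$ (claim (iii) of Lemma~\ref{lem:Wedinsetal}) yields $\k(\Qstar\O^{\top}\hat\U)\leq\k(\Qstar\O^{\top})/({1-\varepsilon_{\Pj}^{2}})^{1/2}$.

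\textbf{Claims (ii) and (iii).} Since $\tilde\C(1)=\tilde\Rbf\D{\Lambda(1,\ldotp)}\tilde\Rbf^{-1}$ diagonalizes $\tilde\C(1)$ with spectrum $\{\Lambda(1,x)\}_{x\in\X}$, Bauer-Fike applied with $A=\tilde\C(1)$ and perturbation $\hat\C(1)-\tilde\C(1)$ gives $\sv_{\tilde\C(1)}(\hat\C(1))\leq\k(\tilde\Rbf)\,\lVert\hat\C(1)-\tilde\C(1)\lVert$; combined with (i), this is exactly (ii). For (iii), the same theorem shows that each eigenvalue of $\hat\C(1)$ lies in at least one Bauer-Fike disk $\mathcal D_{x}\eqdef\{\xi: |\xi-\Lambda(1,x)|\leq\k(\tilde\Rbf)\,\lVert\hat\C(1)-\tilde\C(1)\lVert\}$; the standing hypothesis of (iii), together with the bound of (i), forces $2\k(\tilde\Rbf)\lVert\hat\C(1)-\tilde\C(1)\lVert<\min_{x\neq x'}|\Lambda(1,x)-\Lambda(1,x')|$, so the $K$ disks are pairwise disjoint. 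The remark after Theorem \ref{thm:BauerFike} then upgrades the spectral variation bound to the matching-distance bound $\md(\C(1),\hat\C(1))\leq\k(\tilde\Rbf)\lVert\hat\C(1)-\tilde\C(1)\lVert$. Finally, each disk contains exactly one eigenvalue of $\hat\C(1)$ (by the multiplicity statement in Bauer-Fike applied to the diagonalizable matrix $\tilde\C(1)$, whose eigenvalues are simple by the gap assumption); since $\hat\C(1)$ is real, any complex eigenvalue would appear with its conjugate in the same disk (which is symmetric about the real axis as its center $\Lambda(1,x)$ is real), contradicting multiplicity one. Thus all $K$ eigenvalues of $\hat\C(1)$ are real and distinct. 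The only delicate point is the passage from $\k(B)\k(D)$ to $\k(B)^{2}$ via the diagonal rescaling; the rest is a direct application of Bauer-Fike and Weyl-type bounds already recorded in the preliminary lemmas.
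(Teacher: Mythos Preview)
Your proof is correct and follows essentially the same approach as the paper. The paper's proof is extremely terse (it merely points to claim~(iii) of Lemma~\ref{lem:Wedinsetal} for (i), to Bauer--Fike for (ii), and to the remark after Theorem~\ref{thm:BauerFike} for (iii)); you have simply filled in the details behind each of these references, including the column-norm rescaling bound $\k(D)\leq\k(B)$ and the passage $\k(\Qstar\O^{\top}\hat\U)\leq\k(\Qstar\O^{\top})/(1-\varepsilon_{\Pj}^{2})^{1/2}$ via $\O=\U T$, which is precisely what the paper has in mind when it notes that ``$\U$ is an orthonormal basis of range of $\O$''.
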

\begin{proof}
Observe that $\U$ is an orthonormal basis of range of $\O$. The first point follows from claim (iii) of Lemma \ref{lem:Wedinsetal}. The second point is derived from Theorem \ref{thm:BauerFike} and the first point. The remark following Theorem \ref{thm:BauerFike} proves the last point.
\end{proof}

\subsection*{Control of the spectra}
\begin{lemma}
For any $0<\delta<1$,
\[
\P\Big[\forall x, x_{1}\neq x_{2}\,,\ \lvert\Lambda(x,{x_{1}})-\Lambda(x,{x_{2}})\lvert\geq
{\frac
{2\delta({1-\varepsilon_{\Pj}^{2}})^{1/2}}
{\sqrt eK^{5/2}(K-1)}}
{\gamma(\O)}\Big]\geq1-\delta\,.
\]
Furthermore:
\[
\P\Big[\lVert\Lambda\lVert_{\infty}\geq\frac{1+\sqrt{2\log(K^{2}/\delta)}}{\sqrt K}\lVert\O\lVert_{2,\infty}\Big]\leq\delta\,.
\]
\end{lemma}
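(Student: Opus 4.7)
The plan is to view each entry of $\Lambda=\Theta^{\top}\hat{\U}^{\top}\O$ as the inner product
\[
\Lambda(x,x')=\langle\Theta(\cdot,x),\,\hat{\U}^{\top}\O(\cdot,x')\rangle
\]
between a uniformly random unit vector on $S^{K-1}$ (each column of the Haar-distributed $\Theta$) and a deterministic vector, and then to apply standard anti-concentration and concentration estimates on the sphere.

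For the minimum-gap bound I would first rewrite
\[
\Lambda(x,x_{1})-\Lambda(x,x_{2})=\langle\Theta(\cdot,x),\,\hat{\U}^{\top}(\O(\cdot,x_{1})-\O(\cdot,x_{2}))\rangle.
\]
By Lemma~\ref{lem:ExpressionMetP}, $\mathrm{range}(\O)\subseteq\mathrm{range}(\Pj)=\mathrm{range}(\U)$, so $\hat{\U}^{\top}(\O(\cdot,x_{1})-\O(\cdot,x_{2}))=(\hat{\U}^{\top}\U)\,\U^{\top}(\O(\cdot,x_{1})-\O(\cdot,x_{2}))$. Using claim (iii) of Lemma~\ref{lem:Wedinsetal} together with the definition~\eqref{eq:def:gamma} of $\gamma(\O)$, the Euclidean norm of this vector is at least $(1-\varepsilon_{\Pj}^{2})^{1/2}\gamma(\O)$. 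Conditionally on $\hat{\U}$, the projection of $\Theta(\cdot,x)$ onto the corresponding unit direction has the law of the first coordinate of a uniform point on $S^{K-1}$, whose density is bounded by $\Gamma(K/2)/(\sqrt{\pi}\,\Gamma((K-1)/2))$. A Gautschi-type inequality on this Gamma ratio then yields an anti-concentration bound of the form $\P(|\langle g,e\rangle|<t)\le c\sqrt{K}\,t$. A union bound over the at most $K\cdot K(K-1)$ triples $(x,x_{1},x_{2})$ with $x_{1}\neq x_{2}$, with $t$ chosen so the total failure probability is $\delta$, then delivers the first assertion (the $K^{5/2}(K-1)$ in the denominator arising as $K\cdot K(K-1)\cdot\sqrt{K}$).

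For the sup-norm bound the same factorization $\Lambda(x,x')=\lVert\hat{\U}^{\top}\O(\cdot,x')\rVert_{2}\,\langle\Theta(\cdot,x),e_{x'}\rangle$ with a unit $e_{x'}$ applies, and since $\hat{\U}$ has orthonormal columns one has $\lVert\hat{\U}^{\top}\O(\cdot,x')\rVert_{2}\le\lVert\O(\cdot,x')\rVert_{2}\le\lVert\O\rVert_{2,\infty}$. The problem then reduces to a tail bound on $|\langle g,e\rangle|$ for $g$ uniform on $S^{K-1}$. I would invoke the sub-Gaussian-type inequality $\P(\sqrt{K}\,|g_{1}|>1+s)\le\exp(-s^{2}/2)$ (a consequence of Lévy's spherical concentration theorem) with $s=\sqrt{2\log(K^{2}/\delta)}$ to obtain per-event probability at most $\delta/K^{2}$, and close with a union bound over the $K^{2}$ pairs $(x,x')$.

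The main obstacle will be extracting the precise numerical constants: obtaining the factor $2/\sqrt{e}$ in the anti-concentration bound and the explicit form $(1+\sqrt{2\log(K^{2}/\delta)})/\sqrt{K}$ in the concentration bound both require sharp forms of the underlying spherical inequalities (Gautschi's inequality and Lévy-type concentration) rather than their naive Gaussian proxies. The rest of the argument --- identifying the inner-product structure, lower bounding norms via Lemma~\ref{lem:Wedinsetal}(iii), and union bounding --- is essentially routine.
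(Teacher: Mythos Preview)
Your proposal is correct and follows essentially the same approach as the paper's proof: the paper writes the same inner-product decomposition $\Lambda(x,x_{1})-\Lambda(x,x_{2})=\langle\Theta(\cdot,x),\hat\U^{\top}(\O(\cdot,x_{1})-\O(\cdot,x_{2}))\rangle$, invokes claim (iii) of Lemma~\ref{lem:Wedinsetal} to get the lower bound $(1-\varepsilon_{\Pj}^{2})^{1/2}\gamma(\O)$ on the norm, and similarly bounds $\lVert\hat\U^{\top}\O(\cdot,x')\rVert_{2}\le\lVert\O\rVert_{2,\infty}$ for the second part. For the spherical anti-concentration and concentration steps the paper simply cites Lemma~F.2 and Lemma~C.6 of \cite{anandkumar2012method} rather than spelling them out, whereas you propose to reprove them via Gautschi's inequality and L\'evy-type concentration; this is exactly the content of those cited lemmas, so your outline is the same argument with the black box opened.
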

\begin{proof}
Observe that:
\begin{align*}
\Lambda(x,{x_{1}})-\Lambda(x,{x_{2}})&=\langle\Theta(\ldotp,x),(\hat\U^{\top}\O)(\ldotp,x_{1})-(\hat\U^{\top}\O)(\ldotp,x_{2})\rangle\\
&=\langle\Theta(\ldotp,x),\hat\U^{\top}(\O(\ldotp,x_{1})-\O(\ldotp,x_{2}))\rangle\,.
\end{align*}
Furthermore, from (iii) in Lemma \ref{lem:Wedinsetal}, we get that:
\[
\lVert\hat\U^{\top}(\O(\ldotp,x_{1})-\O(\ldotp,x_{2}))\lVert_{2}\geq({1-\varepsilon_{\Pj}^{2}})^{1/2}\lVert\O(\ldotp,x_{1})-\O(\ldotp,x_{2})\lVert_{2}\geq({1-\varepsilon_{\Pj}^{2}})^{1/2}\gamma(\O)\,.
\]
Similarly, note that:
\[
\lVert\Lambda\lVert_{\infty}=\max_{x,x'}\lvert\langle\Theta(\ldotp,x),\hat\U^{\top}\O(\ldotp,x')\rangle\lvert\,,
\]
and $\lVert\hat\U^{\top}\O(\ldotp,x')\lVert_{2}\leq\lVert\O(\ldotp,x')\lVert_{2}\leq\lVert\O\lVert_{2,\infty}$. For sake of readability, we borrow the result of Lemma F.2 and the argument of Lemma C.6 in \cite{anandkumar2012method} to conclude.
\end{proof}

\subsection*{Perturbation of simultaneously diagonalizable matrices}

\begin{lemma}
\label{LemmaHSKfin}
If $3\lVert\hatPj-\Pj\lVert\leq\sigma_{K}(\Pj)$ and:
\begin{align}
\label{eq:condition1}
8.2K^{5/2}(K-1)
\frac{\k^{2}(\Q\O^{\top})}
{\delta\gamma(\O)\sigma_{K}(\Pj)}
\Big[
\lVert\hatM-\M\lVert_{\infty,2}+
\frac{\lVert\M\lVert_{\infty,2}\lVert\hatPj-\Pj\lVert}{\sigma_{K}(\Pj)}
\Big]
&<1\,,
\\
\label{eq:condition2}
43.4K^{4}(K-1)
\frac
{\k^{4}(\Q\O^{\top})}
{\delta\gamma(\O)\sigma_{K}(\Pj)}
\Big[
\lVert\hatM-\M\lVert_{\infty,2}+
\frac{\lVert\M\lVert_{\infty,2}\lVert\hatPj-\Pj\lVert}{\sigma_{K}(\Pj)}
\Big]
&\leq1\,,
\end{align}
and for all $x, x_{1}\neq x_{2}$,
\[
\lvert\Lambda(x,{x_{1}})-\Lambda(x,{x_{2}})\lvert
\geq
{\frac
{\sqrt 3\delta}
{\sqrt eK^{5/2}(K-1)}}
{\gamma(\O)}\,,
\]
and:
\[
\lVert\Lambda\lVert_{\infty}\leq\frac{1+\sqrt{2\log(K^{2}/\delta)}}{\sqrt K}\lVert\O\lVert_{2,\infty}\,,
\]
then there exists $\tau\in\mathcal S_{K}$ such that for all $x\in\X$:
\begin{multline*}
\lVert\Lambda(\ldotp,x)-\hat \Lambda(\ldotp,\tau(x))\lVert_{\infty}\leq
\left[13\frac
{\k^{2}(\Q\O^{\top})}
{\sigma_{K}(\Pj)}+116K^{7/2}(K-1)\left\{1+\left({2\log(K^{2}/\delta)}\right)^{1/2}\right\}\right.
\\\times\left.\frac{\k^{6}(\Q\O^{\top})\lVert\O\lVert_{2,\infty}}{\delta\gamma(\O)\sigma_{K}(\Pj)}\right]\times\left[
\lVert\hatM-\M\lVert_{\infty,2}+
\frac{\lVert\M\lVert_{\infty,2}\lVert\hatPj-\Pj\lVert}{\sigma_{K}(\Pj)}
\right]\,.
\end{multline*}
\end{lemma}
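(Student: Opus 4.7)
The plan is to combine the preceding matrix-perturbation lemmas to control in three stages: (i) the eigenvalues of $\hat\C(1)$, (ii) the eigenvectors $\hat\Rbf$ of $\hat\C(1)$, and (iii) the Rayleigh-type expressions $e_{\tau(x)}^\top \hat\Rbf^{-1}\hat\C(x)\hat\Rbf\, e_{\tau(x)}$ that define $\hat\Lambda(x,\tau(x))$. The starting observation is that the previous lemma already produces, under hypotheses \eqref{eq:condition1}--\eqref{eq:condition2} and the eigengap hypothesis, a permutation $\tau\in\mathcal S_K$ such that $\hat\C(1)$ has $K$ distinct real eigenvalues and
\[
\max_{x\in\X}\bigl|\hat\Lambda(1,\tau(x))-\Lambda(1,x)\bigr|\le\frac{\k^{2}(\Q\O^{\top})}{1-\varepsilon_{\Pj}^{2}}\,\lVert\hat\C(1)-\tilde\C(1)\lVert,
\]
which, via the earlier control on $\lVert\hat\C(1)-\tilde\C(1)\lVert$ and using $1-\varepsilon_{\Pj}^{2}\ge 1/2$ under $3\lVert\hatPj-\Pj\lVert\le\sigma_K(\Pj)$, already accounts for the first term $13\,\k^{2}(\Q\O^{\top})/\sigma_K(\Pj)$ in the claimed bound.

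The heart of the argument is stage (ii). Since $\tilde\C(1)=\tilde\Rbf\D{\Lambda(1,\cdot)}\tilde\Rbf^{-1}$ has simple spectrum with separation at least $\sqrt 3\delta\gamma(\O)/(\sqrt e K^{5/2}(K-1))$ (by the hypothesis), and $\hat\C(1)$ is a small perturbation of $\tilde\C(1)$, a Bauer-Fike-type argument for non-normal matrices yields, up to the sign/phase choice already fixed by normalising columns to unit Euclidean norm, an eigenvector bound of the form
\[
\max_{x\in\X}\lVert \hat\Rbf(\cdot,\tau(x))-\tilde\Rbf(\cdot,x)\lVert_{2}\le C_{1}\,\frac{\k(\tilde\Rbf)}{\min_{x_{1}\ne x_{2}}|\Lambda(1,x_{1})-\Lambda(1,x_{2})|}\,\lVert\hat\C(1)-\tilde\C(1)\lVert,
\]
and the same bound for the rows of $\hat\Rbf^{-1}$ versus $\tilde\Rbf^{-1}$. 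Plugging in $\k(\tilde\Rbf)\le\k^{2}(\Q\O^{\top})/(1-\varepsilon_{\Pj}^{2})$ and the eigengap lower bound produces a factor $\k^{4}(\Q\O^{\top})/(\delta\gamma(\O))$ in the eigenvector error, and hypothesis \eqref{eq:condition2} is precisely what guarantees this quantity is $\le 1/2$, so that $\hat\Rbf$ is invertible and $\lVert\hat\Rbf^{-1}\lVert\le 2\,\k(\tilde\Rbf)$.

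For stage (iii), fix $x\in\X$ and write
\[
\hat\Lambda(x,\tau(x'))-\Lambda(x,x')
= e_{\tau(x')}^{\top}\hat\Rbf^{-1}\bigl(\hat\C(x)-\tilde\C(x)\bigr)\hat\Rbf\,e_{\tau(x')} + e_{\tau(x')}^{\top}\hat\Rbf^{-1}\tilde\C(x)\hat\Rbf\,e_{\tau(x')} - e_{x'}^{\top}\tilde\Rbf^{-1}\tilde\C(x)\tilde\Rbf\,e_{x'}.
\]
The first piece is at most $\lVert\hat\Rbf^{-1}\lVert\,\lVert\hat\Rbf\lVert\,\lVert\hat\C(x)-\tilde\C(x)\lVert\le 2\k(\tilde\Rbf)\lVert\hat\C(x)-\tilde\C(x)\lVert$, which after substituting the earlier control on $\lVert\hat\C(x)-\tilde\C(x)\lVert$ contributes a second summand in $\k^{2}(\Q\O^{\top})/\sigma_K(\Pj)$. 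The second piece is a bilinear perturbation of the exact identity $e_{x'}^{\top}\tilde\Rbf^{-1}\tilde\C(x)\tilde\Rbf\,e_{x'}=\Lambda(x,x')$, which by expanding in both factors is bounded by a constant times $\lVert\tilde\C(x)\lVert\bigl(\lVert\hat\Rbf-\tilde\Rbf\lVert+\lVert\hat\Rbf^{-1}-\tilde\Rbf^{-1}\lVert\bigr)$; using $\lVert\tilde\C(x)\lVert\le\k(\tilde\Rbf)\lVert\Lambda\lVert_{\infty}$ and the high-probability bound $\lVert\Lambda\lVert_{\infty}\le\bigl(1+\sqrt{2\log(K^{2}/\delta)}\bigr)\lVert\O\lVert_{2,\infty}/\sqrt K$ together with the eigenvector estimate of stage (ii) (which already carries a $\k^{4}(\Q\O^{\top})/(\delta\gamma(\O)\sigma_K(\Pj))$), produces the second summand in $\k^{6}(\Q\O^{\top})\lVert\O\lVert_{2,\infty}/(\delta\gamma(\O)\sigma_K(\Pj))$.

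The main obstacle is the non-normal eigenvector perturbation bound in stage (ii): Davis--Kahan does not apply directly because $\tilde\C(1)$ is not symmetric, and one pays an extra factor of $\k(\tilde\Rbf)$ (hence $\k^{2}(\Q\O^{\top})$) on top of the Bauer--Fike factor already paid for eigenvalues, which is precisely why the final exponent on $\k(\Q\O^{\top})$ jumps from $2$ in the first summand to $6$ in the second. Track the numerical constants carefully (the $13$ and $116K^{7/2}(K-1)$ in the statement) by combining the explicit constants $3.2$ of Lemma~\ref{lem:Wedinsetal}, the factor $\sqrt e/\sqrt 3$ from the eigengap hypothesis, and the normalisation $1/\sqrt K$ from the bound on $\lVert\Lambda\lVert_{\infty}$.
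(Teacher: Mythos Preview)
Your proof plan is sound in outline and essentially reconstructs the content of the cited external result, but it differs in presentation from the paper's own proof. The paper does not argue from scratch: it simply observes that $3\lVert\hatPj-\Pj\lVert\le\sigma_K(\Pj)$ forces $\varepsilon_{\Pj}\le 1/2$, and then invokes Claim~4 of Lemma~C.4 in \cite{anandkumar2012method} with the substitutions $\gamma_A\leftarrow \sqrt{3}\delta\gamma(\O)/(\sqrt{e}K^{5/2}(K-1))$, $\kappa(R)\leftarrow 4\k^2(\Q\O^\top)/3$, $\lVert\tilde R\rVert_2^2\leftarrow 4\k^2(\Q\O^\top)/3$, $\epsilon_A\leftarrow 3.2\sigma_K(\Pj)^{-1}\bigl[\lVert\hatM-\M\rVert_{\infty,2}+\lVert\M\rVert_{\infty,2}\lVert\hatPj-\Pj\rVert/\sigma_K(\Pj)\bigr]$, and $\lambda_{\max}\leftarrow (1+\sqrt{2\log(K^2/\delta)})\lVert\O\rVert_{2,\infty}/\sqrt K$, checking that \eqref{eq:condition1} and \eqref{eq:condition2} correspond exactly to the hypotheses $\varepsilon_3<1/2$ and $\varepsilon_4\le 1/2$ of that lemma.

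Your three-stage argument (eigenvalue matching via Bauer--Fike, non-normal eigenvector perturbation paying an extra $\k(\tilde\Rbf)$ over the normal case, then bilinear expansion of the diagonal entries of $\hat\Rbf^{-1}\hat\C(x)\hat\Rbf$) is precisely the skeleton of the proof of that external lemma, so mathematically you are doing the same work one level down. The advantage of your route is that it is self-contained and makes transparent where each power of $\k(\Q\O^\top)$ comes from; the paper's route is much shorter but opaque, delegating the entire perturbation analysis to the reference. One caution on your stage~(ii): the precise exponent of $\k(\tilde\Rbf)$ in the non-normal eigenvector bound, and hence the matching of \eqref{eq:condition2} to the ``$\le 1/2$'' condition, depends on the exact form of the perturbation lemma you use (some formulations give $\k(\tilde\Rbf)$, others $\k(\tilde\Rbf)^2$), so to recover the stated constants $13$ and $116K^{7/2}(K-1)$ you would ultimately need to track the same inequalities as in \cite{anandkumar2012method} rather than a generic Stewart--Sun bound.
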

\begin{proof}
Note $\varepsilon_{\Pj}\leq1/2$. Invoke the last part of Claim 4 of Lemma C.4 in \cite{anandkumar2012method} with $\gamma_{A}\leftarrow{\frac
{\sqrt 3\delta}
{\sqrt eK^{\frac52}(K-1)}}
{\gamma(\O)}$, $\kappa(R)\leftarrow\frac{4\k^{2}(\Q\O^{\top})}{3}$, $\lVert \tilde R\lVert_{2}^{2}\leftarrow\frac{4\k^{2}(\Q\O^{\top})}{3}$, $\epsilon_{A}\leftarrow3.2\frac{\lVert\M\lVert_{\infty,2}}{\sigma_{K}(\Pj)}
\Big[\frac{\lVert\hatM-\M\lVert_{\infty,2}}{\lVert\M\lVert_{\infty,2}}+
\frac{\lVert\hatPj-\Pj\lVert}{\sigma_{K}(\Pj)}\Big]$ and $\lambda_{\max}\leftarrow\frac{1+\sqrt{2\log(K^{2}/\delta)}}{\sqrt K}\lVert\O\lVert_{2,\infty}$. Observe that \eqref{eq:condition1} agrees with $\varepsilon_{3}<1/2$ and \eqref{eq:condition2} agrees with $\varepsilon_{4}\leq1/2$.
\end{proof}

\noindent
Since $\Theta^{\top}$ is an isometry, observe that:
\[
\lVert\hat\U^{\top}\O(\ldotp,x)-\Theta\hat\Lambda(\ldotp,\tau(x))\lVert_{2}=\lVert\Lambda(\ldotp,x)-\hat \Lambda(\ldotp,\tau(x))\lVert_{2}\leq\sqrt K\lVert\Lambda(\ldotp,x)-\hat \Lambda(\ldotp,\tau(x))\lVert_{\infty}\,.
\]
Claim (v) in Lemma \ref{lem:Wedinsetal} (with $\alpha=\Theta\hat\Lambda(\ldotp,\tau(x))$ and $v=\O(\ldotp,x)$) give
\begin{eqnarray*}
\lVert\O(\ldotp,x)-\hatO(\ldotp,\tau(x))\lVert_{2}
&\leq&\lVert\hat\U^{\top}\O(\ldotp,x)-\Theta\hat\Lambda(\ldotp,\tau(x))\lVert_{2}
+
\frac{3\lVert\hatPj-\Pj\lVert}{2\sigma_{K}(\Pj)}\lVert\O(\ldotp,x)\lVert_{2}
\\
&\leq&\sqrt K\lVert\Lambda(\ldotp,x)-\hat \Lambda(\ldotp,\tau(x))\lVert_{\infty}
+
\frac{3\lVert\hatPj-\Pj\lVert}{2\sigma_{K}(\Pj)}\lVert\O(\ldotp,x)\lVert_{2}.
\end{eqnarray*}
Theorem \ref{thm:HSK} follows from Lemma \ref{LemmaHSKfin}.

\end{document}